\newcommand{\balpha}{\boldsymbol{\alpha}}
\newcommand{\cd}[1]{\mathbf{u}(#1)}
\newcommand{\cdj}[1]{\mathbf{u}_j(#1)}
\newcommand{\mcd}[1]{-\mathbf{u}(#1)}
\newcommand{\G}{\mathcal{G}}
\newcommand{\bF}{\mathbf{F}}
\newcommand{\F}{\mathcal{F}}
\newcommand{\bA}{\mathbb{A}}
\newcommand{\A}{\mathcal{A}}
\newcommand{\ZZ}{\mathbb{Z}}
\newcommand{\KK}{\mathbb{K}}
\newcommand{\RR}{\mathbb{R}}
\newcommand{\PP}{\mathbb{P}}
\newcommand{\NN}{\mathbb{N}}
\newcommand{\N}{\mathcal{N}}
\newcommand{\E}{\mathcal{E}}
\newcommand{\cS}{\mathcal{S}}
\newcommand{\CO}{\mathcal{O}}
\newcommand{\mcR}{\mathcal{R}}
\newcommand{\mcL}{\mathcal{L}}
\newcommand{\mcH}{\mathcal{H}}
\newcommand{\mcZ}{\mathcal{Z}}
\newcommand{\pr}{\mathbf{p}}
\newcommand{\mfm}{\mathfrak{m}}
\newcommand{\V}{\mathcal{V}}
\DeclareMathOperator{\Pic}{Pic}
\DeclareMathOperator{\Bl}{Bl}
\DeclareMathOperator{\conv}{conv}
\DeclareMathOperator{\Sym}{Sym}
\DeclareMathOperator{\Proj}{Proj}
\DeclareMathOperator{\spec}{Spec}
\DeclareMathOperator{\Hom}{Hom}
\DeclareMathOperator{\pos}{pos}
\DeclareMathOperator{\codim}{codim}
\DeclareMathOperator{\Div}{div}
\DeclareMathOperator{\Gr}{Gr}
\newtheorem{thm}{Theorem}[section]
\newtheorem{prop}[thm]{Proposition}
\newtheorem{lemma}[thm]{Lemma}
\newtheorem{cor}[thm]{Corollary}
\theoremstyle{definition}
\newtheorem{warning}[thm]{Warning}
\newtheorem{defn}[thm]{Definition}
\newtheorem{rem}[thm]{Remark}
\newtheorem{ex}[thm]{Example}
\title{Fano Schemes of Complete Intersections in Toric Varieties}
\author{Nathan Ilten}
\address{Department of Mathematics, Simon Fraser University,
8888 University Drive, Burnaby BC V5A1S6, Canada}
\email{nilten@sfu.ca}
\author{Tyler L. Kelly}
\address{School of Mathematics, University of Birmingham, Edgbaston, Birmingham B15 2TT, United Kingdom}
\email{t.kelly.1@bham.ac.uk}
\begin{document}

\begin{abstract}
We study Fano schemes $\bF_k(X)$ for complete intersections $X$ in a projective toric variety $Y\subset \PP^n$. Our strategy is to decompose $\bF_k(X)$ into closed subschemes based on the irreducible decomposition of $\bF_k(Y)$ as studied by Ilten and Zotine. We define the ``expected dimension'' for these subschemes, which always gives a lower bound on the actual dimension. Under additional assumptions, we show that  these subschemes are non-empty and smooth of the expected dimension. Using tools from intersection theory, we can apply these results to count the number of linear subspaces in $X$ when the expected dimension of $\bF_k(X)$ is zero. 
\end{abstract}
\maketitle

\section{Introduction}
\subsection{Background}
Let $X\subset \PP^n$ be a projective variety. The $k$th Fano scheme of $X$ is the fine moduli space $\bF_k(X)$ parametrizing $k$-dimensional linear subspaces of $\PP^n$ contained in $X$. The study of $\bF_k(X)$ is classical, going back to Cayley and Salmon \cite{cayley}, who showed that a smooth cubic surface contains exactly $27$ lines, and Schubert \cite{schubert}, who showed that a general quintic threefold has $2875$ lines on it. In the second half of the twentieth century, as a striking application, Clemens and Griffiths used the Fano scheme of lines to prove the irrationality of cubic threefolds \cite{clemensgriffiths}. Fano schemes were then studied extensively when $X$ is a \emph{general} hypersurface or complete intersection, with key results determining dimension, smoothness, and connectedness of $\bF_k(X)$ \cite{altman:77a, barth:81a,langer:97a,debarre:98a}. Some of these results have been partially extended to weaken the assumption of genericity of $X$ to include smooth hypersurfaces, see \cite{harris:98a,roya1,roya2,riedl}.

More recently, Fano schemes $\bF_k(X)$ have been studied for certain \emph{special} varieties $X$, for example, when $X$ is cut out by the $m\times m$ minors  of a generic matrix \cite{chan}. An understanding of the Fano schemes for special varieties often leads to interesting applications. Results on the Fano schemes of the hypersurface cut out by a sum of products of independent linear forms lead to non-trivial lower bounds on product and tensor rank \cite{teitler,pr}. 
The Fano scheme for complete intersections which are general with respect to the property of containing a fixed linear space can be used to obtain identifiability results in machine learning \cite{machine}.
Finally, the study of linear subspaces of projective toric varieties leads to a better understanding of $\A$-discriminants \cite{toric,ito}.

\subsection{Summary of Results}
In this article, we strike a balance between the general and special, and study Fano schemes for general complete intersections $X$ in a fixed projective toric variety $Y$. 
First we recall the toric situation.
Let $\A\subset \ZZ^m$ be a finite collection of lattice points and $Y=Y_\A\subset \PP^n$ be the toric variety parametrized by the monomials corresponding to elements of $\A$; here $n=\#\A-1$. 
The main result of \cite{toric} states that irreducible components $Z_{\pi,k}$ of $\bF_k(Y_\A)$ are in bijection with \emph{maximal Cayley structures} $\pi$ of length at least $k$, see \S\ref{sec:cayley}.

Consider $X\subset Y_\A$ a complete intersection cut out by Cartier divisors of classes $\alpha_1,\ldots,\alpha_r\in\Pic Y_\A$. Then $\bF_k(X)$ is a subscheme of $\bF_k(Y_\A)$, so we may study it by splitting it up into the pieces $V_{\pi,k}$ contained in each irreducible component $Z_{\pi,k}$ of $\bF_k(Y_\A)$.
We combinatorially associate an integer \[\phi=\phi(\A,\pi,\balpha,k)
\] to the data $(\A,\pi,\balpha,k)$ where: 
\begin{itemize}
\item $\A$ is a finite collection of lattice points in $\ZZ^m$, 
\item $\pi$ is a maximal Cayley structure, 
\item $\balpha =(\alpha_1,\ldots,\alpha_r)$ is a collection of Cartier divisor classes, and 
\item $k$ is a natural number which will be the dimension of the planes in our Fano scheme,
\end{itemize}
see \eqref{eqn:expected}.
The integer $\phi$ is called the \emph{expected dimension} of the Fano scheme associated to this data.
 If $V_{\pi,k}$ is non-empty, $\phi$ gives a lower bound on its dimension, and if the divisors cutting out $X$ are basepoint free and sufficiently general, $\dim V_{\pi,k}=\phi$, see Theorem \ref{thm:expected}.

Under more restrictive conditions, we are able to show that if $\phi\geq 0$, $V_{\pi,k}$ is non-empty, see Theorem \ref{thm:empty} and Corollary \ref{cor:empty}. 
Under these conditions, if $X$ is chosen to be sufficiently general, then $V_{\pi,k}$ will be smooth and of the expected dimension (Corollary \ref{cor:xsmooth}).
Along the way, we show that if $Y_\A$ is a nonsingular toric variety, the Fano scheme $\bF_k(Y_\A)$ is also nonsingular (Corollary \ref{cor:smooth}). 

In the special case that $\dim V_{\pi,k}=0$, the smoothness of $V_{\pi,k}$ allows us to use intersection theory to count the number of $k$-planes contained in $X$. Indeed, $V_{\pi,k}$ is the zero locus of a section of a vector bundle $\V$; this bundle $\V$ is a vector bundle on a relative Grassmannian, which is itself defined over a toric variety. The number of $k$-planes is the integral of the top Chern class of $\V$ (Theorem \ref{thm:chern}). This can be calculated explicitly using Schubert calculus and intersection theory on toric varieties.

\begin{ex}[See Example \ref{ex:final} for details]\label{ex:first}
We consider $Y=\PP^2\times \PP^2\subset \PP^{8}$ in its Segre embedding. The set $\A$ is just the product of two standard $2$-dimensional simplices. There are exactly two maximal Cayley structures $\pi_1$ and $\pi_2$, given by the two natural projections. 

Consider a hypersurface $X\subset Y$ cut out by a divisor of multidegree $(3,3)$ and its Fano scheme of lines $\bF_1(X)$. Then the expected dimensions for $V_{\pi_1,1}$ and $V_{\pi_2,1}$ are both zero.
For general $X$ these schemes are non-empty and smooth of the expected dimension, so $X$ contains a finite number of lines.

Each $Z_{\pi_i,1}$ is the Grassmann bundle  $\Gr(2,\E)$, where $\E=\CO_{\PP^2}(-1)^3$.
The degree of $V_{\pi_i,1}$ is the integral of the top Chern class of $\Sym^3 \cS^*$, where $\cS$ is the tautological subbundle on $\Gr(2,\E)$. A calculation with \texttt{Schubert2} \cite{schubert2}
shows that this number is $189$; it follows that the number of lines in $X$ is $378=2\cdot 189$.
The variety $X$ is a Calabi-Yau threefold; both counts of $178$ lines on $X$ are  examples of  Gromov-Witten invariants for $X$.

\end{ex}
\subsection{Organization and Acknowledgements}
We being in \S \ref{sec:prelim} by fixing notation and recalling basics about toric varieties and Cayley structures. After discussing how divisors on toric varieties restrict to linear subspaces, we discuss expected dimension.
Similar to the case of complete intersections in projective space, our approach here is based on viewing the pieces $V_{\pi,k}$ of the Fano scheme as fibers in a projection from a certain incidence scheme.

The hardest work is done in \S \ref{sec:smooth} where we prove that under certain hypotheses, our Fano schemes are non-empty and smooth. We prove this by analyzing the normal bundle of a particular linear space $L$ as we vary the equations of the complete intersection $X$. To that end, we describe the normal 
bundle of $L$ in the ambient toric variety $Y_\A$, and discuss the map taking its sections to sections of the restriction to $L$ of the normal bundle of $X$ in $Y_\A$.

Finally, in \S \ref{sec:count}. we use intersection theory to count the number of $k$-planes contained in $X$. We first describe the universal bundles on irreducible components of the Fano schemes of $Y_\A$. We then show how to realize $V_{\pi,k}$ as a section of a vector bundle, enabling us to use intersection theory to count $k$-planes.
\\
\\
\noindent The first author was partially supported by an NSERC discovery grant. The second author was supported by the Engineering and Physical Sciences Research Council under grants EP/N004922/1 and EP/S03062X/1. This project began during the Fields Institute's Thematic Program on Combinatorial Algebraic Geometry, from which both authors received partial support.

\section{Preliminaries and Expected Dimension}\label{sec:prelim}
\subsection{Toric Varieties and Cayley Structures}\label{sec:cayley}
We will always be working over an algebraically closed field $\KK$ of characteristic zero. The assumption on the characteristic is necessary since we will occasionally be applying Bertini-type results. Fix a lattice $M\cong \ZZ^m$; we denote its dual lattice $\Hom(M,\ZZ)$ by $N$. To a finite subset $\A\subset M$, we associate the projective toric variety
\[Y_\A=\Proj \KK[S_\A]\subset \PP^{\#\A-1}\]
where $S_\A$ is the semigroup generated by elements $(u,1)\in M\times \ZZ$ for $u\in \A$, and $\KK[S_\A]$ is the corresponding semigroup algebra.

Given $v\in \A$, we denote the associated homogeneous coordinate of $Y_\A$ by $x_v$. Likewise, given $u\in M$, the associated rational function on $Y_\A$ is denoted by $\chi^u$.
The variety $Y_\A$ comes equipped with an action of the torus $T=\spec \KK[M]$; the action on the projective coordinate $x_v$ has weight $v$.
We will always assume that $M$ is generated by differences of elements of $\A$, which guarantees that this action is faithful. 

We denote the inner normal fan of $\conv \A$ by $\Sigma$; this is a fan in $N_\RR=N\otimes\RR$. The abstract toric variety $Y_\Sigma$ associated to this fan as in \cite[\S3.1]{cls} is the normalization of $Y_\A$.
For more details on toric varieties see \cite{cls}.

A \emph{face} $\tau$ of $\A$ is the intersection of $\A$ with a face of $\conv \A$, and we write $\tau \prec \A$. Note that we consider $\A$ to be a face of itself. 
There is a natural closed embedding $Y_\tau\subset Y_\A$ determined by the homomorphism $\KK[S_\A]\to \KK[S_\tau]$ which for any $v\in \A$ sends
\begin{equation}\label{eqn:incl}
x_v\mapsto \begin{cases}
	x_v& v\in \tau\\
	0 &v\notin \tau
\end{cases}.
\end{equation}

We recall the main result of \cite[\S3]{toric}.
\begin{defn}[Definition 3.1 of \cite{toric}]\label{defn: Cayley structure}
A \emph{Cayley structure} of length $\ell$ on $\A$ is a surjective map $\pi:\tau\to \Delta_\ell$ preserving affine relations, where $\tau\prec \A$  and $\Delta_\ell$ is the set of standard basis vectors $e_0,\ldots,e_\ell$ in $\ZZ^{\ell+1}$.
\end{defn}
We will identify any two Cayley structures differing only by a permutation of the elements of $\Delta_\ell$.

\begin{rem}
	We emphasize that sets such as $\A$ and $\tau$, as well as $\Delta_{\ell}$ are always \emph{finite sets of lattice points} and should not be confused with their convex hulls.
\end{rem}

Any Cayley structure $\pi:\tau\to \Delta_\ell$ determines an $\ell$-dimensional linear subspace $L_\pi\subset Y_\tau\subset Y_\A$
via the surjective ring homomorphism $\KK[S_\tau]\to \KK[y_0,\ldots,y_\ell]$ sending 
\begin{equation}\label{eqn:pi}
x_v\mapsto y_{\pi(v)}.
\end{equation}
Given a $k$-dimensional linear space $L\subset \PP^n$, we denote by $[L]$ the corresponding point of $\Gr(k+1,n+1)$.

There is a natural partial order on the set of Cayley structures on $\A$ \cite[\S 3]{toric}. For Cayley structures $\pi:\tau\to\Delta_{\ell}$ and $\pi':\tau'\to\Delta_{\ell'}$, we say $\pi \geq \pi'$ if and only if $\tau'\subseteq \tau$, and there is a map $\eta:\Delta_{\ell}\to \Delta_{\ell'}$ such that $\pi'=(\eta\circ\pi)_{|\tau'}$. 
Of special importance are Cayley structures that are \emph{maximal} with respect to this partial order. These maximal Cayley structures correspond to irreducible components of toric Fano schemes.

\begin{thm}[{\cite[Theorem 3.4]{toric}}]\label{thm:toric}
	There is a bijection between irreducible components of $\bF_k(Y_\A)$ and maximal Cayley structures $\pi$ of length $\ell\geq k$. Considered with reduced structure, the irreducible component $Z_{\pi,k}$ corresponding to $\pi$ consists of the $T$-orbits of $[L]\in\Gr(k+1,n+1)$, where $L$ is any $k$-dimensional linear space contained in $L_{\pi'}$, and $\pi'$ ranges over all Cayley structures of length at least $k$ with $\pi'\leq \pi$.  
\end{thm}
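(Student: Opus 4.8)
The plan is to exploit the torus action and reduce the theorem to a set‑theoretic classification of the $k$-planes contained in $Y_\A$, and then to read off the components and their closures from the combinatorics of Cayley structures. I would first dispose of the easy inclusion. A Cayley structure $\pi\colon\tau\to\Delta_\ell$ makes $L_\pi$ a genuine $\ell$-plane inside $Y_\A$: the map \eqref{eqn:pi} sends each coordinate $x_v$ to a single variable $y_{\pi(v)}$, so $L_\pi$ is linear, and the hypothesis that $\pi$ preserves affine relations is exactly what makes every binomial of the toric ideal $I_\A$ restrict to zero on $L_\pi$, whence $L_\pi\subseteq Y_\tau\subseteq Y_\A$. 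Since $Y_\A$ is $T$-invariant, every $T$-translate of every $k$-plane $L\subseteq L_{\pi'}$ with $\pi'\leq\pi$ of length $\geq k$ then lies in $Y_\A$, which already shows that the set described in the statement is contained in $\bF_k(Y_\A)$.

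The crux is the reverse \emph{classification}: every $k$-plane $L\subseteq Y_\A$ is a $T$-translate of a $k$-plane lying in some $L_\pi$. I would argue as follows. Write $R$ for the homogeneous coordinate ring of $L\cong\PP^k$, a polynomial ring in $k+1$ variables and hence a UFD, and let $f_v:=x_v|_L\in R_1$ be the restricted linear forms. Set $\tau:=\{v\in\A : f_v\neq 0\}$; since the support of any point of $Y_\A$ is a face, $\tau\prec\A$. Containment $L\subseteq Y_\A$ means precisely that the substitution $x_v\mapsto f_v$ kills $I_\A$, i.e.\ $\prod_i f_{v_i}^{a_i}=\prod_i f_{v_i}^{b_i}$ for every affine relation $\sum_i a_iv_i=\sum_i b_iv_i$, $\sum_i a_i=\sum_i b_i$, among points of $\tau$. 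Let $g_0,\dots,g_\ell$ be representatives of the distinct classes of the nonzero $f_v$ in $\PP(R_1)$, so $f_v=\lambda_v g_{\sigma(v)}$ with $\lambda_v\in\KK^\times$, and define $\pi\colon\tau\to\Delta_\ell$ by $\pi(v)=e_{\sigma(v)}$. Comparing the two sides of each relation in the UFD $R$ and matching the prime factors $g_j$ forces $\sum_i a_i\pi(v_i)=\sum_i b_i\pi(v_i)$, which is exactly the statement that $\pi$ preserves affine relations; thus $\pi$ is a Cayley structure. Matching the remaining scalar factors forces $\prod_i\lambda_{v_i}^{a_i}=\prod_i\lambda_{v_i}^{b_i}$, and this cocycle condition is precisely what allows one to choose $t\in T$ with $\chi^v(t)\lambda_v$ constant on the fibres of $\sigma$; for this $t$ one checks $t\cdot L\subseteq L_\pi$. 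Since $\dim L_\pi=\ell\geq\dim L=k$, enlarging $\pi$ to a maximal Cayley structure completes the classification.

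With the set‑theoretic picture in hand, I would assemble the irreducible components. Writing $\Gr_k(L_\pi)$ for the variety of $k$-planes contained in $L_\pi$ (a copy of $\Gr(k+1,\ell+1)$), each locus $\overline{T\cdot\Gr_k(L_\pi)}$ is irreducible, being the closure of the image of the irreducible variety $T\times\Gr(k+1,\ell+1)$ under the natural map, so the irreducible components of $\bF_k(Y_\A)$ occur among these loci. The remaining combinatorial work is to show that the partial order on Cayley structures governs the closure relations: one checks $\overline{T\cdot\Gr_k(L_{\pi'})}\subseteq\overline{T\cdot\Gr_k(L_{\pi})}$ exactly when $\pi'\leq\pi$, and that the closure $Z_{\pi,k}$ is the union of the $T$-sweeps of $\Gr_k(L_{\pi'})$ over all $\pi'\leq\pi$ of length $\geq k$. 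It follows that the maximal loci are those attached to maximal Cayley structures of length $\geq k$, that these are mutually incomparable, and hence that they are exactly the irreducible components; this yields both the bijection and the description of $Z_{\pi,k}$, taken with reduced structure.

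The main obstacle is the completeness of the classification together with the precise identification of the closures. Extracting the Cayley structure from the restricted forms $f_v$ is clean in the UFD, but one must handle carefully the normalization by $T$ — verifying that the scalar relations genuinely assemble into a character of $M$, using that $M$ is generated by differences of elements of $\A$ — and the reduction to the face $\tau$. Equally delicate is proving that the closure of a single $T$-sweep introduces no limits beyond the sweeps indexed by smaller Cayley structures, and that the scheme structure, though possibly non‑reduced, has exactly the advertised reduced components; this degeneration analysis is where I expect the real difficulty to lie.
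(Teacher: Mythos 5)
First, a point of reference: the paper itself contains no proof of Theorem \ref{thm:toric} --- it is quoted verbatim from \cite[Theorem 3.4]{toric} --- so your proposal has to be measured against the argument in that cited source. Your classification step is, in essence, a faithful reconstruction of the first half of that argument: restrict the coordinates $x_v$ to $L$, observe that $\tau=\{v\in\A : x_v|_L\neq 0\}$ is a face (via the orbit--face correspondence applied to a general point of $L$), group the nonzero restricted linear forms into proportionality classes, and play unique factorization in the coordinate ring of $L\cong\PP^k$ against the binomial generators of the toric ideal to extract both the Cayley structure $\pi$ and the multiplicative compatibility of the scalars $\lambda_v$. That compatibility defines a partial character on the sublattice generated by differences within fibers of $\pi$, and it extends to a point of $T$ because $\KK^*$ is divisible ($\KK$ being algebraically closed); note that this divisibility, not the hypothesis that $M$ is generated by differences of elements of $\A$, is what the extension actually uses. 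Up to these minor repairs, this half is correct.

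The genuine gap is the entire second half. You reduce the theorem to three assertions that you state but never prove: (i) if $\pi'\leq\pi$ then $T\cdot\Gr_k(L_{\pi'})$ lies in $\overline{T\cdot\Gr_k(L_{\pi})}$; (ii) the closure $\overline{T\cdot\Gr_k(L_{\pi})}$ contains nothing beyond the sweeps indexed by $\pi'\leq\pi$; and (iii) the closures attached to distinct maximal Cayley structures are mutually incomparable. These are not routine ``combinatorial work'': they are precisely what makes $Z_{\pi,k}$ a closed set with the stated description and what produces the bijection with \emph{maximal} Cayley structures. From your classification alone one only gets that $\bF_k(Y_\A)$, set-theoretically, is a finite union of the irreducible sets $\overline{T\cdot\Gr_k(L_{\pi})}$, so that the components occur \emph{among} them --- but not which ones are components, nor what each closure contains. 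Item (i) requires constructing explicit one-parameter degenerations carrying translates of planes in $L_\pi$ to planes in $L_{\pi'}$; item (ii) requires controlling limits of arbitrary arcs in the Grassmannian, since a priori a limit of translates of planes in $L_\pi$ could be a $k$-plane contained in no $L_{\pi'}$ at all --- ruling this out is where \cite{toric} does real work, using local charts on the sweeps; and item (iii) needs, for instance, the dimension formula $\dim Z_{\pi,k}=\dim\tau-\ell+(k+1)(\ell-k)$ or an explicit plane lying in one sweep but not the other. Since you explicitly defer all of this (``this degeneration analysis is where I expect the real difficulty to lie''), the proposal establishes the set-theoretic classification of $k$-planes up to the torus action, but not the theorem.
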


\begin{figure}
	\begin{tikzpicture}
		\draw[color=lightgray,thick] (3,3)--(5,3)--(3,5)--(3,3);
		\draw[color=lightgray,thick] (0,0)--(1,0)--(0,1)--(0,0);
		\draw[color=lightgray,thick] (2,.5)--(3,.5)--(2,1.5)--(2,.5);
		\draw[color=lightgray,thick] (-.3,2)--(-.3,3)--(.7,2)--(-.3,2);
		\begin{scope}[shift={(1,0)}]\draw[red] (0,0) -- (2,.5) --(-.3,2) -- (0,0);\end{scope}
		\begin{scope}[shift={(0,0)}]\draw[green] (0,0) -- (2,.5) --(-.3,2) -- (0,0);\end{scope}
		\begin{scope}[shift={(0,1)}]\draw[blue] (0,0) -- (2,.5) --(-.3,2) -- (0,0);\end{scope}
				\foreach \position in {(0,0),(1,0),(0,1),(0,0),(2,.5),(3,.5),(2,1.5),(-.3,2),(-.3,3),(.7,2)}  \draw[fill] \position  circle[radius=.07cm];
\foreach \position in {(3,3),(5,3),(3,5),(3,4),(4,3),(4,4)}  \draw[fill,color=gray] \position  circle[radius=.07cm];
\foreach \position in {(0,0),(2,.5),(-.3,2)} \draw[lightgray,dashed] (3,3) -- \position;
\foreach \position in {(1,0),(3,.5),(.7,2)} \draw[lightgray,dashed] (5,3) -- \position;
\foreach \position in {(0,1),(2,1.5),(-.3,3)} \draw[lightgray,dashed] (3,5) -- \position;
\end{tikzpicture}

	\caption{The set $\A$ for $\Bl_{\PP^2}\PP^5$}\label{fig:A}
\end{figure}

\begin{ex}[$\Bl_{\PP^2}\PP^5$]\label{ex:1}
Consider the set $\A\subset \ZZ^5$ consisting of elements 
$(u_1,\ldots,u_5)$ satisfying $u_1,u_2,u_3\geq 1$, $u_4,u_5\geq 0$, and $\sum u_i\leq 2$. In other words, the elements of $\A$ are the columns of the following matrix:
\[
\left({\begin{array}{ccccccccccccccc}
2&0&0&1&1&0&1&0&0&1&0&0&1&0&0\\       
0&2&0&1&0&1&0&1&0&0&1&0&0&1&0\\       
0&0&2&0&1&1&0&0&1&0&0&1&0&0&1\\       
0&0&0&0&0&0&0&0&0&1&1&1&0&0&0\\
0&0&0&0&0&0&0&0&0&0&0&0&1&1&1
\end{array}}\right).
\]
The corresponding toric variety $Y_\A$ is the blowup of $\PP^5$ in a plane, with embedding in $\PP^{14}$ given by the full linear system of $2H-E$; here $H$ is the hyperplane class on $\PP^5$ and $E$ is the exceptional divisor of the blowup.

There are exactly two maximal Cayley structures on $\A$. The first is the map $\pi_1:\A\to \Delta_3$ induced by identifying the image of the map $u\mapsto (u_1+u_2+u_3,u_4,u_5)$ with a standard simplex.
The second Cayley structure is the map $\pi_2:\tau \to \Delta_2$, where $\tau$ is the facet of $\A$ with $u_1+u_2+u_3=1$, and $\pi_2$ is the projection onto the first three coordinates.
In particular, we see that $\bF_1(Y_\A)$ has two irreducible components: $Z_{\pi_1,1}$ and $Z_{\pi_2,1}$.

A projection of the set $\A$ is pictured in Figure \ref{fig:A}, with vertices and edges drawn. Each vertex of $\tau$ is shaded black. The fibers of $\pi_1$ consist of the four triangles with solid gray edges; the fibers of $\pi_2$ consist of the red, green, and blue triangles. The remaining edges are dashed lines.
\end{ex}

\begin{warning}\label{warning:reducibility}
	The irreducible components $Z_{\pi,k}$  described in Theorem \ref{thm:toric} are always considered as subvarieties of $\bF_k(Y_\A)$, that is, they are taken to be reduced. In general, however, the irreducible components of $\bF_k(Y_\A)$ might be non-reduced; we will denote them by $\widehat{Z}_{\pi,k}$. We will see later in Corollary \ref{cor:smooth} that if $Y_\A$ is smooth, $\bF_k(Y_\A)$ must also be smooth, so $\widehat{Z}_{\pi,k}=Z_{\pi,k}$.
\end{warning}

\begin{rem}
In \cite{hwang}, families of so-called minimal rational curves on a complete toric manifold are shown to be in bijection with certain primitive collections of rays of the corresponding fan. Since lines are minimal rational curves, this provides an alternate approach to understanding the lines contained in a smooth projective toric variety.
\end{rem}

\subsection{Restricting Divisors}\label{sec:restrict}
We now fix the set $\A\subset M$, along with a Cayley structure $\pi:\tau \to \Delta_\ell$. Given a $T$-invariant Cartier divisor $D$ on $Y_\A$, we are interested in understanding how $D$ (and its sections) restrict to linear subspaces of $L_\pi$. 
In order to obtain a well-defined Cartier divisor on $L_\pi$, we need that $L_\pi$ is not contained in the support\footnote{Recall that the support of $D$ consists of the union of those codimension-one subvarieties $Z\subset Y_\A$ for which the local equation of $D$ is not a unit in $\CO_{Y_\A,Z}$} of $D$ \cite[21.4]{ega}.

This is satisfied if the following assumption is met:
\begin{equation*}\label{eqn:dagger}
	Y_\tau \text{ is not contained in the support of }D.\tag{$\dagger$}
\end{equation*}
Indeed, since $D$ is $T$-invariant and by construction $L_\pi$ is not contained in the toric boundary of $Y_\tau$, \eqref{eqn:dagger} implies that $L_\pi$ is not contained in the support of $D$. Assumption \eqref{eqn:dagger} can always be achieved by replacing $D$ with another $T$-invariant divisor that is linearly equivalent to it. We remark that the divisor $D$ (nor its replacement) need not be effective.

Let $M_\tau$ be the sublattice of $M$ spanned by differences of elements of $\tau$.
Likewise, let $M_\ell$ be the sublattice of $\ZZ^{\ell}$ consisting of elements whose coordinate sum is zero. Then $\pi$ induces a surjection of lattices $\pi':M_\tau\to M_\ell$ which sends
\[(v-v')\mapsto\pi(v)-\pi(v'),\qquad v,v'\in\tau.\] 
If $\tau=\A$, then under our assumptions $M_\tau$ is just all of $M$.

Each vertex $v$ of $\A$ corresponds to a torus fixed point of $Y_\A$. 
The $T$-invariant Cartier divisor $D$ is given by a rational function of the form $\chi^{\mcd{v}}$ on a $T$-invariant neighborhood of this fixed point for some $\cd{v}\in M$.\footnote{Equivalently, the sheaf $\CO(D)$ is locally generated by $\chi^{\cd{v}}$.}
The assumption \eqref{eqn:dagger} is equivalent to requiring that for all vertices $v$ of $\tau$, $\cd{v}\in M_\tau$.
Indeed,
local equations for $D$ at $T$-invariant points are always $T$-eigenfunctions, and 
the units in the local ring of $Y_\A$ at the generic point of $Y_\tau$ which are simultaneously $T$-eigenfunctions are 
exactly elements of the form $c\cdot \chi^u$ for $c\in \KK^*$ and $u\in M_\tau$.

\begin{prop}\label{prop:restrict}
	Assume that the $T$-invariant Cartier divisor $D$ on $Y_{\mathcal{A}}$ above satisfies \eqref{eqn:dagger}.
\begin{enumerate}\item The restriction of $D$ to $L_\pi$ is given as follows: for any $e_i\in\Delta_\ell$, the local equation of $D_{|L_\pi}$ at the corresponding torus fixed point is $\chi^{-\pi'(\cd{v})}$ for any $v\in\pi^{-1}(e_i)$.
		\item Given a $T$-invariant section $\chi^u$ of $\CO(D)$ for $u \in M$, its restriction in $\CO_{L_\pi}(D)$ is
$\chi^{\pi'(u)}$ if $u \in M_\tau$, and 
 0 if $u \notin M_\tau$.
\end{enumerate}
\end{prop}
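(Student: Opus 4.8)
The plan is to exploit the explicit parametrization of $L_\pi$ and to factor the restriction through the toric subvariety, as $L_\pi\hookrightarrow Y_\tau\hookrightarrow Y_\A$. By \eqref{eqn:incl} and \eqref{eqn:pi}, the inclusion $\iota\colon L_\pi=\PP^\ell\hookrightarrow Y_\A$ sends $[y_0:\dots:y_\ell]$ to the point with homogeneous coordinates $x_v=y_{\pi(v)}$ for $v\in\tau$ and $x_v=0$ for $v\notin\tau$. Everything then reduces to understanding how $\iota$ pulls back a monomial $\chi^w$: once that is in hand, part (1) comes from applying it to the local equation $\chi^{\mcd v}$ of $D$, and part (2) from applying it to a section written in a local trivialization of $\CO(D)$.

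The core computation is the following. Fix a vertex $v$ of $\tau$ and set $e_i=\pi(v)$; the chart $U_v=\{x_v\ne 0\}$ of $Y_\A$ has coordinate ring generated by the $x_c/x_v=\chi^{c-v}$ with $c\in\A$, and $\iota$ carries the chart $\{y_i\ne 0\}\subset\PP^\ell$ into $U_v$. For $w\in M$ with $\chi^w$ regular on $U_v$, write $\chi^w=\prod_{c\in\A}(x_c/x_v)^{n_c}$ with $n_c\ge 0$. Restricting first to $Y_\tau$ kills every coordinate $x_c$ with $c\notin\tau$, so
\[
\chi^{w}\big|_{Y_\tau}=
\begin{cases}
\chi^{w}, & w\in M_\tau,\\
0, & w\notin M_\tau,
\end{cases}
\]
and restricting the surviving character further to $L_\pi$ via $x_c\mapsto y_{\pi(c)}$ turns $\chi^{c-c'}$ into $y_{\pi(c)}/y_{\pi(c')}=\chi^{\pi(c)-\pi(c')}$, hence $\chi^w\mapsto\chi^{\pi'(w)}$ for $w\in M_\tau$. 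Thus $\iota^*\chi^w=\chi^{\pi'(w)}$ if $w\in M_\tau$ and $\iota^*\chi^w=0$ otherwise.

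For part (1), I first observe that each $e_i$ equals $\pi(v)$ for some vertex $v$ of $\tau$: since $\pi$ preserves affine relations it extends to an affine surjection $\conv\tau\to\conv\Delta_\ell$, and the vertex $e_i$ of the target must be the image of a vertex of $\conv\tau$, which is a vertex of $\tau$ (hence of $\A$). For such a $v$, \eqref{eqn:dagger} gives $\cd v\in M_\tau$, so the local equation $\chi^{\mcd v}$ of $D$ on $U_v$ pulls back under $\iota$ to $\chi^{-\pi'(\cd v)}$, which is therefore a local equation of $D_{|L_\pi}$ at the fixed point $p_i$. That the value $\pi'(\cd v)$ is independent of the chosen vertex is immediate: two $T$-invariant local equations at $p_i$ differ by a unit, and two characters whose quotient is a unit at $p_i$ must have the same exponent.

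For part (2), I trivialize both sheaves near $p_i$: the section $\chi^u$ of $\CO(D)$ is the regular function $\chi^{u-\cd v}$ in the trivialization of $\CO(D)$ by its local generator $\chi^{\cd v}$, while by part (1) the sheaf $\CO_{L_\pi}(D)$ is generated near $p_i$ by $\chi^{\pi'(\cd v)}$. Restricting $\chi^{u-\cd v}$ by the core computation and multiplying by this new generator yields $\chi^{\pi'(u)}$ when $u\in M_\tau$ (using $\cd v\in M_\tau$ and linearity of $\pi'$, so $\pi'(u-\cd v)+\pi'(\cd v)=\pi'(u)$) and $0$ when $u\notin M_\tau$; these local answers patch over the charts $\{y_i\ne 0\}$. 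The one genuinely non-formal step—and where I expect to spend the most care—is the displayed restriction to $Y_\tau$: namely, that a character $\chi^w$ regular on $U_v$ with $w\in M_\tau$ really involves only the coordinates $x_c$ with $c\in\tau$, so does not vanish on $Y_\tau$, whereas $w\notin M_\tau$ forces a factor $x_c$ with $c\notin\tau$. This is exactly where the hypothesis that $\tau$ is a \emph{face} of $\A$ enters, via a linear functional maximized on $\conv\A$ precisely along $\tau$: evaluating it on any nonnegative expression of $w$ shows that $w$ lies in $M_\tau$ if and only if every contributing $c$ lies in $\tau$.
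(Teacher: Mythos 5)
Your proof is correct and takes essentially the same route as the paper: the paper's (much terser) proof also factors the restriction through $Y_\tau$ via the composition $\KK[S_\A]\to \KK[S_\tau]\to \KK[y_0,\ldots,y_\ell]$ and dehomogenizes, pulling back local equations of $D$; your supporting-functional argument for which characters survive restriction to $Y_\tau$, the existence of a vertex of $\tau$ over each $e_i$, and the independence-of-vertex check are exactly the details the paper leaves implicit. One small point to tighten: your core computation is stated for characters $\chi^w$ regular on $U_v$, whereas the local equation $\chi^{-\cd{v}}$ of a possibly non-effective $D$ need not be regular there; since $\cd{v}\in M_\tau$, it is a quotient of two characters from the semigroup generated by $\tau-v$, so your formula extends to it immediately.
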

\begin{proof}
The embedding of $L_\pi$ in $Y_\A$ is given on the level of homogeneous coordinate rings by the composition 
\[
	\KK[S_\A]\to \KK[S_\tau]\to \KK[y_0,\ldots,y_\ell]
\]
where the first map is as in \eqref{eqn:incl} and the second is as in \eqref{eqn:pi}. On the other hand, local equations for $D_{|L_\pi}$ are obtained by 
pulling back local equations for $D$ via the structure map
$\iota^\#:\CO_{Y_\A}\to \iota_*(\CO_{L_\pi})$, where $\iota:L_\pi\to Y_\A$ is the inclusion. The first claim now follows by dehomogenizing the above map of homogeneous coordinate rings.
The second claim follows similarly.
\end{proof}

Recall that the divisor $D$ is basepoint free if and only if, for each vertex $w$ of $\A$, the set 
\[
	\{\cd{v}\ |\ v\in\A\ \text{ a vertex}\}\subset M
\]
is contained in $w+\pos (\A-w)$. Here $\pos$ denotes the positive hull.
In particular, the divisor $D$ can be recovered from the polytope
\[
	P_D:=\conv	\{\cd{v}\ |\ v\in\A\ \text{ a vertex}\},
\]
as each $\cd{v}$ is the unique vertex $w$ of $P_D$ 
for which $\pos (P_D-w)$ is contained in $\pos (\conv \A-v)$. We say then that $D$ is the divisor associated to $P_D$.
See \cite[\S6.1]{cls} for details.\footnote{Loc.~cit.~only covers the normal case, but it is straightforward to check that the necessary claims also apply in our setting.}

\begin{cor}\label{cor:restrict}
Assume that $D$ is basepoint free and satisfies \eqref{eqn:dagger}. Then $D_{|L_\pi}$ is the divisor associated to the polytope \[\conv \pi'(P_D\cap M_\tau).\]
In particular, the degree of $D_{|L_\pi}$ is the unique integer $\delta$ such that  $\conv \pi'(P_D\cap M_\tau)$ is lattice equivalent to $\delta\cdot \conv \Delta_\ell$.
\end{cor}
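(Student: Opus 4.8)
The plan is to identify $D_{|L_\pi}$ by computing its local equations at the torus fixed points of $L_\pi$ using Proposition \ref{prop:restrict}, and then to match these against the local equations of the divisor associated to $\conv \pi'(P_D \cap M_\tau)$. By the characterization of basepoint-free divisors via their polytopes recalled just before the corollary, it suffices to show two things: first, that $D_{|L_\pi}$ is basepoint free, so that it is indeed the divisor associated to some polytope; and second, that this polytope is precisely $\conv \pi'(P_D \cap M_\tau)$.

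First I would recall that the vertices of $L_\pi \cong \PP^\ell$ are indexed by $e_0, \dots, e_\ell \in \Delta_\ell$, and that $M_\ell$ plays the role of the character lattice. By Proposition \ref{prop:restrict}(1), the local equation of $D_{|L_\pi}$ at the fixed point corresponding to $e_i$ is $\chi^{-\pi'(\cd{v})}$ for any $v \in \pi^{-1}(e_i)$. The key point is that the value $\cd{v}$ for a \emph{vertex} $v$ of $\tau$ lies in $M_\tau$ by the reformulation of \eqref{eqn:dagger} given in the text, so $\pi'(\cd{v})$ is defined. I would then verify that the collection $\{\pi'(\cd{v})\}$ of these local data is exactly $\{\pi'(w) \mid w \text{ a vertex of } P_D \cap M_\tau\}$ realizing $\conv \pi'(P_D \cap M_\tau)$ — i.e., that each $\cd{v}$ is among the vertices of $P_D$ lying in $M_\tau$, and conversely that the vertices of the image polytope are attained this way. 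Here $P_D = \conv\{\cd{v} \mid v \text{ vertex of } \A\}$, so I must track which $\cd{v}$ survive restriction to $M_\tau$ and how $\pi'$ collapses them onto vertices of the image.

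The **main obstacle** is checking that $D_{|L_\pi}$ is basepoint free and that the resulting local equations $\chi^{-\pi'(\cd{v})}$ genuinely assemble into the divisor associated to $\conv \pi'(P_D \cap M_\tau)$, rather than some smaller polytope — in other words, controlling the interaction between taking vertices, intersecting with $M_\tau$, and applying the (possibly non-injective) projection $\pi'$. The subtlety is that $\pi'$ may identify several vertices of $P_D \cap M_\tau$, and one must confirm the basepoint-freeness criterion (each $\pi'(\cd{v})$ lies in the appropriate positive hull over $\conv \Delta_\ell$) is inherited from that of $D$ on $Y_\A$; this should follow by applying $\pi'$ to the positive-hull containments defining basepoint-freeness of $D$, using that $\pi'$ is a surjection of lattices and that $\pi$ preserves affine relations, so cones map to cones compatibly.

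For the final sentence, once $D_{|L_\pi}$ is known to be the divisor associated to the polytope $Q := \conv \pi'(P_D \cap M_\tau)$ inside $\PP^\ell = L_\pi$, the degree of a basepoint-free divisor on $\PP^\ell$ equals the normalized volume of its polytope. Since the standard hyperplane class on $\PP^\ell$ corresponds to $\conv \Delta_\ell$ under the identification of $M_\ell$ with the character lattice, a basepoint-free divisor of degree $\delta$ corresponds to a polytope lattice-equivalent to $\delta \cdot \conv \Delta_\ell$. Thus the degree is the unique $\delta$ with $Q$ lattice-equivalent to $\delta \cdot \conv \Delta_\ell$, which is the claimed formula; I would invoke the standard dictionary between degrees and lattice volumes of polytopes (e.g. \cite[\S6.1]{cls}) to close this out.
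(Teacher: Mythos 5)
Your proposal is correct and follows essentially the same route as the paper: apply Proposition \ref{prop:restrict} at the torus fixed points (i.e.\ at the vertices of $\tau$) to obtain the local data of $D_{|L_\pi}$, use basepoint-freeness of the restriction to conclude that it is the divisor associated to the resulting polytope, and read off the degree from the fact that a basepoint-free divisor of degree $\delta$ on $\PP^\ell$ corresponds to a lattice translate of $\delta\cdot \conv \Delta_\ell$. The only notable difference is that the paper gets basepoint-freeness of $D_{|L_\pi}$ immediately from the standard fact that restrictions of basepoint-free divisors are basepoint-free, rather than re-verifying the positive-hull criterion through $\pi'$ as you propose, which shortcuts that part of your argument.
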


\begin{proof}
	The restriction of a basepoint free divisor is basepoint free, so the first claim follows by applying Proposition \ref{prop:restrict} at the vertices of $\tau$. The claim regarding the degree follows by noticing that the divisor on $L_\pi$ associated to $\delta\cdot \Delta_\ell$ has degree $\delta$.
\end{proof}

We say that a Cartier divisor class $\alpha\in\Pic Y_\A$ \emph{restricts surjectively} with respect to $\pi$
if the associated map $\CO(\alpha)\to \CO_{L_{\pi}}(\alpha)$
of sheaves is surjective on global sections. Analogously, we say $\alpha\in\Pic Y_\A$ restricts surjectively with respect to $\tau$ if the associated map $\CO(\alpha) \to \CO_{Y_\tau}(\alpha)$ of sheaves is surjective on global sections.
We may combinatorially check if $\alpha$ restricts surjectively using Proposition \ref{prop:restrict}, since $\alpha$ can be represented by a $T$-invariant divisor class $D$ satisfying \eqref{eqn:dagger} and $T$-invariant sections provide bases for
$H^0(Y_\A,\CO(D))$ and $H^0(L_\pi, \CO_{L_\pi}(D))$.
In particular, we have the following:
\begin{lemma}\label{lemma:surjects}
	Let $\alpha\in\Pic Y_\A$ be basepoint free. Then $\alpha$ restricts surjectively with respect to $\pi$.
\end{lemma}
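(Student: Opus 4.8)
The plan is to reduce the statement to a combinatorial assertion about lattice points and then verify that assertion using basepoint-freeness together with the Cayley structure.

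First I would fix a $T$-invariant Cartier divisor $D$ representing $\alpha$ and satisfying \eqref{eqn:dagger}; since $\alpha$ is basepoint free, so is $D$, and $D$ is the divisor associated to the lattice polytope $P_D$. As recalled before the statement, the monomials $\chi^u$ with $u \in P_D \cap M$ form a basis of $H^0(Y_\A, \CO(D))$, and by Corollary \ref{cor:restrict} the restriction $D_{|L_\pi}$ is the divisor associated to $Q := \conv \pi'(P_D \cap M_\tau)$, so the monomials $\chi^w$ with $w \in Q \cap M_\ell$ form a basis of $H^0(L_\pi, \CO_{L_\pi}(D))$. By Proposition \ref{prop:restrict}(2) the restriction map sends $\chi^u \mapsto \chi^{\pi'(u)}$ for $u \in P_D \cap M_\tau$ and kills the remaining basis elements, so its image is spanned by $\{\chi^{\pi'(u)} : u \in P_D \cap M_\tau\}$. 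Since $\pi'(P_D \cap M_\tau) \subseteq Q \cap M_\ell$ automatically, surjectivity is equivalent to the purely combinatorial statement $\pi'(P_D \cap M_\tau) = Q \cap M_\ell$, i.e. that every lattice point of $Q$ is the image under $\pi'$ of a lattice point of $P_D$ lying in $M_\tau$.

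Next I would record what basepoint-freeness buys on the target side. The restriction of a basepoint free divisor is basepoint free, so $D_{|L_\pi}$ is basepoint free on $L_\pi \cong \PP^\ell$ and $Q$ is lattice equivalent to the dilated unimodular simplex $\delta\cdot\conv\Delta_\ell$ of Corollary \ref{cor:restrict}; in particular each vertex of $Q$ is an image $\pi'(\cd{v})$ for $v$ a vertex of $\tau$, and by \eqref{eqn:dagger} we have $\cd{v}\in P_D\cap M_\tau$, so the vertices of $Q$ already lift. It remains to lift an arbitrary $w \in Q \cap M_\ell$. Writing $w$ as a convex combination of the vertices of $Q$ and lifting those vertices shows that the fiber $\pi'^{-1}(w)$ meets $P_D$; thus it suffices to prove that the slice $P_D \cap \pi'^{-1}(w)$ contains a point of the affine lattice $\pi'^{-1}(w) \cap M_\tau$.

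The heart of the argument, and the step I expect to be the main obstacle, is exactly this lifting across the fibers of $\pi'$. When $\pi'$ is injective (the fibers of $\pi$ are singletons) it is automatic, since $\conv(P_D \cap M_\tau) \subseteq P_D$ forces $\conv(P_D\cap M_\tau)\cap M_\tau = P_D \cap M_\tau$; the content is therefore concentrated in the kernel of $\pi'$, which by construction is spanned by the within-fiber differences of the Cayley structure. I would first reduce to the case $\tau = \A$ (using the standard toric fact that a basepoint free divisor restricts surjectively on sections to the orbit closure $Y_\tau$), and then show that for $w \in M_\ell$ the slice $P_D \cap \pi'^{-1}(w)$ is a lattice polytope in the fiber lattice. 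This is where both hypotheses are essential: basepoint-freeness forces the edges of $P_D$ to be parallel to edges of $\conv \A$, and the affine-relation-preserving property of $\pi$ makes the ``vertical'' edges cross each integral level of $\pi'$ in $M_\tau$-lattice points, so that the vertices of the slice are themselves lattice points. A one-dimensional example—a lattice segment of lattice length one whose projection has lattice length two—shows that some such hypothesis is genuinely needed. Granting that each slice is a nonempty lattice polytope, it contains a lattice point, which completes the lift and hence shows that the restriction map is surjective.
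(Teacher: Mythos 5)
Your strategy is genuinely different from the paper's, and it has two gaps, one internal to your argument and one coming from the paper's level of generality. The internal gap is exactly the step you call the heart of the argument: that each slice $P_D\cap\pi'^{-1}(w)$ is a lattice polytope. The justification you offer only controls points of the slice lying on \emph{edges} of $P_D$: it is true that every ``vertical'' edge of $P_D$ is an integer multiple of a primitive direction $d$ with $\pi'(d)=e_i-e_j$, and hence meets each integral level in points of $M_\tau$. But a vertex of the slice lies in the relative interior of the minimal face $G$ of $P_D$ containing it, and the only constraint on $G$ is that $\pi'$ be injective on it, so $\dim G$ can be as large as $\ell$. Already for $\dim\tau=3$ and $\ell=2$ the fibers of $\pi'$ are lines, and the endpoints of the segment $P_D\cap\pi'^{-1}(w)$ generically lie in the relative interiors of \emph{two-dimensional} faces of $P_D$, about which your edge argument says nothing; as written, your proof is complete only for $\ell=1$. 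The gap is fillable (on such a face $G$ every edge direction maps under $\pi'$ to some $e_i-e_j$, and any collection of differences $e_i-e_j$ generates a \emph{saturated} sublattice of $M_\ell$, which forces the unique preimage of $w$ in the affine span of $G$ to be integral), but that argument is precisely the missing content. The paper avoids slicing entirely: by \cite[Proposition 4.3]{toric} there is a $\pi$-face $\sigma=\{v_0,\ldots,v_\ell\}$ of $\tau$ with $\pi(v_i)=e_i$; since each $\{v_0,v_i\}$ is an edge of $\conv\A$, one gets $\cd{v_i}=\cd{v_0}+\delta(v_i-v_0)$, so that $\conv\{\cd{v_0},\ldots,\cd{v_\ell}\}\subseteq P_D$ is a dilated simplex whose lattice points $\cd{v_0}+\sum_j\lambda_j(v_j-v_0)$, $\lambda_j\in\ZZ_{\geq 0}$, $\sum_j\lambda_j\leq\delta$, map onto \emph{all} lattice points of the target simplex. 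That $\pi$-face is the key device your argument lacks.

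The second gap is your opening identification of $H^0(Y_\A,\CO(D))$ with the span of $\{\chi^u \mid u\in P_D\cap M\}$. This is valid when $Y_\A$ is normal, but the lemma is stated (and used) without any normality hypothesis, and the paper carefully distinguishes $Y_\A$ from its normalization $Y_\Sigma$. For non-normal $Y_\A$, $\chi^u$ is a global section of $\CO(D)$ if and only if $u-\cd{v}$ lies in the \emph{semigroup} generated by $\A-v$ for every vertex $v$, which is strictly stronger than $u\in P_D$. For instance, take $\A=\{(0,0),(2,0),(0,1),(1,1),(3,1)\}$, $\pi$ the projection to the second coordinate, and $D$ the basepoint free divisor with data $\cd{(0,0)}=\cd{(2,0)}=(0,0)$, $\cd{(0,1)}=(0,2)$, $\cd{(3,1)}=(2,2)$: then $(0,1)\in P_D$ is \emph{not} a section, because $(0,1)$ does not lie in the semigroup generated by $\A-(2,0)$; the unique $T$-section restricting to $\chi^1$ is $\chi^{(1,1)}$ (so the lemma still holds there). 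Consequently, producing a lattice point in each slice does not produce a section, and surjectivity does not follow from your combinatorial statement; you would need lifts satisfying the semigroup conditions at every vertex, which the slicing argument gives you no means to arrange. (To be fair, this subtlety is also treated lightly in the paper's own proof---the assertion that the points $\cd{v_0}+\sum_j\lambda_j(v_j-v_0)$ are global sections is exactly of this kind, and $(0,1)$ above is such a point---but your reduction relies on the identification for all of $P_D\cap M$ and for uncontrolled lifts.) For the same reason, the ``standard toric fact'' you invoke to reduce to $\tau=\A$, namely surjectivity of restriction of sections to the orbit closure $Y_\tau$, is standard only for normal toric varieties.
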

\begin{proof}
	There is a face $\sigma$ of $\tau$ for which $\pi_{|\sigma}$ is bijective; this follows from e.g.~\cite[Proposition 4.3]{toric}.
	For any $e_i\in \Delta_\ell$, let $v_i\in \sigma$ be the unique element with $\pi(v_i)=e_i$. 
	Let $D$ be a $T$-invariant divisor representing $\alpha$ which satisfies \eqref{eqn:dagger}. 
	As above, for each vertex $v$ of $\A$, let $\cd{v}\in M$ be such that $\chi^{\mcd{v}}$ is a local equation for $D$. Then in particular, the lattice points $\cd{v_i}$ correspond to global sections of $\CO(D)$. Using Proposition \ref{prop:restrict} and the fact that $D$ is basepoint free, we obtain that the images of $\cd{v_i}$ under $\pi'$ are the vertices of the $\delta$th dilate of a standard simplex for some $\delta\geq 0$. 
	In particular, $D_{|L_\pi}$ corresponds to a simplex $\Delta'$ which is a lattice translate of $\delta\cdot \conv \Delta_{\ell}$.
	
	Lifting back to the polytope $P_D$ corresponding to $D$, we obtain
	that $\cd{v_i}=\delta\cdot (v_i-v_0)+\cd{v_0}$ for any $i$. The convex hull of these $\cd{v_i}$ is again a dilated simplex $P'\subset P_D$; it contains the lattice points $\cd{v_0}+\sum_j\lambda_j (v_i-v_0)$ for $\lambda_j\in\ZZ_{\geq 0}$ and $\sum \lambda_j\leq \delta$. These all correspond to global sections of $\CO(D)$. 
We again use  Proposition \ref{prop:restrict} to understand the image of $H^0(Y_\A,\CO_{Y_\A}(D))$ 
	in  $H^0(L_\pi,\CO_{L_\pi}(D))$.
The lattice points $\Delta'\cap M_\ell$ are surjected under $\pi'$ to the lattice points  $P'\cap M_\tau$ from above, that is, the $\cd{v_0}+\sum_j\lambda_j (v_i-v_0)$.
This shows that the map of global sections is surjective.
\end{proof}

\begin{ex}[$\Bl_{\PP2} \PP^5$]\label{ex:2}
	We continue the example of $Y_\A=\Bl_{\PP^2}\PP^5$ from Example \ref{ex:1}.
A divisor $D$ of class $2H-E$ is very ample, and a corresponding polytope $P_D$ is just the convex hull of $\A$ itself.
By 
Corollary \ref{cor:restrict}, we obtain that
\[
	\deg D_{|L_{\pi_1}}=1;\qquad \deg D_{|L_{\pi_2}}=1.
\]
A divisor of class $H$ is basepoint free, and a corresponding polytope $P_H$ is the convex hull of $0$ with the standard basis vectors $e_1,\ldots,e_5$. 
By 
Corollary \ref{cor:restrict}, we now obtain that
\[
	\deg H_{|L_{\pi_1}}=1;\qquad \deg H_{|L_{\pi_2}}=0.
\]
For the  computation of $\deg H_{|L_{\pi_2}}$, we note that $P_H\cap M_\tau$ consists only of $0$. By linearity, we then have that
\[
	\deg E_{|L_{\pi_1}}=1;\qquad \deg E_{|L_{\pi_2}}=-1.
\]
\end{ex}

\subsection{Expected Dimension}\label{sec:phi}
Fix $\A$ as above with a maximal Cayley structure $\pi:\tau\to \Delta_\ell$.
Let $\balpha=(\alpha_1,\ldots,\alpha_r)\in \Pic (Y_\A)^r$ be an $r$-tuple of effective non-trivial Cartier divisor classes. 
For each $i$, we define the \emph{restriction degree} of $\alpha_i$ as 
\[
	\delta_i=\deg ({\alpha_{i}}_{|L_\pi}).
\]
By choosing $T$-invariant representatives $D_1,\ldots,D_r$ of the $\alpha_i$ whose support does not contain $Y_\tau$, we may compute the $\delta_i$ combinatorially.
In the basepoint free case, we may use Corollary \ref{cor:restrict}. In general, we apply Proposition \ref{prop:restrict}. Indeed, for $v\in \tau$, let $\cd{v}$ be as in \S\ref{sec:restrict} for the divisor $D_i$. The images of these $\cd{v}$ under $\pi'$ form the vertices of a simplex $\Delta$ in $M_{\ell}$. The restriction degree $\delta_i$ is the unique integer such that $\Delta$ is lattice equivalent to $\delta_i\cdot \conv \Delta_\ell$.

\begin{rem}\label{rem:canon}
	Let $D_i$ be a torus invariant divisor of class $\alpha_i$ whose support is disjoint from $Y_\tau$, that is, satisfies \eqref{eqn:dagger}.
	For any $[L]$ in $Z_{\pi,\ell}$, we have a \emph{canonical} isomorphism 
	$\CO_{L}((D_i)_{|L})\to \CO_L(\delta_i)$. 
	For $L=L_\pi$, this is obtained by mapping $\chi^u$ as in Proposition \ref{prop:restrict} to $0$ or $\chi^{\pi'(u)}$. For $L$ in the torus orbit of $L_\pi$, this is obtained by acting on this map by the torus. 
	For $L$ outside of this orbit, we may proceed by replacing $\pi$ by an appropriate Cayley structure defined on a proper face of $\tau$.
	More precisely, let $\hat\tau$ be the minimal face of $\tau$ with $L$ contained in $Y_{\hat\tau}$, and $\hat\pi:\hat\tau\to \Delta_\ell$ the restriction of $\pi$ to $\hat\tau$. Then $L$ is in the torus orbit of $L_{\hat\pi}$, and we may proceed as above.

	In a similar fashion, one obtains a canonical isomorphism for any $L'$ in $Z_{\pi,k}$ ($k\leq \ell$)
by first considering the isomorphism above for some $L$ containing $L'$, and then restricting further.
\end{rem}

Fix $k\leq \ell$. We say that $\balpha$ satisfies \eqref{eqn:ddagger} if:
\begin{equation*}\label{eqn:ddagger}
	\begin{array}{l}
		\text{For all Cayley structures $\pi'\leq \pi$ of length at least $k$ and for all}\\\text{$i=1,\ldots,r$, the class $\alpha_i$ restricts surjectively with respect to $\pi'$.}
\end{array}
	\tag{$\dagger\dagger$}
\end{equation*}
We may check combinatorially using Proposition \ref{prop:restrict} if \eqref{eqn:ddagger} is satisfied; by Lemma \ref{lemma:surjects} it is always satisfied if all $\alpha_i$ are basepoint free.

\begin{defn}
For $X\subset Y_\A$ a complete intersection of type $\balpha$, let $V_{\pi,k}$ be the intersection of $\bF_k(X)$ with $Z_{\pi,k}$. 
Likewise, let $\widehat{V}_{\pi,k}$ be the intersection of $\bF_k(X)$ with $\widehat{Z}_{\pi,k}$ as defined in Warning~\ref{warning:reducibility}.
\end{defn}
Then $\bF_k(X)$ is the union of all $\widehat{V}_{\pi,k}$ as $\pi$ ranges over all maximal Cayley structures.
Furthermore, $V_{\pi,k}$ and $\widehat{V}_{\pi,k}$ agree when one ignores the non-reduced structure. In particular, they have the same dimension.

For $k\in \NN$, we define the \emph{expected dimension} of the configuration consisting of $\A,\pi,\balpha,k$ to be
\begin{equation}\label{eqn:expected}
\phi(\A,\pi,\balpha,k):=\dim \tau -\ell +(k+1)(\ell-k) -\sum_{i=1}^r{{k+\delta_i}\choose{k}}.
\end{equation}
Here we use the convention that ${a\choose b}=0$ if $a<b$; this occurs above if and only if $\delta_i<0$.

\begin{thm}\label{thm:expected}
Let $X\subset Y_\A$ be a complete intersection of type $\balpha$, and set $\phi=\phi(\A,\pi,\balpha,k)$.
\begin{enumerate}
\item If $V_{\pi,k}$ is non-empty, then its dimension is at least $\phi$.
\item\label{i2} If $V_{\pi,k}$ is non-empty, $X$ is sufficiently general, \eqref{eqn:ddagger} is satisfied, and  $\phi\geq 0$, then \[\dim V_{\pi,k}=\phi.\]

\item\label{i3} If $X$ is sufficiently general, \eqref{eqn:ddagger} is satisfied, and $\phi<0$, then $V_{\pi,k}$ empty.

\end{enumerate}
\end{thm}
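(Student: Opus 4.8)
The plan is to treat all three parts through a single incidence scheme and the projection-fibre strategy flagged in the introduction. First I would fix the affine parameter space $W=\bigoplus_{i=1}^r H^0(Y_\A,\CO(\alpha_i))$, so that a tuple $s=(s_1,\dots,s_r)\in W$ cuts out a complete intersection $X_s$ of type $\balpha$, and ``sufficiently general $X$'' will mean $s$ in a dense open $U\subseteq W$ to be specified. Over $Z_{\pi,k}$, which by Theorem~\ref{thm:toric} is an irreducible reduced variety and, from its Grassmann-bundle description, has dimension $\dim\tau-\ell+(k+1)(\ell-k)$ (the leading terms of \eqref{eqn:expected}), I introduce the sheaf $\V=\bigoplus_i\V_i$ with fibre $\bigoplus_i H^0(L,\CO_L(\alpha_i))$ over $[L]$. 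By the canonical isomorphisms $\CO_L((D_i)_{|L})\cong\CO_L(\delta_i)$ of Remark~\ref{rem:canon}, each $L\cong\PP^k$ carries $\CO(\delta_i)$, so $h^0(L,\CO_L(\alpha_i))=\binom{k+\delta_i}{k}$ is constant along $Z_{\pi,k}$ and $\V$ is locally free of rank $\rho:=\sum_i\binom{k+\delta_i}{k}$ by Grauert's theorem. Restriction of sections gives a map $W\otimes\CO_{Z_{\pi,k}}\to\V$, and the incidence scheme $\Phi=\{(s,[L])\mid L\subseteq X_s\}\subseteq W\times Z_{\pi,k}$ is the zero scheme of the associated universal section, with projections $p\colon\Phi\to W$ and $q\colon\Phi\to Z_{\pi,k}$ satisfying $p^{-1}(s)\cong V_{\pi,k}(X_s)$.

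Part (1) then follows pointwise. For any complete intersection $X_s$ with $V_{\pi,k}$ non-empty, the tuple $s$ restricts to a global section of the rank-$\rho$ bundle $\V$ whose zero scheme is exactly $V_{\pi,k}$; trivialising $\V$ locally, this cuts $V_{\pi,k}$ out of $Z_{\pi,k}$ by $\rho$ equations, so Krull's principal ideal theorem bounds the codimension of every non-empty component by $\rho$ and gives $\dim V_{\pi,k}\ge\dim Z_{\pi,k}-\rho=\phi$. I would stress that this argument needs only that $\V$ is a bundle of rank $\rho$, and in particular does not invoke \eqref{eqn:ddagger}; this is why part (1) holds for arbitrary, not merely general, $X$.

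The technical core, and the step I expect to be the main obstacle, is to show that \eqref{eqn:ddagger} makes $W\otimes\CO_{Z_{\pi,k}}\to\V$ surjective at every point of $Z_{\pi,k}$, not merely on the open torus orbit. Given $[L]\in Z_{\pi,k}$, Theorem~\ref{thm:toric} realises $[L]$ as a torus translate of some $[L_0]$ with $L_0\subseteq L_{\pi'}$ for a Cayley structure $\pi'\le\pi$ of length $\ell'\ge k$; by $T$-equivariance it suffices to treat $L_0$. I would factor the restriction $H^0(Y_\A,\CO(\alpha_i))\to H^0(L_0,\CO_{L_0}(\alpha_i))$ through $H^0(L_{\pi'},\CO_{L_{\pi'}}(\alpha_i))$: the first map is surjective by \eqref{eqn:ddagger}, while the second is the restriction $H^0(\PP^{\ell'},\CO(\delta_i))\to H^0(\PP^k,\CO(\delta_i))$ along the linear inclusion $L_0\hookrightarrow L_{\pi'}$, which is surjective. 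Hence the kernel $\mathcal{K}=\ker(W\otimes\CO_{Z_{\pi,k}}\to\V)$ is locally free of rank $\dim W-\rho$, and $\Phi$, being its total space, is irreducible of dimension $\dim Z_{\pi,k}+(\dim W-\rho)=\dim W+\phi$. Treating every $[L]$ uniformly (including those in smaller faces) and justifying the flatness underlying local freeness of $\V$ and $\mathcal{K}$ is where the care lies.

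Finally I would read off parts (2) and (3) from $\dim\Phi=\dim W+\phi$ via the projection $p$. For part (3), $\phi<0$ gives $\dim\Phi<\dim W$, so $p$ is not dominant and $V_{\pi,k}(X_s)=p^{-1}(s)$ is empty for all $s$ outside the proper closed subset $\overline{p(\Phi)}$, i.e.\ for general $X$. For part (2) with $\phi\ge0$ I would split on whether $p$ is dominant: if it is not, a general $X$ has empty $V_{\pi,k}$ and the implication is vacuous; if it is, the theorem on generic fibre dimension yields a dense open $U\subseteq W$ over which every non-empty fibre of $p$ has dimension $\dim\Phi-\dim W=\phi$, so for general $X$ with $V_{\pi,k}$ non-empty we obtain $\dim V_{\pi,k}=\phi$, matching the lower bound of part (1). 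In both cases a suitable dense open $U$ witnesses ``sufficiently general'', completing the argument.
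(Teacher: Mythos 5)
Your proposal is correct, and its skeleton is the same as the paper's: the same incidence correspondence between defining data of $X$ and points of $Z_{\pi,k}$, the same key surjectivity computation (factor the restriction $H^0(Y_\A,\CO(\alpha_i))\to H^0(L,\CO_L(\delta_i))$ through $L_{\pi'}$, with \eqref{eqn:ddagger} handling the first map, surjectivity of $H^0(\PP^{\ell'},\CO(\delta_i))\to H^0(\PP^k,\CO(\delta_i))$ the second, and the torus action reducing arbitrary $[L]$ to this case), and the same derivation of parts (2) and (3) by projecting an irreducible incidence scheme of dimension $\dim W+\phi$ to the parameter space and invoking generic fiber dimension. Where you differ is in the packaging, and the differences are worth noting. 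The paper (Lemma~\ref{lemma:phi}) works over $|\alpha_1|\times\cdots\times|\alpha_r|$ and obtains the dimension and irreducibility of $\Phi$ from pointwise estimates on the fibers of $\pr_2$; you work over the affine section space $W$ and obtain them structurally: $\V$ is locally free by Grauert, \eqref{eqn:ddagger} makes $W\otimes\CO_{Z_{\pi,k}}\to\V$ a surjection of bundles, and $\Phi$ is the total space of the kernel subbundle. This buys two things. First, your part (1) is a genuinely local argument --- Krull's height theorem applied to the zero scheme of the section of the rank-$\rho$ bundle $\V$ determined by $X$ --- which applies directly to \emph{every} complete intersection $X$, whereas the paper deduces part (1) from dimension bounds on fibers of $\pr_1$; your route is arguably the more airtight one for non-general $X$. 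Second, your bundle $\V$ is precisely the bundle $\bigoplus_j\Sym^{\delta_j}\cS^*$ that the paper only introduces in \S\ref{sec:count} (Theorem~\ref{thm:chern}), so your formulation unifies the dimension count with the later Chern-class computation. Two small repairs: the dimension $\dim Z_{\pi,k}=\dim\tau-\ell+(k+1)(\ell-k)$ should be cited from \cite[Proposition 6.1]{toric} rather than from a ``Grassmann-bundle description,'' since the paper establishes that description only for nonsingular $Y_\A$ (Theorem~\ref{thm:chern}), an assumption Theorem~\ref{thm:expected} does not make; and the constancy of the restriction degree $\delta_i$ over all of $Z_{\pi,k}$ (including planes inside the smaller faces $L_{\pi'}$), which underlies both Grauert's theorem and your identification of the second restriction map, should be justified via Remark~\ref{rem:canon} or via flatness of the universal family over the irreducible base. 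Neither point affects the correctness of your argument.
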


Before the proof of this theorem, we introduce notation which will be useful later.
Let 
$\Phi=\Phi(\A,\pi,\balpha,k)$ be the incidence scheme of all tuples \[(D_1,\ldots,D_r,[L])\in |\alpha_1|\times \cdots\times |\alpha_r|\times Z_{\pi,k}\]
such that $L\subset D_1\cap \cdots \cap D_r \subseteq Y_\A$. Here, $|\alpha_i|$ denotes the linear system of all effective divisors of class $\alpha_i$.
The incidence scheme $\Phi$ comes with projections
\begin{equation}\label{eqn:projectionsFromIncidenceScheme}
\xymatrix{
\Phi \ar[d]_{\pr_1} \ar[r]_{\pr_2}&Z_{\pi,k}\\ 
|\alpha_1|\times\cdots\times|\alpha_r|   
}.\end{equation}

\begin{lemma}\label{lemma:phi}
The incidence scheme $\Phi$  has dimension at least
\[
\dim \tau -\ell +(k+1)(\ell-k) +\sum_{i=1}^r\dim |\alpha_i|-{{k+\delta_i}\choose{k}}.
\]
If \eqref{eqn:ddagger} is satisfied, it is an irreducible variety of exactly this dimension. If furthermore \eqref{eqn:ddagger} is satisfied and $Z_{\pi,k}$ is smooth, then so is $\Phi$.
\end{lemma}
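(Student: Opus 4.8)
The plan is to analyze $\Phi$ via the projection $\pr_2$ onto $Z_{\pi,k}$ and compute the dimension of its fibers. Since $Z_{\pi,k}$ is irreducible of dimension $\dim\tau-\ell+(k+1)(\ell-k)$ (this follows from the description in Theorem~\ref{thm:toric}, as $Z_{\pi,k}$ is the union of torus orbits of $[L]$'s inside the $L_{\pi'}$, fibered appropriately over a toric base with Grassmannian fibers), the strategy is to show that every fiber of $\pr_2$ is a product of linear subspaces of the $|\alpha_i|$ whose codimension inside $|\alpha_1|\times\cdots\times|\alpha_r|$ is exactly $\sum_i\binom{k+\delta_i}{k}$. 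Adding up gives the claimed dimension.

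First I would fix a point $[L]\in Z_{\pi,k}$ and examine the fiber $\pr_2^{-1}([L])$. A tuple $(D_1,\ldots,D_r)$ lies in this fiber precisely when $L\subseteq D_i$ for each $i$, i.e.\ when each section cutting out $D_i$ restricts to zero on $L$. Thus the fiber is the product over $i$ of the projectivized kernel of the restriction map on global sections
\[
\rho_i\colon H^0(Y_\A,\CO(\alpha_i))\to H^0(L,\CO_L(\alpha_i)).
\]
Here I would use Remark~\ref{rem:canon} to identify $\CO_L((D_i)_{|L})$ canonically with $\CO_L(\delta_i)$, so that the target of $\rho_i$ is $H^0(L,\CO_L(\delta_i))$, which has dimension $\binom{k+\delta_i}{k}$ (with the convention that this is $0$ when $\delta_i<0$, matching the binomial convention in the paper). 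The kernel of $\rho_i$ has codimension $\dim\mathrm{image}(\rho_i)$ inside $H^0(Y_\A,\CO(\alpha_i))$, and hence its projectivization is a linear subspace of $|\alpha_i|$ of codimension equal to $\mathrm{rank}\,\rho_i$.

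The main point—and the step where assumption~\eqref{eqn:ddagger} enters—is that $\rho_i$ is surjective for every $[L]\in Z_{\pi,k}$, so that $\mathrm{rank}\,\rho_i=\binom{k+\delta_i}{k}$ uniformly. By Theorem~\ref{thm:toric} any such $L$ is contained in some $L_{\pi'}$ with $\pi'\le\pi$ of length at least $k$; since \eqref{eqn:ddagger} gives surjectivity of restriction to $L_{\pi'}$ and restriction factors as $H^0(Y_\A,\CO(\alpha_i))\to H^0(L_{\pi'},\CO(\alpha_i))\to H^0(L,\CO_L(\alpha_i))$ with the second map surjective (restriction of a globally generated line bundle from a linear space to a linear subspace is surjective on sections), the composite $\rho_i$ is surjective as well. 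This shows $\pr_2$ is a fiber bundle whose fibers are all of the constant dimension $\sum_i\bigl(\dim|\alpha_i|-\binom{k+\delta_i}{k}\bigr)$, giving the dimension formula and, since a locally trivial fibration with irreducible base and irreducible fibers over an irreducible base has irreducible total space, the irreducibility of $\Phi$. The unconditional lower bound in the first sentence follows by the same fiber computation without surjectivity: each $\rho_i$ has rank at most $\binom{k+\delta_i}{k}$, so each fiber has dimension at least $\sum_i\bigl(\dim|\alpha_i|-\binom{k+\delta_i}{k}\bigr)$, and by upper semicontinuity of fiber dimension $\dim\Phi$ is at least $\dim Z_{\pi,k}$ plus this quantity.

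Finally, for smoothness, I would upgrade the fiber-bundle structure to an honest vector-bundle (or projective-bundle) picture: the kernels $\ker\rho_i$ fit together, as $[L]$ varies, into the fibers of a vector bundle on $Z_{\pi,k}$—concretely, $\rho_i$ is the map of the trivial bundle $H^0(Y_\A,\CO(\alpha_i))\otimes\CO_{Z_{\pi,k}}$ onto the relative sections bundle $\pr_*\bigl(\text{universal }\CO(\delta_i)\bigr)$, whose constant rank (from surjectivity) makes the kernel a subbundle. Then $\Phi$ is the associated product of projective subbundles over $Z_{\pi,k}$, hence smooth whenever $Z_{\pi,k}$ is smooth. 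I expect the main obstacle to be making this "kernels vary in a subbundle" claim rigorous, i.e.\ verifying that the relative restriction map has locally free image with the expected rank; the key input is precisely the constancy of $\mathrm{rank}\,\rho_i=\binom{k+\delta_i}{k}$ guaranteed by \eqref{eqn:ddagger}, together with the canonical trivialization from Remark~\ref{rem:canon} that exhibits the target as the pushforward of the $\delta_i$-th power of the universal quotient line bundle on the relative Grassmannian.
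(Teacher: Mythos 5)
Your proposal follows essentially the same route as the paper's proof: identify each fiber of $\pr_2$ with a product of projectivized kernels of the restriction maps $\rho_i$, prove surjectivity of $\rho_i$ under \eqref{eqn:ddagger} by factoring through $L_{\pi'}$, and deduce the dimension count, irreducibility, and smoothness from the resulting uniform description of the fibers.

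One step is misstated, however. You claim, citing Theorem \ref{thm:toric}, that every $L$ with $[L]\in Z_{\pi,k}$ is contained in some $L_{\pi'}$ with $\pi'\leq\pi$. The theorem only says that $Z_{\pi,k}$ is the union of the \emph{$T$-orbits} of such $[L]$: a typical point is a torus translate $t\cdot L_0$ with $L_0\subset L_{\pi'}$, and $t\cdot L_{\pi'}$ is generally not of the form $L_{\pi''}$ (e.g.\ for $\PP^1\times\PP^1\subset\PP^3$ the lines $\PP^1\times\{pt\}$ for general $pt$ contain no $L_{\pi''}$). So your surjectivity argument literally applies only to the special representatives $L_0$; to cover all of $Z_{\pi,k}$ one must add, as the paper does, that the restriction maps for $t\cdot L_0$ are obtained from those for $L_0$ by the (equivariant) torus action, so their ranks, and hence the fiber dimensions, are unchanged. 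This is a one-sentence repair, but as written the claim is false. Two smaller points: your mid-proof assertion that $\pr_2$ is a fiber bundle is premature at that stage (you only have constant fiber dimension there), though your final paragraph's kernel-subbundle construction does legitimately establish local triviality and in fact supplies the detail behind the paper's terse smoothness assertion; the paper itself sidesteps local triviality for irreducibility by instead using properness of $\pr_2$ together with the irreducibility and equidimensionality of the fibers over the irreducible base $Z_{\pi,k}$.
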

\begin{proof}
We adapt the argument of \cite[Proposition 6.1]{EH} to this setting. Fix a Cayley structure $\pi'\leq \pi$ of length at least $k$.
For a fixed linear space $L$ contained in $L_{\pi'}$, consider the restriction maps 
\[
	\bigoplus_{i=1}^r H^0(Y_\A,\CO(\alpha_i))\to 
	\bigoplus_{i=1}^r H^0(Y_\A,\CO_{L_{\pi'}}(\delta_i))\to 
	\bigoplus_{i=1}^r H^0(Y_\A,\CO_{L}(\delta_i)) .
\]
The kernel $K$ of this composition has dimension at least
\[
	\sum_{i=1}^r	\dim H^0(Y_\A,\CO(\alpha_i))-\dim 
	 H^0(Y_\A,\CO_{L}(\delta_i))) 
=\sum_{i=1}^r 1+|\alpha_i|-{{k+\delta_i}\choose{k}}.
\]
Equality holds if \eqref{eqn:ddagger} is satisfied, since then first map is surjective, and the second is always surjective, so the composition is as well. 

On the other hand, the fiber $\pr_2^{-1}([L])$ may be identified with the image of the kernel $K$ in $|\alpha_1|\times\cdots\times |\alpha_r|$, so the fiber has dimension at least
\[
\sum_{i=1}^r |\alpha_i|-{{k+\delta_i}\choose{k}}
\]
with equality if \eqref{eqn:ddagger} holds.
For an arbitrary $k$-dimensional linear space $L$ with $[L]\in Z_{\pi,k}$, $L$ is obtained from a subspace of some $L_{\pi'}$ as above after acting by $T$, so the same dimension estimate for $\pr_2^{-1}([L])$ holds.

The dimension of $Z_{\pi,k}$ is 
$\dim \tau -\ell +(k+1)(\ell-k)$ \cite[Proposition 6.1]{toric}. We have seen above that every fiber of $\pr_2$ has dimension at least 
\[
\sum_{i=1}^r |\alpha_i|-{{k+\delta_i}\choose{k}}
\]
so the dimension of $\Phi$ is at least the sum of these two quantities, proving the first claim. If \eqref{eqn:ddagger} holds, then all fibers have dimension equal to the above bound; all fibers are also irreducible, since they are products of projective spaces. Since $Z_{\pi,k}$ is projective, so is $\Phi$, so the morphism $\pr_2$ is proper. The irreducibility of $\Phi$ then follows from the irreduciblity of $Z_{\pi,k}$. Furthermore, $\Phi$ is smooth if the base $Z_{\pi,k}$ is smooth.
\end{proof}
\begin{proof}[Proof of Theorem \ref{thm:expected}]
For a fixed $X\subset Y_\A$ cut out by divisors $D_1,\ldots,D_r$, $D_i\in |\alpha_i|$, the scheme $V_{\pi,k}$ is the fiber $\pr_1^{-1}((D_1,\ldots,D_r))$. Since Lemma \ref{lemma:phi} 
states that $\Phi$ has dimension at least $\phi+\dim |\alpha_1|\times\cdots\times|\alpha_r|$, a general fiber of $\pr_1$ has dimension at least $\phi$. The first claim of the theorem follows.

For the remaining two claims, assume that  $X$ is sufficiently general and \eqref{eqn:ddagger} is satisfied. By the second part of Lemma \ref{lemma:phi}, a general fiber of $\pr_1$ has dimension exactly $\phi+\dim |\alpha_1|\times\cdots\times|\alpha_r|-\dim \pr_1(\Phi)$. If $V_{\pi,k}$ is non-empty, $\pr_1$ is dominant (and thus surjective), and the dimension of $V_{\pi,k}$ is thus $\phi$. We likewise see that if $\phi<0$, $\pr_1$ is not surjective.
\end{proof}

\begin{ex}[$\Bl_{\PP^2}\PP^5$]\label{ex:3}
	We continue the example of $Y_\A=\Bl_{\PP^2}\PP^5$ from Examples \ref{ex:1} and \ref{ex:2}.
Take $\balpha=(\alpha_1)=(8H-3E)$; this class is basepoint free since $H-E$ and $H$ are both basepoint free. For the Cayley structure $\pi_1$, we have $\delta_1=5$, and obtain the expected dimension 
\[\phi(\A,\pi_1,\balpha,1)=5-3+2(3-1)-{1+5\choose1}=0.\]
Likewise, for the Cayley structure $\pi_2$,
we have $\delta_1=3$, and obtain the expected dimension
\[\phi(\A,\pi_2,\balpha,1)=4-2+2(2-1)-{1+3\choose1}=0.\]
Since $\alpha_1$ is basepoint free, we can apply Lemma~\ref{lemma:surjects}  to see that \eqref{eqn:ddagger} is satisfied. By Theorem \ref{thm:expected}, we conclude that, for a general hypersurface $X$ of class $8H-3E$ in $Y_\A=\Bl_{\PP^2}\PP^5$, there are only finitely many lines on $X$.
\end{ex}

\begin{figure}
\begin{tikzpicture}
\draw[fill,lightgray] (1,0) -- (2,0) -- (4,1) -- (3,1) -- (1,0);
				\foreach \position in {(1,0),(0,1),(2,0),(1,1),(0,2)}  \draw[fill] \position  circle[radius=.07cm];
				\foreach \position in {(3,1),(2,2),(4,1),(3,2),(2,3)}  \draw[fill] \position  circle[radius=.07cm];
				\draw (1,0) -- (2,0) -- (1,1) -- (0,2) -- (0,1) -- (1,0);
				\draw (3,1) -- (4,1) -- (3,2) -- (2,3) -- (2,2) -- (3,1);
\draw (1,0) -- (3,1);
\draw (0,1) -- (2,2);
\draw (0,2) -- (2,3);
\draw (2,0) -- (4,1);
\end{tikzpicture}

\caption{The set $\A$ for $\Bl_P\PP^2\times \PP^1$}\label{fig:ddagger}
\end{figure}

\begin{ex}[\eqref{eqn:ddagger} is necessary]
Let $Y_1=\Bl_P\PP^2$ be the blowup of $\PP^2$ in a point, with $E$ the exceptional class and $H$ the pullback of the hyperplane class. Let $Y_2=\PP^q$, $q\geq 1$, with $F$ the hyperplane class. We consider $Y_\A=Y_1\times Y_2$ embedded in projective space via the full linear system of $2H-E+F$.
Here we are abusing notation and using $H,E,F$ to also denote the pullbacks of the respective classes to $Y_1\times Y_2$.

The set $\A\subset \ZZ^2\times \ZZ^{q+1}$ can be taken to be the product of the set \[\{(1,0),(0,1),(2,0),(1,1),(0,2)\}\subset \ZZ^2\] with $\Delta_q$. A maximal Cayley structure $\pi$ of length one is given by projecting the facet $\tau=\{(1,0),(2,0)\}\times \Delta_q$ to the first $\ZZ^2$-component.
The set $\A$ is pictured in Figure \ref{fig:ddagger} in the case $q=1$, with the facet $\tau$ shaded gray.

Take $r=1$. The class $\alpha_1$ of $D=E+F$ restricts to $L_\pi$ with degree $1$, as follows from e.g.~Proposition \ref{prop:restrict}. This can also be seen easily using Theorem~\ref{thm:piface} below since both $E$ and $F$ are prime $T$-invariant divisors. However, $\alpha_1$ does not restrict surjectively with respect to $\pi$, since the image of the space of global sections of $\CO(D)$ under restriction is one-dimensional. In particular, \eqref{eqn:ddagger} is not satisfied.  We will see that item \ref{i2} of Theorem \ref{thm:expected} fails for $q=2$ and item \ref{i3} fails for $q=1$.
Indeed, we first calculate the expected dimension to be 
\[
\phi=1+q-1-2=q-2.
\]
Next, we notice that $V_{\pi,1}$ is always non-empty. Indeed, since $X$ will be the union of $E$ with the product of $\Bl_P\PP^2$ and a hyperplane in $\PP^q$, the latter will always contain a line of $Z_{\pi,1}$. Thus, although $\phi<0$ for $q=1$, $V_{\pi,k}$ is non-empty, showing that \eqref{eqn:ddagger} is necessary for item \ref{i3}.

When $q=2$, we have that $\phi=0$. However, it is easily confirmed that $V_{\pi,1}$ is always one-dimensional, showing that \eqref{eqn:ddagger} is necessary for item \ref{i2}. 
\end{ex}
\section{Non-Emptyness}\label{sec:smooth}
\subsection{Normal Bundles for $L$ in $Y_\A$}
For a fixed Cayley structure $\pi:\tau\to \Delta_\ell$, we call a face $\sigma$ of $\tau$ a $\pi$-\emph{face} if $\pi$ is injective on $\sigma$. By \cite[Proposition 4.3]{toric}, $\pi$-faces $\sigma$ of dimension $k$ are in bijection with the  torus fixed points $[L_\sigma]$ of $Z_{\pi,k}$. The linear space $L_\sigma$ is the orbit closure of $Y_\A$ corresponding to $\sigma$. With the notation from \S\ref{sec:cayley}, this is just the subvariety $Y_\sigma$ of $Y_\A$.

We will now assume that $Y_\A$ is nonsingular. This is equivalent to assuming that for each vertex $v$ of $\A$, a subset of $\A-v$ forms a basis for $M\cong \ZZ^m$ \cite[Remark 7.2]{toric}.
Consider any face $\sigma$ of $\A$. We denote the set of facets in $\A$ containing $\sigma$ by $\F_\sigma$. Since $Y_\A$ is nonsingular, the set $\F_\sigma$ consists of exactly $m-\dim \sigma$ elements. Each such facet $F$ corresponds to a ray of $\Sigma$ and hence to a torus invariant prime divisor $D_F$, see \cite[\S4.1]{cls}. Note that if $\sigma \prec \sigma'$ then it is clear that $\F_{\sigma'} \subset \F_{\sigma}$.
Given a divisor $D$ on $Y_\A$, we denote its class in $\Pic(Y_A)$ by $[D]$. 

\begin{thm}\label{thm:piface}
	Assume that $Y_\A$ is smooth, and fix a maximal Cayley structure $\pi:\tau\to \Delta_\ell$. 
	Fix any $\ell$-dimensional $\pi$-face $\sigma'$ of $\tau$, along with a $k$-dimensional face $\sigma\prec\sigma'$. 
	\begin{enumerate}
		\item\label{claim:1} $L_\sigma$ is the complete intersection in $Y_\A$ of divisors  $D_F$ for $F\in \F_{\sigma}$.
		\item \label{claim:2}For $F\in \F_{\sigma}$, the restriction of $\CO(D_F)$ to $L_\sigma$ satisfies $\CO_{L_\sigma}(D_F)=\CO_{L_\sigma}(s)$ where 
			\begin{align*}
				s<0 &\qquad\textrm{if}\ F\in\F_\tau,\\
				s=0 &\qquad\textrm{if}\ F\in\F_{\sigma'}\setminus\F_\tau, \ or\\
				s=1 &\qquad\textrm{if}\ F\in \F_\sigma\setminus \F_{\sigma'}.
			\end{align*}
	\end{enumerate}
\end{thm}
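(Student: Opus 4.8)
The plan is to prove Claim \ref{claim:1} from the orbit--cone correspondence together with smoothness, and then to reduce Claim \ref{claim:2} to a single degree computation on the $\ell$-plane $L_{\sigma'}$, where $\pi$ restricts to a bijection and the combinatorics is transparent. For Claim \ref{claim:1}, the face $\sigma\prec\A$ corresponds under normal-fan duality to a cone $\bar\sigma\in\Sigma$ whose rays are precisely those attached to the facets $F\in\F_\sigma$; since $\conv\A$ is $m$-dimensional, $\dim\bar\sigma=m-k$. By the orbit--cone correspondence \cite[\S3.2]{cls}, the orbit closure of $\bar\sigma$ is $Y_\sigma=L_\sigma$, and it equals $\bigcap_{F\in\F_\sigma}D_F$. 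Because $Y_\A$ is smooth, $\bar\sigma$ is a smooth cone, so its ray generators extend to a lattice basis of $N$ and, locally on $L_\sigma$, the $D_F$ are distinct coordinate hyperplanes. Hence these $m-k=\codim L_\sigma$ divisors meet transversally and cut out $L_\sigma$ as a complete intersection. In particular the conormal sequence splits and $N_{L_\sigma/Y_\A}=\bigoplus_{F\in\F_\sigma}\CO_{L_\sigma}(D_F)$, the decomposition that will be used later.

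For Claim \ref{claim:2}, each $\CO_{L_\sigma}(D_F)$ is $\CO_{L_\sigma}(s)$ for a unique integer $s$. As $\sigma\prec\sigma'$, there is a \emph{linear} inclusion $L_\sigma=\PP^k\subseteq\PP^\ell=Y_{\sigma'}=L_{\sigma'}$, and restricting $\CO_{L_{\sigma'}}(D_F)=\CO_{\PP^\ell}(s)$ along $L_\sigma$ leaves $s$ unchanged. It therefore suffices to compute $s=\deg\CO_{L_{\sigma'}}(D_F)$; this already explains why the answer depends only on the position of $F$ relative to $\sigma'$ and $\tau$, matching the chain $\F_\tau\subseteq\F_{\sigma'}\subseteq\F_\sigma$ that indexes the three cases. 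On $L_{\sigma'}$ the map $\pi|_{\sigma'}$ is a bijection onto $\Delta_\ell$; writing the vertices of $\sigma'$ as $v_0,\dots,v_\ell$ with $\pi(v_i)=e_i$, smoothness puts the facets of $\A$ through $v_i$ in bijection with the edges of $\A$ at $v_i$, and I would track, for a fixed $F$, the endpoint $w_i$ of the unique edge at $v_i$ that $F$ omits.

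The trichotomy is then read off from where $w_i$ lands. If $F\in\F_\sigma\setminus\F_{\sigma'}$, smoothness forces $F$ to omit exactly one vertex $v_{i_0}$ of $\sigma'$, and the omitted edge at every other $v_i$ is $\{v_i,v_{i_0}\}$; since $\pi$ preserves affine relations and $\pi'$ is integral, the local equation of $D_F|_{L_{\sigma'}}$ at $e_i$ is the primitive monomial $\chi^{e_{i_0}-e_i}$, so $D_F|_{L_{\sigma'}}$ is the reduced hyperplane $\{y_{i_0}=0\}$ and $s=1$. If $F\in\F_{\sigma'}\setminus\F_\tau$, the omitted edge at $v_i$ stays within the fibre $\pi^{-1}(e_i)$, so its direction is killed by $\pi'$ and the local equations glue to a single global monomial; thus $D_F|_{L_{\sigma'}}$ is principal and $s=0$. (Conceptually these are exactly the $\dim\tau-\ell$ directions in which $L_{\sigma'}$ deforms inside $Z_{\pi,\ell}$, whence the trivial summand.) Finally, if $F\in\F_\tau$, the omitted edge leaves $\tau$ altogether, and the degree comes out strictly negative.

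The main obstacle is to make this trichotomy rigorous: one must show that the endpoint $w_i$ of the $F$-omitted edge genuinely falls into the claimed stratum---another vertex of $\sigma'$, the same $\pi$-fibre, or outside $\tau$---and, in the last case, that the degree is strictly negative rather than merely non-positive. I expect to extract this from the combinatorics of Cayley structures in \cite[\S3--4]{toric}, namely the classification of $\pi$-faces and of the facets of $\A$ containing $\tau$, $\sigma'$ and $\sigma$, together with the fact that $\pi$ extends to an affine map on $\mathrm{aff}(\tau)$. A secondary point is that when $F\supseteq\sigma'$ one has $Y_{\sigma'}\subseteq D_F$, so Proposition \ref{prop:restrict} applies only after replacing $D_F$ by a linearly equivalent representative satisfying \eqref{eqn:dagger}; alternatively I would compute $s$ in these cases as the intersection number $D_F\cdot C$ with a torus-invariant line $C\subseteq L_{\sigma'}$ via the wall-relation formula \cite[\S6.3,\S6.4]{cls}, which avoids \eqref{eqn:dagger} entirely.
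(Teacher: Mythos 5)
Your treatment of Claim \ref{claim:1} (orbit--cone correspondence plus smoothness), the reduction of Claim \ref{claim:2} to a degree computation on $L_{\sigma'}$, and the case $F\in\F_\sigma\setminus\F_{\sigma'}$ are all sound, and run parallel to the paper's own argument (the paper computes intersection numbers $D_F.L_E$ against torus-invariant lines rather than gluing restricted local equations, but both are valid; your closing remark about \eqref{eqn:dagger} and intersection numbers is also exactly how the paper handles the facets containing $\sigma'$). The genuine gap is in the two cases you defer to your ``main obstacle'' paragraph: those are not a verification to be extracted from general combinatorics --- they are the entire content of the theorem, and the route you propose cannot deliver them. The assertions that for $F\in\F_{\sigma'}\setminus\F_\tau$ the $F$-omitted edge at \emph{every} vertex $v_i$ of $\sigma'$ ends in $\pi^{-1}(e_i)$, and that for $F\in\F_\tau$ the degree is strictly negative, are \emph{false} for non-maximal Cayley structures, whereas everything you plan to use (the classification of $\pi$-faces, the facets containing $\tau,\sigma',\sigma$, affine extension of $\pi$ to $\mathrm{aff}(\tau)$) holds for arbitrary Cayley structures. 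Concretely, let $\A$ be the prism $\Delta_2\times\Delta_1$, so $Y_\A=\PP^2\times\PP^1$ is smooth; let $\tau$ be the bottom triangular facet and $\pi:\tau\to\Delta_1$ the (non-maximal) Cayley structure collapsing two vertices of $\tau$ to $e_0$ and sending the third to $e_1$. Take $\sigma'=\sigma$ an edge of $\tau$ on which $\pi$ is injective. Then the unique square facet $F$ containing $\sigma'$ lies in $\F_{\sigma'}\setminus\F_\tau$, yet $\CO_{L_{\sigma'}}(D_F)\cong\CO_{L_{\sigma'}}(1)$; and $\tau\in\F_\tau$ gives $\CO_{L_{\sigma'}}(D_\tau)\cong\CO_{L_{\sigma'}}(0)$. (In your language: at one vertex of $\sigma'$ the $F$-omitted edge does \emph{not} stay in the fibre.) So any correct proof of these two cases must use maximality of $\pi$ in an essential way, and your outline never does.

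What is actually needed --- and what the paper does --- is an extension-by-contradiction mechanism. In coordinates where the edges of $\A$ at $v_0=0$ have primitive directions $e_1,\dots,e_\ell$ (into $\sigma'$), $a_1,\dots,a_p$ (into $\tau\setminus\sigma'$), and $b_1,\dots,b_q$ (out of $\tau$), one must prove: (i) $\pi(a_i)=e_0$ for all $i$; (ii) writing the omitted-edge direction at $e_1$ for $F$ with normal $a_i^*$ as $a_i+\lambda_ie_1$, in fact $\lambda_i=0$ (affine linearity plus (i) only give $\lambda_i\in\{0,-1\}$); and (iii) writing it for $F$ with normal $b_i^*$ as $b_i+\mu_ie_1$, in fact $\mu_i>0$ (a priori only $\mu_i\geq -1$). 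Each is proved by showing that its failure lets one build a Cayley structure of length $\ell+1$ dominating $\pi$ --- for instance, for (i), the map $w\mapsto(\pi(w),a_i^*(w))$ --- contradicting maximality; the degrees $s=-\lambda_i$ and $s=-\mu_i$ then come out as claimed. Until your cases two and three are rewritten around this mechanism, the trichotomy is an assertion rather than a proof.
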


\begin{proof}
	Claim \ref{claim:1} follows from standard facts about intersections on toric varieties, see e.g.~\cite[\S12.5]{cls}.
Indeed, the elements of $F_\sigma$ correspond to the rays $\rho$ of $\Sigma$ that are in the inner normal cone $C$ of $\conv(\A)$ at $\conv(\sigma)$. By \cite[Lemma 12.5.2]{cls} and the discussion following it, the intersection of the Cartier divisors corresponding to rays in $C$ is exactly the closure of the torus orbit corresponding to the cone $C$ under the orbit-cone correspondence. But this orbit closure is exactly $L_\sigma$, and the divisors we are intersecting are exactly the $D_F$ for $F\in \F_\sigma$.

	For Claim \ref{claim:2}, 
	 we will fix some coordinates. 
The following assumptions are possible since $Y_\A$ is smooth. We assume that $\sigma$ has vertices $e_0:=0$ and $e_1,\ldots, e_k$, and $\sigma'$ has additionally the vertices $e_{k+1},\ldots,e_\ell$. The primitive generators of the edges of $\A$ starting at $0$ are $e_1,\ldots,e_\ell$ (for edges in $\sigma'$), $a_1,\ldots, a_p$ (for edges in $\tau\setminus \sigma'$), and $b_1,\ldots,b_q$ (for edges in $\A\setminus \tau$). Furthermore, we can assume that \[e_1,\ldots,e_{\ell},a_1,\ldots,a_p,b_1,\ldots,b_q\] is a basis for $M$.

We first suppose that $F\in \F_\sigma\setminus \F_{\sigma'}$. Then there is a vertex of $\sigma'$, say $e_\ell$, not contained in $F$. We consider the edge $E=\{0,e_\ell\}$ of $\A$; this corresponds to a torus invariant line $L_E\subset L_{\sigma'}$. The intersection number of $L_E$ with $D_F$ is one, since $Y_\A$ is smooth, and $F\cap E=\{0\}$. But $L_E$ is rationally equivalent to any line in $L_\sigma$, so we conclude that the degree of $\CO_{L_\sigma}(D_F)$ is also one.

For the remaining two cases, we fix the edge $E=\{0,e_1\}\subset \sigma$ and consider the intersection number $D_F.L_E$, where $L_E$ is the torus invariant line corresponding to $E$. This intersection number will again be the degree of $\CO_{L_\sigma}(D_F)$.

We have already recorded what the primitive generators of edges of $\A$ from $0$ are. We now do the same for edges from $e_1$.
Clearly, $-e_1,e_2-e_1,\ldots,e_\ell-e_1$ are primitive generators for the edges in $\sigma'$.
By considering the face of $\conv \A$ containing $0,e_1,a_i$ or $0,e_1,b_i$ we see that the remaining edges from $e_1$ have primitive generators
\begin{align*}
a_i'&=a_i+\lambda_ie_1\qquad i=1,\ldots,p\\
b_i'&=b_i+\mu_i e_1\qquad i=1,\ldots,q
\end{align*}
for integers $\lambda_i,\mu_i\geq -1$.
The inner normal vectors for the facets of $\A$ intersecting $E$ are exactly
\[
e_i^*\ (i\neq 1),\qquad a_i^*,\qquad b_i^*
\]
for the facets containing $E$, and 
\[
e_1^*,\qquad \sum \lambda_i a_i^*+\sum \mu_ib_i^*-\sum e_i^*
\]
for the two facets intersecting $E$ properly. Here, $e_i^*,a_i^*,b_i^*$ are elements of the dual basis. 

If $F\in\F_{\sigma'}\setminus\F_\tau$, the inner normal of $F$ is $v=a_i^*$ for some $i$. Likewise, if $F\in \F_{\tau}$, then its inner normal is $v=b_i^*$ for some $i$.
In both cases, to calculate $D_F.L_E$, we shift $D_F$ by the principal divisor $-\Div \chi^{v^*}$. Using  
\cite[Proposition 4.1.2]{cls} we have that  
\[
\Div \chi^{v^*}=\begin{cases}
	D_F+\lambda_i D_{F'}+R & v=a_i^*\\
	D_F+\mu_i D_{F'}+R & v=b_i^*
\end{cases}
\]
where $F'$ is a facet intersecting $E$ properly, and $R$ is a sum of prime invariant divisors corresponding to facets that do not intersect $E$. The divisor $D_F-\Div \chi^{v^*}$ intersects $L_E$ properly, and we obtain
\[
D_F.L_E=(D_F-\Div \chi^{v^*}).L_E=\begin{cases}
-\lambda_i&v=a_i^*\\
-\mu_i&v=b_i^*\\
\end{cases}.
\]
Hence, it remains to show that $\lambda_i=0$ and $\mu_i>0$ for all $i$.

For this, we use the Cayley structure $\pi$. We identify the vertices of $\Delta_\ell$ with those of $\sigma'$. Suppose that $\pi(a_i)\neq e_0$.  Then since $\pi$ is affine linear, $\conv \tau \cap [a_i^*=1]=a_i$. But then $\pi$ can be extended to a Cayley structure of length $\ell+1$ by $w\mapsto (\pi(w),a_i^*(w))$ and identifying the image with $\Delta_{\ell+1}$. This contradicts the maximality of $\pi$, so we may assume that $\pi(a_i)=e_0$ for all $i$. Therefore $\pi$ may be interpreted as the projection of $\tau$ to the subspace spanned by $e_1,\ldots,e_{\ell}$.

 Again by affine linearity of $\pi$, we then have that $\lambda_i\leq 0$. Note that every element $w$ of $\tau$ satisfies
\[
\sum \lambda_i a_i^*(w)-\sum e_i^*(w)\geq -1
\]
along with $a_i^*(w),e_i^*(w)\geq 0$. If $\lambda_i=-1$, then we could
 project $\tau$ onto $e_0,\ldots,e_\ell,a_i$, but then $\pi$ could be extended to a Cayley structure of length $\ell+1$, violating maximality.
We conclude that $\lambda_i=0$ for all $i$.

To show that $\mu_i>0$, consider $\A'=\A\cap \langle \tau,b_i\rangle$.  
Using that all $\lambda_j=0$, every element $w$ of $\A'$ must satisfy 
\[
\sum e_j^*(w)\leq  1+\mu_ib_i^*(w).
\]
If $\mu_i\leq 0$, we can project $\A'$ onto $e_0,\ldots,e_\ell$, and $\pi$ is not maximal. Hence, we conclude that $\mu_i>0$ as desired.
\end{proof}

Continuing with notation as in the above theorem, we see that the normal bundle of $L=L_\sigma$ in $Y_\A$ is given by
\begin{equation}\label{eqn:n1}
	\N_{L/Y_\A}\cong \bigoplus_{F\in \F_{\sigma}}\CO_L(D_F)\cong \CO_L(1)^{\oplus (\ell-k)}\oplus \CO_L(0)^{\oplus (\dim \tau-\ell)}\oplus \mcR,
\end{equation}
where $\mcR$ is a direct sum of $m-\dim \tau$ line bundles of negative degree.
Indeed, this follows from \cite[Proposition-Definition 6.15]{EH} and the above theorem, since the number of facets containing $\sigma$ but not $\sigma'$ is $\ell-k$, and the number of facets containing $\sigma'$ but not $\tau$ is $\dim \tau-\ell$. 

In \cite[Corollary 7.4]{toric} it is shown that if the singular locus of $Y_\A$ has dimension less than $k$, then each component $Z_{\pi,k}$ of $\bF_k(Y_\A)$, taken in its reduced structure, is nonsingular. This result does not rule out these components being non-reduced, or possibly intersecting. However, the above theorem allows us to say more if  we assume that  $Y_\A$ is nonsingular:
\begin{cor}\label{cor:smooth}
	Let $Y_\A$ be nonsingular. Then the Fano scheme $\bF_k(Y_\A)$ is also nonsingular. In particular, $Z_{\pi,k}=\widehat{Z}_{\pi,k}$ for any maximal Cayley structure $\pi:\tau\to \Delta_\ell$ and any $k\leq \ell$.
\end{cor}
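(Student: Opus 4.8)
The plan is to prove regularity of $\bF_k(Y_\A)$ pointwise, reducing to torus-fixed points, where the normal bundle decomposition \eqref{eqn:n1} can be read off directly. First I would observe that $\bF_k(Y_\A)$ is projective, being closed in $\Gr(k+1,n+1)$, and that $T$ acts on it through its linear action on $\PP^n$. The singular locus is closed and, being preserved by automorphisms, is $T$-invariant; so if it were nonempty it would, as a nonempty complete $T$-variety, contain a $T$-fixed point by the Borel fixed point theorem. Hence it suffices to check smoothness at each $T$-fixed point $[L]$.

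Next I would identify the fixed points and compute the tangent space there. A $T$-fixed point is a $T$-invariant $k$-plane $L\subset Y_\A$, which by Theorem~\ref{thm:toric} lies on some component $Z_{\pi,k}$ and by \cite[Proposition 4.3]{toric} equals $L_\sigma$ for a $k$-dimensional $\pi$-face $\sigma$ of a maximal Cayley structure $\pi:\tau\to\Delta_\ell$. Since $Y_\A$ is smooth along $L$, the normal bundle sequence identifies the Zariski tangent space as $T_{[L]}\bF_k(Y_\A)=H^0(L,\N_{L/Y_\A})$ (see e.g.~\cite{EH}). Extending $\sigma$ to an $\ell$-dimensional $\pi$-face $\sigma'$ and applying \eqref{eqn:n1} with $L\cong\PP^k$, the negative summands in $\mcR$ contribute no sections, the $\dim\tau-\ell$ copies of $\CO_L(0)$ contribute one each, and the $\ell-k$ copies of $\CO_L(1)$ contribute $k+1$ each, so that
\[
\dim T_{[L]}\bF_k(Y_\A)=(k+1)(\ell-k)+(\dim\tau-\ell).
\]

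By \cite[Proposition 6.1]{toric} the right-hand side is exactly $\dim Z_{\pi,k}$. Since $[L]$ lies on the component $Z_{\pi,k}$, combining this with the general bound of Krull dimension by embedding dimension yields
\[
\dim Z_{\pi,k}\le \dim_{[L]}\bF_k(Y_\A)\le \dim T_{[L]}\bF_k(Y_\A)=\dim Z_{\pi,k},
\]
forcing equality throughout and hence regularity of $\bF_k(Y_\A)$ at $[L]$. Having handled all fixed points, I would conclude that $\bF_k(Y_\A)$ is smooth everywhere; being smooth it is reduced, so each component $\widehat{Z}_{\pi,k}$ carries its reduced structure and $\widehat{Z}_{\pi,k}=Z_{\pi,k}$.

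Most of the difficulty is already absorbed into Theorem~\ref{thm:piface}, so the main remaining ingredients are the reduction to fixed points via the $T$-action and the tangent space identification. I expect the one genuine subtlety to be that a fixed point $[L]$ may a priori lie on several components $Z_{\pi,k}$; this is precisely resolved by the numerical coincidence above, since equality of local dimension and tangent space dimension forces the local ring to be regular, and in particular a domain with a single local branch.
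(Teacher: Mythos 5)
Your proof is correct and is essentially the same as the paper's: reduce to torus-fixed points $[L_\sigma]$, identify the tangent space there with $H^0(L_\sigma,\N_{L_\sigma/Y_\A})$ via \eqref{eqn:n1}, and match its dimension $\dim\tau-\ell+(k+1)(\ell-k)$ against the lower bound $\dim Z_{\pi,k}$ from \cite[Proposition 6.1]{toric}. The only difference is that you spell out the Borel fixed-point reduction and the multiple-components subtlety, both of which the paper leaves implicit.
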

\begin{proof}
We will show that $\bF_k(Y_\A)$ is nonsingular at its torus fixed points; it follows that $\bF_k(Y_\A)$ is nonsingular everywhere.
As noted above, every fixed point of $\bF_k(Y_\A)$ is of the form $[L_\sigma]$ for $\sigma$ a $k$-dimensional $\pi$-face of some Cayley structure $\pi$. The dimension of $\bF_k(Y_\A)$ at $[L_\pi]$ is at least 
\[
	\dim Z_{\pi,k}= \dim \tau-\ell+(k+1)(\ell-k)
\]
by \cite[Proposition 6.1]{toric}. 

On the other hand, the tangent space of $\bF_k(Y_\A)$ at $[L_\sigma]$ may be identified with $H^0(L_\sigma,N_{L_\sigma/Y_\A})$, see e.g.~\cite[Theorem 6.13]{EH}. By \eqref{eqn:n1}, its dimension is also
\[\dim \tau-\ell+(k+1)(\ell-k).\] Thus, $\bF_k(Y_\A)$ is smooth at $[L_\sigma]$.
\end{proof}
\begin{rem}
	For $k\geq 2$, a slightly more straightforward proof of the non-singularity of $\bF_k(Y_\A)$ is possible. As noted above, the normal bundle 
	$\N_{L/Y_\A}$
	for $L$ at any torus fixed point is a direct sum of line bundles. But then $H^1(L,\N_{L/Y_\A})=0$, since $L\cong \PP^k$, $k\geq 2$. It follows that $\bF_k(Y_\A)$ is nonsingular at each torus fixed point, hence is nonsingular.
\end{rem}

\subsection{Normal Bundles and Cox Coordinates}
We continue under the assumption that $Y_\A$ is nonsingular.
Let  $X\subset Y_\A$ be a complete intersection of type $\balpha=(\alpha_1,\ldots,\alpha_r)$ as in \S\ref{sec:phi} and $L\subset X$ be a $k$-plane.
Then there is an exact sequence 
\begin{equation}\label{eqn:normal}
0\to \N_{L/X} \to \N_{L/Y_\A} \to \N_{X/Y_\A|L}
\end{equation}
of (restrictions of) normal bundles, see e.g.~\cite[Proposition-Definition 6.15]{EH}.
The space of global sections of $\N_{L/X}$ may be identified with the tangent space of the point $[L]$ of $\bF_k(X)$ \cite[Theorem 6.13]{EH}. We wish to show that, in favorable situations, the dimension of this tangent space agrees with the expected dimension. To that end, we need a better understanding of the normal bundles appearing in this exact sequence.

In the situation that $L=L_\sigma$ for some $\pi$-face $\sigma\prec \A$, Theorem \ref{thm:piface} gives us good control over $N_{L/Y_\A}$. On the other hand since $X$ is a complete intersection in $Y_\A$, we have
\begin{equation}\label{eqn:n2}
	\N_{X/Y_A|L}\cong \bigoplus_{i=1}^r\CO_L(\alpha_i)\cong \bigoplus_{i=1}^r\CO_L(\delta_i).
\end{equation}

The \emph{Cox ring} of $Y_\A$ is the polynomial ring \[R(Y_\A)=\KK[y_F\ |\ F\ \textrm{a facet of}\ \A],\]
see \cite[Chapter 5]{cls} for details. This ring comes with a grading by the Picard group of $Y_\A$.
For any torus invariant divisor $D$, the degree $[D]$ graded piece of $R(Y_\A)$ may be canonically identified with the space of global sections of $\CO_{Y_\A}(D)$. 
The ring of Laurent polynomials of degree zero (under the grading by $\Pic(Y_\A)$) is canonically identified with $\KK[M]$.
Furthermore, any subscheme of $Y_\A$ may be described via a homogeneous ideal of $Y_\A$. In particular, for a $\pi$-face $\sigma$, $L_\sigma$ is described by the ideal $I_\sigma$ generated by the $y_F$ for $F\in \F_\sigma$. 
Likewise, each divisor $D_i$ of class $\alpha_i$ cutting out $X$ 
corresponds to an element $g_i\in \KK[y_F]$ of degree $\alpha_i$.  
The condition that $L_\sigma$ is contained in each $D_i$ implies that $g_i$ is contained in the saturation of $I_\sigma$ by a certain monomial ideal (the so-called \emph{irrelevant ideal}), see e.g.~\cite[Proposition 6.A.7]{cls}. But $I_\sigma$ is already saturated with respect to the maximal ideal generated by all the variables $y_F$, so it is in particular saturated with respect to the irrelevant ideal. We conclude that in fact $g_i$ is contained in $I_\sigma$. Hence, we may write
\begin{equation}\label{eqn:g_i}
	g_i=\sum_{F\in \F_\sigma} y_F\cdot g_{i}^F
\end{equation}
for some $g_{i}^F\in R(Y_\A)$.

The variety $Y_\sigma\cong \PP^k$ is also toric, hence it also has a Cox ring \[
	R(Y_\sigma)=\KK[y_F\ |\ F\ \textrm{a facet of}\ \sigma
		],
\]
which may be identified with the homogeneous coordinate ring of $\PP^k$.
\begin{lemma}
The natural map $\rho:R(Y_\A)\to R(Y_\sigma)$ induced by restricting sections of line bundles from $Y_\A$ to $Y_\sigma$ is defined by 
\[
	y_F\mapsto \rho(y_F)=\begin{cases}
		0& \sigma \subseteq F\\
		1 & \sigma\cap F=\emptyset\\
		y_{F\cap \sigma} & \textrm{else}
	\end{cases}.
\]
\end{lemma}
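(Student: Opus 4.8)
The plan is to use that the Cox ring $R(Y_\A)$ is a polynomial ring in the variables $y_F$, so that $\rho$ is determined by the images $\rho(y_F)$ of the generators, and to compute each image by identifying the scheme-theoretic intersection $D_F\cap Y_\sigma$. The starting observation is that, in the degree-$[D_F]$ piece of $R(Y_\A)$, the variable $y_F$ is the canonical section of $\CO(D_F)$ whose divisor of zeros is exactly $D_F$. Since restriction of a section is compatible with restriction of its associated Cartier divisor, for any facet $F$ with $Y_\sigma\not\subseteq D_F$ the image $\rho(y_F)$ is the section of $\CO(D_F)|_{Y_\sigma}$ cutting out the divisor $D_F\cap Y_\sigma$ on $Y_\sigma\cong\PP^k$.

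First I would record the intersection combinatorially. Since $D_F=Y_F$ and $Y_\sigma$ are the orbit closures of the faces $F$ and $\sigma$ of $\conv\A$, one has $Y_F\cap Y_\sigma=Y_{F\cap\sigma}$ whenever $F\cap\sigma\neq\emptyset$, and $Y_F\cap Y_\sigma=\emptyset$ otherwise. This immediately handles two of the three cases. If $\sigma\subseteq F$ then $F\cap\sigma=\sigma$, so $Y_\sigma\subseteq D_F$, the section $y_F$ vanishes identically on $Y_\sigma$, and $\rho(y_F)=0$. If $\sigma\cap F=\emptyset$ then $D_F\cap Y_\sigma=\emptyset$, so $\CO(D_F)|_{Y_\sigma}$ is trivial and $y_F$ restricts to a nowhere-vanishing section, i.e.\ a nonzero constant, which the normalization of the canonical isomorphism $\CO(D_F)|_{Y_\sigma}\cong\CO_{\PP^k}$ from Remark \ref{rem:canon} pins down to be $1$.

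For the remaining case I would first establish the combinatorial trichotomy that, for a facet $F$ with $\sigma\not\subseteq F$ and $\sigma\cap F\neq\emptyset$, the face $F\cap\sigma$ is in fact a facet of $\sigma$, so that $y_{F\cap\sigma}$ is a genuine Cox variable of $Y_\sigma$. Here I use that smoothness of $Y_\A$ makes $\conv\A$ a simple, unimodular polytope of dimension $m$ and $\conv\sigma$ a unimodular $k$-simplex: working at any vertex $v\in\sigma$, the $m$ facets through $v$ are dual to the $m$ edges through $v$, each facet omitting exactly one of these edges, so a facet through $v$ that does not contain $\sigma$ must meet $\sigma$ in the codimension-one face of $\sigma$ spanned by all but one of the edges of $\sigma$ at $v$; as $\sigma$ is a simplex this forces $F\cap\sigma$ to be a facet. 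Granting this, $D_F\cap Y_\sigma=Y_{F\cap\sigma}$ is the torus-invariant prime divisor of $Y_\sigma$ attached to the facet $F\cap\sigma$, and transversality of the smooth strata of a smooth toric variety shows $y_F$ restricts with a simple zero along it; hence $\rho(y_F)$ is a linear form cutting out this hyperplane, namely a scalar multiple of $y_{F\cap\sigma}$, which the canonical normalization again makes equal to $y_{F\cap\sigma}$.

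The hard part is the bookkeeping behind the first paragraph: to know that the restriction-of-sections map really is a graded ring homomorphism carrying the degree-$[D_F]$ generator to the canonical section of $D_F\cap Y_\sigma$, one must fix the isomorphisms $\CO(\alpha)|_{Y_\sigma}\cong\CO_{\PP^k}(\deg(\alpha|_{Y_\sigma}))$ compatibly with tensor products, as in Remark \ref{rem:canon}; it is precisely this normalization that forces the scalars in the last two cases to equal $1$ rather than arbitrary constants. The only other subtlety, the trichotomy together with the simple-zero (multiplicity-one) statement, is exactly where smoothness of $Y_\A$ enters.
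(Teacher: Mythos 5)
Your proof is correct, and while it shares the paper's skeleton---interpret $y_F$ as the canonical section of $\CO(D_F)$ whose zero divisor is $D_F$, then compute its restriction in three cases---the two harder cases are handled by genuinely different means. For $\sigma\subseteq F$, the paper re-identifies $y_F$ with the section $\chi^u$ of $\CO(D_F+\Div\chi^{-u})$, with $u$ chosen so that the twisted divisor no longer contains $L_\sigma$ in its support, and then applies Proposition \ref{prop:restrict} to get $0$ since $u\notin M_\sigma$; your direct observation that $y_F$ vanishes identically on $Y_\sigma\subseteq D_F$, and hence restricts to the zero section under \emph{any} identification of $\CO(D_F)|_{Y_\sigma}$ with some $\CO_{\PP^k}(s)$, is simpler and avoids the twisting entirely. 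For the case $\emptyset\neq F\cap\sigma\subsetneq \sigma$, the paper again invokes Proposition \ref{prop:restrict} to compute $(D_F)_{|L_\sigma}=D_{F\cap\sigma}$, whereas you argue geometrically via orbit closures: simplicity of $\conv\A$ at a vertex of $F\cap\sigma$ forces $F\cap\sigma$ to be a facet of $\sigma$ (a fact the paper uses implicitly---otherwise $y_{F\cap\sigma}$ is not even a variable of $R(Y_\sigma)$---but never proves), and transversality of orbit closures in a smooth toric variety gives the multiplicity-one statement. What the paper's route buys is that the scalars come out as exactly $1$ automatically, because Proposition \ref{prop:restrict} tracks the invariant sections $\chi^u$ themselves; in your route the zero divisor determines the restricted section only up to scalar, and you must---as you correctly flag---pin the constants down by fixing the identifications compatibly, as in Remark \ref{rem:canon}. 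You assert this normalization rather than carry it out, but it is bookkeeping of the same kind that the paper's own terse proof also leaves implicit, so this is not a genuine gap.
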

\begin{proof}
	The monomial $y_F$ corresponds canonically to the global section $1$ of the bundle $\CO(D_F)$. As long as $\sigma$ does not intersect $F$, $D_F$ restricts to the trivial divisor on $L_\sigma$, so we obtain the global section $1$ of the bundle $\CO_{L_\sigma}$. This corresponds to the element $1$ of $R(Y_\sigma)$.
	If instead $\sigma$ intersects $F$ but is not contained in it, $L_\sigma$ is not contained in the support of $D_F$, and we may consider $(D_F)_{|L_\sigma}$. It follows from Proposition \ref{prop:restrict} that this restriction is $D_{F\cap \sigma}$, and so in this case $y_F$ maps to $y_{F\cap \sigma}$. 
	
	Finally, if $\sigma$ is contained in $F$, we instead identify $y_F$ with the section $\chi^u$ of the bundle $\CO(D_F+\Div\chi^{-u})$ for an appropriate choice of $u$ such that $D_F+\Div\chi^{-u}$ doesn't contain $L_\sigma$ in its support.
	Taking $\nu\in N$ to be the primitive generator of the inward normal of $F$, the requirement is exactly that $\nu(u)=1$. But since $\sigma\subset F$, for any $w\in M_\sigma$ we have $\nu(w)=0$. In particular, $u\notin M_\sigma$, so by Proposition \ref{prop:restrict}, $\chi^u$ restricts to $0$. It follows that $y_F$ does as well. 
\end{proof}

\begin{lemma}\label{lemma:surj2}
	Written using Cox coordinates, the map
\[
	\bigoplus_{F\in \F_\sigma} \CO_L([D_F]) \to \bigoplus_{i=1}^r\CO_L(\delta_i)
\]
induced by \eqref{eqn:normal} and the above identifications of normal bundles sends 
a  section $f$ of $\CO_L([D_F])$ to the section $(f\cdot \rho(g_i^F))_{i=1}^r$ of $\bigoplus_{i=1}^r \CO_L(\delta_i)$.
\end{lemma}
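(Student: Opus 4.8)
The plan is to work with the conormal bundles, where the maps are most transparent, and dualize only at the very end. Both $L=L_\sigma$ and $X$ are local complete intersections in $Y_\A$, so the relevant conormal sheaves are locally free and the sequence dual to \eqref{eqn:normal} reads
\[
\N^*_{X/Y_\A}|_L \xrightarrow{\ \alpha\ } \N^*_{L/Y_\A} \to \N^*_{L/X}\to 0,
\]
where $\alpha$ is induced by the inclusion of ideal sheaves $I_X\subseteq I_L$ (valid since $L\subseteq X$). The asserted map on normal bundles is precisely the dual $\alpha^\vee$, so it suffices to compute $\alpha$ in suitable bases and then transpose.

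First I would fix these bases. The conormal bundle $\N^*_{X/Y_\A}=I_X/I_X^2\cong\bigoplus_{i=1}^r\CO_X(-\alpha_i)$ is freely generated by the classes of the defining equations $g_1,\ldots,g_r$; dually $\N_{X/Y_\A}\cong\bigoplus_i\CO_X(\alpha_i)$ carries the dual basis, which after restriction to $L$ and the canonical identification $\CO_L(\alpha_i)=\CO_L(\delta_i)$ of Remark \ref{rem:canon} recovers \eqref{eqn:n2}. Similarly, by Theorem \ref{thm:piface}\eqref{claim:1} the subscheme $L$ is the complete intersection of the $D_F$ for $F\in\F_\sigma$, so $\N^*_{L/Y_\A}=I_L/I_L^2\cong\bigoplus_{F\in\F_\sigma}\CO_L(-[D_F])$ is freely generated by the Cox variables $y_F$, and its dual $\N_{L/Y_\A}\cong\bigoplus_{F\in\F_\sigma}\CO_L([D_F])$ recovers \eqref{eqn:n1}.

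The heart of the argument is then a single congruence. Since $I_X\subseteq I_L$, the comap $\alpha$ sends the class of $g_i$ to its class in $I_L/I_L^2$. Feeding in the expression $g_i=\sum_{F\in\F_\sigma}y_F\cdot g_i^F$ from \eqref{eqn:g_i} and using that $y_F\in I_L$, I reduce modulo $I_L^2$: choosing any lift of $\rho(g_i^F)$, the difference $g_i^F-\rho(g_i^F)$ lies in $I_L$, so $y_F\bigl(g_i^F-\rho(g_i^F)\bigr)\in I_L^2$, whence $y_F g_i^F\equiv \rho(g_i^F)\,y_F \pmod{I_L^2}$. Therefore $\alpha$ is represented in the chosen bases by the matrix $\bigl(\rho(g_i^F)\bigr)$, sending the generator $g_i$ to $\sum_{F}\rho(g_i^F)\,y_F$. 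Dualizing transposes this matrix, so the basis element of $\N_{L/Y_\A}$ dual to $y_F$—namely a section $f$ of $\CO_L([D_F])$—is sent to $\bigl(f\cdot\rho(g_i^F)\bigr)_{i=1}^r$ in $\bigoplus_{i=1}^r\CO_L(\delta_i)$, which is exactly the claim.

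A degree check confirms internal consistency: $g_i^F$ has Picard class $\alpha_i-[D_F]$, so $\rho(g_i^F)$ is a section of $\CO_L\bigl(\delta_i-\deg (D_F)_{|L}\bigr)$, and multiplying by the section $f$ of $\CO_L([D_F])=\CO_L\bigl(\deg (D_F)_{|L}\bigr)$ lands in $\CO_L(\delta_i)$. I expect the only delicate point to be the bookkeeping of the two dual bases: one must verify that under the identifications \eqref{eqn:n1} and \eqref{eqn:n2} the Cox generators $y_F$ of $\N^*_{L/Y_\A}$ are genuinely dual to the summands $\CO_L([D_F])$, that the $g_i$ are dual to the summands $\CO_L(\delta_i)$, and that the restriction of sections used throughout agrees with the map $\rho$ of the preceding lemma. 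Once these identifications are pinned down, the congruence and the transposition are routine.
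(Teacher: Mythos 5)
Your overall route---passing to the conormal sequence, computing the matrix of $I_X/I_X^2\otimes\CO_L\to I_L/I_L^2$ in the bases given by the classes of the $g_i$ and of the $y_F$, and then transposing---is just the dual formulation of what the paper does (the paper computes $\Hom(I,S/I)\to\Hom(J,S/I)$ directly on affine charts), so the strategy is sound. The genuine gap is the congruence step, which you assert in the Cox ring, where it is false, or rather ill-posed. The elements $y_F$, $g_i^F$, and $\rho(g_i^F)$ are homogeneous elements of graded rings, not functions, and $\rho(g_i^F)$ lives in $R(Y_\sigma)$, not $R(Y_\A)$, so your claim that ``the difference $g_i^F-\rho(g_i^F)$ lies in $I_L$'' has two problems. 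First, a lift of $\rho(g_i^F)$ to the graded piece $R(Y_\A)_{\alpha_i-[D_F]}$ need not exist at all: degreewise surjectivity of $\rho$ is a restriction-surjectivity hypothesis (compare assumption (3) of Theorem \ref{thm:empty}), not something available in this lemma. Second, and more fundamentally, the kernel of $\rho$ is strictly larger than $I_\sigma$: it also contains the inhomogeneous elements $y_{F'}-1$ for facets $F'$ disjoint from $\sigma$. Concretely, if $g_i^F=y_{F'}y_{F''}$ with $F'\cap\sigma=\emptyset$ and $F''$ meeting $\sigma$ but not containing it, then $\rho(g_i^F)=y_{F''\cap\sigma}$, whose natural lift is $y_{F''}$, and $g_i^F-y_{F''}=y_{F''}(y_{F'}-1)\notin I_\sigma$; hence $y_Fg_i^F\not\equiv y_F\,y_{F''}\pmod{I_\sigma^2}$ in $R(Y_\A)$, and your matrix computation breaks exactly at the entry it is supposed to produce.

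The repair is precisely the bookkeeping you defer as ``routine,'' and it is where the paper's proof does its real work: on a torus-invariant affine chart $U$ one trivializes each class $\beta$ by a monomial $q_\beta$, replaces $y_F,g_i,g_i^F$ by the genuine regular functions $z_F=y_F/q_{[D_F]}$, $h_i=g_i/q_{\alpha_i}$, $h_i^F=g_i^F/q_{\alpha_i-[D_F]}$, runs the congruence there (where it is valid, since the ideal of $L$ in $U$ really is generated by the $z_F$ and any local lift works), and then verifies that the residue classes $\overline{h_i^F}$ glue to the section $\rho(g_i^F)$ under the same trivializations, i.e.\ $\rho(g_i^F)/q_{(\alpha_i-[D_F])_{|L}}=\overline{h_i^F}$. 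Without that verification, the matrix you produce is only defined chart by chart and the phrase ``multiplication by $\rho(g_i^F)$ in Cox coordinates'' has no content. Alternatively, a fully global repair is possible: restrict the commutative square formed by the presentations $\bigoplus_i\CO_{Y_\A}(-D_i)\to I_X$ ($e_i\mapsto g_i$) and $\bigoplus_F\CO_{Y_\A}(-D_F)\to I_L$ ($e_F\mapsto y_F$), with connecting map the matrix of multiplications by $g_i^F$, and then invoke the preceding lemma, which identifies restriction of sections with $\rho$. Either way, the step you label routine \emph{is} the lemma; as written, your global congruence fails.
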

\begin{proof}
	We work locally. Fix any open affine torus invariant $U\subset Y_\A$ containing a torus fixed point. 
	For any class $\beta \in \Pic Y_\A$, there is a unique 
torus invariant divisor $D=\sum a_F D_F$ of class $\beta$ which is trivial on $U$. We write
$q_\beta=\prod_F y_F^{a_F}$.
	
For each $F\in \F_\sigma$, $D_F$ restricted to $U$ is the principal divisor of $z_F:=y_F/q_{[D_F]}$.
Likewise, each $D_i$ restricted to $U$ is the principal divisor  
of $h_i:=g_i/q_{\alpha_i}$. Setting $h_i^F=g_i^F/q_{\alpha_i-[D_f]}$, we then have
\[
	h_i=\sum_{F\in \F_\sigma} z_F\cdot h_i^F,
\]
and that the $z_F,h_i^F$ are regular functions on $U$. 

	Denote by $S$ the coordinate ring of $U$. Let $I$ be the ideal of $S$ generated by the $z_F$. Likewise, let $J$ be the ideal of $S$ generated by the $h_i$. Then $I$ is the ideal of $L$ in $U$, $J$ is the ideal of $X$ in $U$, and $J\subseteq I$. 
	The map $\N_{L/Y_\A}\to \N_{X/Y|L}$ is locally given by the natural module homomorphism
	$\Hom(I,S/I)\to \Hom(J,S/I)$
obtained by restricting from $I$ to $J$.
Viewed as an $S/I$-module, $\Hom(I,S/I)$ is free with generators $\psi_F$ for $F\in \F_\sigma$ defined via
\[
	\psi_F (z_{F'})=\begin{cases} 1 & F=F', \\ 0 & \textrm{else}.\end{cases}
\]
Likewise, $\Hom(J,S/I)$ is free with generators $\psi_i$ defined by
\[
	\psi_i (h_j)=\begin{cases} 1 & i=j, \\ 0 & \textrm{else}.\end{cases}
\]
By $S/I$-linearity, we  see that $\psi_F$ maps to $\sum_i \overline h_i^F\psi_i$, where $\overline h_i^F$ the residue class of $h_i^F$ in $S/I$.

Since
\[\rho(g_i^F)/q_{(\alpha_i-[D_F])_{|L}}
=\overline {g_i^F/ q_{\alpha_i-[D_F]}}
=\overline h_i^F,
\]
we see that after globalizing and passing back to Cox coordinates, the map of \eqref{eqn:normal} agrees with the description in the lemma.
\end{proof}

\begin{prop}\label{prop:surj}
Assume that $Y_\A$ is nonsingular, and let $L=L_\sigma$, $X$, and $g_i^F$ be as above in Equation~\eqref{eqn:g_i}.
Then the induced map
\begin{equation}\label{eqn:gn}
H^0(L,\N_{L/Y_\A}) \to H^0(L,\N_{X/Y_\A|L})
\end{equation}
is surjective if and only if the $R(L)$-submodule of $\bigoplus_{j\geq 0}H^0(L,\CO_L(j))^r$ generated by $(\rho(g_i^F))_{i=1}^r$ for $F\in \F_\sigma\setminus \F_\tau$ contains 
$ H^0(L,\CO_L(\delta_1))\oplus\cdots\oplus H^0(L,\CO_L(\delta_r)) $.

Furthermore, in this case, $\dim H^0(L,\N_{L/X})=\phi$, the expected dimension.
\end{prop}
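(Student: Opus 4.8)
The plan is to translate the surjectivity of \eqref{eqn:gn} into a purely graded-module statement using Lemma \ref{lemma:surj2}, and then to read off the dimension from the resulting exact sequence. First I would make the source and target of \eqref{eqn:gn} explicit. By \eqref{eqn:n1} and \eqref{eqn:n2} together with Theorem \ref{thm:piface}, taking global sections turns the sheaf map of Lemma \ref{lemma:surj2} into
\[
\bigoplus_{F\in\F_\sigma} H^0(L,\CO_L(s_F)) \to \bigoplus_{i=1}^r H^0(L,\CO_L(\delta_i)),\qquad (f_F)_F \mapsto \Bigl(\sum_F f_F\cdot\rho(g_i^F)\Bigr)_{i=1}^r,
\]
where $s_F=\deg\CO_L(D_F)$. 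Since $L\cong\PP^k$, the summands with $F\in\F_\tau$ have $s_F<0$ and hence vanish, so only the $F\in\F_\sigma\setminus\F_\tau$ contribute; call the resulting map $\Psi$ and write $W=\bigoplus_i H^0(L,\CO_L(\delta_i))$ for its target. Surjectivity of \eqref{eqn:gn} is exactly $\mathrm{Im}(\Psi)=W$.

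The key step is to recognize $\mathrm{Im}(\Psi)$ as a graded piece of the module $N$ appearing in the statement. I would endow the free module $\bigoplus_{j\geq 0}H^0(L,\CO_L(j))^r$ with the grading in which a degree-$d$ element placed in the $i$-th factor has degree $d-\delta_i$. With this convention the generator $v_F:=(\rho(g_i^F))_{i}$ is homogeneous of degree $-s_F$ (its $i$-th entry has ordinary degree $\delta_i-s_F$), the target $W$ is precisely the degree-$0$ piece, and $N$ is a graded submodule. Multiplying $v_F$ into degree $0$ requires exactly a factor from $H^0(L,\CO_L(s_F))$, so the degree-$0$ piece $N_0$ of $N$ equals $\mathrm{Im}(\Psi)$. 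Because $N_0=N\cap W$, we get $\mathrm{Im}(\Psi)=W$ if and only if $N\supseteq W$, which is the claimed equivalence. Unwinding this with homogeneous components is the only genuinely delicate point: given $w\in W\cap N$, writing $w=\sum_F p_F v_F$ and extracting the degree-$s_F$ part $f_F$ of each $p_F$ produces a preimage $(f_F)_F$ under $\Psi$ that works simultaneously in all $r$ coordinates, precisely because $s_F$ is independent of $i$.

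For the final dimension claim I would apply $H^0$ to the left-exact sequence \eqref{eqn:normal}, giving $H^0(L,\N_{L/X})=\ker\bigl(H^0(L,\N_{L/Y_\A})\to H^0(L,\N_{X/Y_\A|L})\bigr)$. When \eqref{eqn:gn} is surjective, the rank-nullity theorem yields
\[
\dim H^0(L,\N_{L/X})=\dim H^0(L,\N_{L/Y_\A})-\dim H^0(L,\N_{X/Y_\A|L}).
\]
By \eqref{eqn:n1} the first term is $(k+1)(\ell-k)+(\dim\tau-\ell)$, since the negative-degree bundles in $\mcR$ contribute nothing, and by \eqref{eqn:n2} the second is $\sum_{i=1}^r\binom{k+\delta_i}{k}$; their difference is exactly $\phi$ from \eqref{eqn:expected}. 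I expect the main obstacle to be purely organizational---setting up the shifted grading so that each $v_F$ is homogeneous and $W$ is its degree-$0$ piece---after which both assertions follow formally.
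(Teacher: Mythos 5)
Your proposal is correct and follows essentially the same route as the paper: identify the source and target of \eqref{eqn:gn} via \eqref{eqn:n1}, \eqref{eqn:n2} and Theorem \ref{thm:piface}, use Lemma \ref{lemma:surj2} to see that the induced map on global sections is multiplication by the $\rho(g_i^F)$ (with only $F\notin\F_\tau$ contributing), and obtain the dimension count from left-exactness of \eqref{eqn:normal} on $H^0$ together with the surjectivity hypothesis. Your shifted-grading argument merely spells out the translation between surjectivity and the module-theoretic condition that the paper leaves implicit, and it is sound.
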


\begin{proof}
We've identified the codomain of the map \eqref{eqn:gn}  with $ H^0(L,\CO_L(\delta_1))\oplus\cdots\oplus H^0(L,\CO_L(\delta_r)) $.
The summands of 
$\N_{L/Y_\A}\cong \bigoplus_{F\in \F_\sigma} \CO_L([D_F])$ with global sections are exactly those with $F\notin \F_\tau$ by Theorem \ref{thm:piface}. The first claim now follows from the description of the map \eqref{eqn:gn} from Lemma \ref{lemma:surj2}.

For the second claim, we use the exactness sequence of sheaves in~\eqref{eqn:normal} to obtain the exact sequence of cohomology groups 
\begin{equation*}
0\to H^0(L,\N_{L/X}) \to H^0(L,\N_{L/Y_\A}) \to H^0(L,\N_{X/Y_\A|L})\to 0
\end{equation*}
along with \eqref{eqn:n1} and \eqref{eqn:n2}
to count
\begin{align*}
\dim  H^0(L,\N_{L/X})&=\dim  H^0(L,\N_{L/Y_\A})- \dim H^0(L,\N_{X/Y_\A|L})\\&=\left(\dim \tau-\ell + (k+1)(\ell-k)\right)-\left(\sum_{i=1}^r {k+\delta_i\choose k} \right)\\&=\phi(\A,\pi,\balpha,k).
\end{align*}\end{proof}

\subsection{Criterion for Surjectivity}
When can we find $X\subset Y_\A$ such that the map of Proposition \ref{prop:surj} is surjective?
To find such $X$, we will choose $g_i^F\in R(Y_\A)_{[D_i-D_F]}$ to obtain $g_i\in R(Y_\A)$ as in \eqref{eqn:g_i}, and hence divisors $D_i$.
Note that, instead of choosing the $g_i^F$, we can focus on choosing their restrictions $\rho(g_i^F)$.
To do this, we need the following algebraic fact:
\begin{lemma}\label{lemma:surj}
	Let $a,b,\delta_i\in \ZZ_{\geq 0}$, $i=1,\ldots r$.
Assume that 
\begin{align}
	a+b-k-r\geq 0;\label{eqn:1}\\
	a(k+1)+b-\sum_{i=1}^r {\delta_i+k\choose k}\geq 0\label{eqn:2}
\end{align}	
and furthermore that one of the following three conditions hold:
\begin{enumerate}
	\item $\delta_i\geq 3$ for some $i$;
	\item $\delta_i\geq 2$ for at least two indices $i$; or
	\item  $a-k-\#\{i\ |\ \delta_i=1\}\geq 0$.
\end{enumerate}
Then the map
	\begin{align*}
	\lambda:H^0(\PP^k,\CO_{\PP^k}(1))^{\oplus a}\oplus H^0(\PP^k,\CO_{\PP^k})^{\oplus b}&\to \bigoplus_{i=1}^r H^0(\PP^k,\CO_{\PP^k}(\delta_i))\\
	((s_j)_{j=1}^a,(s'_j)_{j=1}^b)&\mapsto \left(\sum_{j=1}^a s_j\cdot h_{ij}+\sum_{j=1}^b s_j'\cdot h_{ij}'\right)_{i=1}^r\
\end{align*}
is surjective for general choice of 
$h_{ij}\in H^0(\PP^k,\CO_{\PP^k}(\delta_i-1))$ and 
$h_{ij}'\in H^0(\PP^k,\CO_{\PP^k}(\delta_i))$. 
\end{lemma}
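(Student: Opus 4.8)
The plan is to establish surjectivity by a semicontinuity reduction followed by the construction of one good example. Fixing bases, the locus of $(h_{ij},h_{ij}')$ for which $\lambda$ fails to be surjective is the vanishing locus of all maximal minors of the matrix of $\lambda$, hence closed; so surjectivity holds for general choices as soon as it holds for a single choice. Equivalently, I must show that the generic rank of $\lambda$ equals $\dim\bigoplus_i H^0(\CO_{\PP^k}(\delta_i))=\sum_i\binom{\delta_i+k}{k}$. Note that \eqref{eqn:2} says exactly that the source dimension $a(k+1)+b$ is at least this target dimension, the obvious necessary condition, while \eqref{eqn:1} and the trichotomy are the extra inputs needed to handle small $\delta_i$.

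First I would isolate the single-factor statement $r=1$. Here the image of $\lambda$ is $\sum_j U_1 h_{1j}+\langle h_{1j}'\rangle$, where $U_1=H^0(\CO_{\PP^k}(1))$; the span of the general constant sections $h_{1j}'$ is a general $b$-dimensional subspace, so everything reduces to computing $\dim\sum_j U_1 h_{1j}$, the dimension of the degree-$\delta_1$ part of the ideal generated by the general forms $h_{1j}$ of degree $\delta_1-1$. This is governed by the Hilbert function of generic forms one degree above the generators (the Hochster--Laksov range), which I would compute directly: for $\delta_1\geq 3$ multiplication by $U_1$ is injective, since the only syzygies are Koszul and live in degree $2(\delta_1-1)>\delta_1$; for $\delta_1=2$ one must subtract the $\binom{a}{2}$ Koszul \emph{linear} syzygies. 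This Koszul defect is precisely what forces the hypotheses: \eqref{eqn:1} supplies the additional constant sections needed when $\delta_1=2$, and condition (3) guarantees $a$ is large enough. The upshot is that the base case follows from a numerical check combining \eqref{eqn:1}, \eqref{eqn:2}, and the trichotomy, with no appeal to the open Fröberg conjecture since we stay in the range where the expected value is provable.

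For $r\geq 2$ I would organize the argument as an induction on $r$. Writing $\lambda=(\lambda_1,\dots,\lambda_r)$ with $\lambda_i$ the projection to $H^0(\CO_{\PP^k}(\delta_i))$, the map $\lambda_{<r}=(\lambda_1,\dots,\lambda_{r-1})$ is surjective for general choices, and $\lambda$ is surjective if and only if $\lambda_r$ restricts to a surjection on $\ker\lambda_{<r}$, equivalently $\ker\lambda_{<r}+\ker\lambda_r$ is the whole source. The expected dimension count for this sum to fill the source is exactly $a(k+1)+b\geq\sum_i\binom{\delta_i+k}{k}$, namely \eqref{eqn:2}. Since the multipliers $h_{rj},h_{rj}'$ defining $\lambda_r$ are independent of the data defining $\lambda_{<r}$, the two kernels are cut out by independent general forms, and a transversality argument should place them in general position, yielding the required surjection.

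The crucial point, and the step I expect to be the main obstacle, is that the factors genuinely couple: one cannot partition the generators among the factors and solve each single-factor problem separately. For instance, with $k=1$ (so $\PP^k=\PP^1$), $r=2$, $\delta_1=\delta_2=2$, $a=3$, $b=0$ all hypotheses hold, yet three generators cannot be split into two groups each spanning the $3$-dimensional space $H^0(\CO_{\PP^1}(2))$; surjectivity holds only because the shared sections $s_j$ contribute to both factors at once. Thus the heart of the proof is the general-position/transversality assertion that the coupled multiplication map attains maximal rank, and making this rigorous — for example by exhibiting one transverse configuration through a degeneration to monomials, or by a dimension count on an incidence correspondence in the spirit of the proof of Lemma~\ref{lemma:phi} — is where the real work lies. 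Throughout, the trichotomy together with \eqref{eqn:1} serves to absorb the failure of the multiplication maps to have maximal rank in the low-degree cases $\delta_i\leq 2$, ensuring that the constant generators counted by $b$ can always make up the resulting deficiency.
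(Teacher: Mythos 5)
Your openness reduction and your diagnosis of the coupling phenomenon are both correct (the example $k=1$, $r=2$, $\delta_1=\delta_2=2$, $a=3$, $b=0$ is a good illustration), but the proposal has a genuine gap exactly where you suspect it: the inductive step for $r\geq 2$. The claim that, because the data $(h_{rj},h'_{rj})$ are chosen independently of the data defining $\lambda_{<r}$, ``a transversality argument should place'' $\ker\lambda_{<r}$ and $\ker\lambda_r$ in general position is not a proof and does not follow from independence. These kernels are not general points of the Grassmannian of subspaces of the source; they range over constrained families (kernels of multiplication-type maps), and two independently drawn members of constrained families need not meet transversally --- establishing maximal rank for such structured maps is precisely the hard content of the lemma (it is the same kind of difficulty that makes Fr\"oberg-type statements nontrivial). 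Moreover, your induction can strand you in sub-problems where the trichotomy fails: e.g.\ if the full problem satisfies condition (1) only via $\delta_r\geq 3$, then $\lambda_{<r}$ with all remaining $\delta_i\leq 1$ need not satisfy any of the three conditions, so the inductive hypothesis cannot be invoked as stated. Your $r=1$ analysis via Hochster--Laksov is essentially fine (and the paper cites \cite{hochster} for exactly that case), but the lemma's whole point is the coupled multi-factor statement, which your argument does not reach.

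The paper's proof avoids induction entirely and instead runs the incidence-correspondence count you gesture at in your last paragraph, following \cite[\S2]{debarre:98a}. One parametrizes pairs consisting of the data $(h_{ij},h'_{ij})\in\mcH$ together with a functional $\psi\in W^*$ killing the image of $\lambda$, stratifies by $t=\codim\bigl(\mu^{-1}(\ker\psi),V\bigr)$ where $\mu$ is the multiplication map $H^0(\CO_{\PP^k}(1))\times V\to W$, and computes $\codim(\mcZ_t)=ta+b$ on each stratum. The essential external input is \cite[Lemma 2.8]{debarre:98a}, which bounds the dimension of the stratum of such hyperplanes by $t(k-t+1)+\sum_i\binom{\delta_i+t-1}{t-1}-1$; this is what replaces your unproven transversality. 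The bad locus then has codimension at least $\min_{1\leq t\leq k+1}\gamma(t)$ with
\[
\gamma(t)=b+1+t(a+t-k-1)-\sum_{i=1}^r\binom{\delta_i+t-1}{t-1},
\]
and the role of your trichotomy is revealed as a discrete convexity statement: under conditions (1) or (2) the second forward difference of $\gamma$ is nonpositive (so $\gamma$ is concave), and under condition (3) the first forward difference is nonnegative (so $\gamma$ is nondecreasing); either way the minimum occurs at $t=1$ or $t=k+1$, where \eqref{eqn:1} and \eqref{eqn:2} respectively give $\gamma\geq 1$. To complete your proposal you would need to prove the general-position assertion for the coupled kernels, and the known route to doing so is exactly this stratified codimension count.
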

The special case of this lemma when $b=0$ is used classically for showing the non-emptyness of Fano schemes of complete intersections in projective space, see \cite[Proof of Theorem 6.28]{EH} and \cite{hochster} for the case of hypersurfaces ($r=1$) and \cite[\S2]{debarre:98a} for complete intersections.
We adapt the argument of \cite[\S2]{debarre:98a} to include the case $b>0$:
\begin{proof}[Proof of Lemma \ref{lemma:surj}]
For ease of notation, set 
	$V=\bigoplus_{i=1}^r H^0(\PP^k,\CO_{\PP^k}(\delta_i-1))$ and
	$W=\bigoplus_{i=1}^r H^0(\PP^k,\CO_{\PP^k}(\delta_i))$.
Likewise, set
	\[
\mcH=\bigoplus_{i=1}^r H^0(\PP^k,\CO_{\PP^k}(\delta_i-1))^a\times 
	\bigoplus_{i=1}^r H^0(\PP^k,\CO_{\PP^k}(\delta_i))^b.
\]
We are trying to show that there exist $((h_{ij}),(h'_{ij}))\in\mcH$ for which $\lambda$ is surjective.
	
	Let 
\[
	\mu:H^0(\PP^k,\CO_{\PP^k}(1))\times V\to W
\]
be the natural multiplication map. For any hyperplane $H\subset W$, denote by $\mu^{-1}(H)$
the set
\[
	\mu^{-1}(H)=\{v\in V\ |\ v\cdot H^0(\PP^k,\CO_{\PP^k}(1))\subset H\}.
\]
From its definition, it is apparent that the codimension of $\mu^{-1}(H)$ is at most $k+1=\dim H^0(\PP^k,\CO_{\PP^k}(1))$.
For $t=1,\ldots,k+1$, we define the set
\begin{align*}
	\mcL_t=\{\psi \in W^*\ | \codim(\mu^{-1}(\ker \psi),V)=t\}
\end{align*}
and the subset $\mcZ_t$ of
	$\mcH\times	
	\PP(\mcL_t)
$
by
\begin{align*}
	\mcZ_t=\left\{((h_{ij}),(h_{ij}'),\overline \psi) \in \mcH\times\PP(\mcL_t)
	\ \Bigg| \begin{array}{l}
	\forall j=1,\ldots,a,\ (h_{ij})\in \mu^{-1}(\ker \psi)\\
	\forall j=1,\ldots,b,\ (h'_{ij})\in \ker \psi\\
\end{array}
\right\}.
\end{align*}

The subset $Z$ 
of $\mcH$ 
for which the map $\lambda$ is not surjective is the union of the projections of $\mcZ_t$ to $\mcH$.
We thus obtain that
\[
	\codim(Z,\mcH) \geq \min_{t=1,\ldots,k+1} \codim (\mcZ_t,\mcH\times \PP(\mcL_t))-\dim \PP(\mcL_t).
\]

On the one hand, the conditions in the definition of $\mcZ_t$ are clearly independent, with the condition $(h_{ij}\in\mu^{-1}(\ker\psi))$ having codimension $t$, and $(h_{ij}')\in\ker \psi$ having codimension $1$. Hence,
\[
	\codim (\mcZ_t,\mcH\times \PP(\mcL_t))=t\cdot a+b.
\]
On the other hand, \cite[Lemma 2.8]{debarre:98a} states that for $1\leq t \leq k+1$,
\[
	\dim \PP(\mcL_t)\leq t(k-t+1)+\sum_{i=1}^r {\delta_i+t-1\choose t-1}-1.
\]
Combining, we obtain that 
\[
	\codim(Z,\mcH) \geq \min_{t=1,\ldots,k+1} 
\gamma(t)
\]
where
\[\gamma(t):=
b+1
+	t(a+t-k-1)-\sum_{i=1}^r {\delta_i+t-1\choose t-1}.
\]

The first forward difference of $\gamma$ is 
\[\gamma(t+1)-\gamma(t)=2t+a-k -\sum_{i: \delta_i\geq 1}{\delta_i+t-1\choose t}.\]
Likewise, the second order forward difference is
\[\gamma(t+2)-2\gamma(t+1)+\gamma(t)=2-\sum_{i: \delta_i\geq 2} {\delta_i+t-1\choose t+1}.\]

If at least two $\delta_i$ are greater than or equal to two, or one $\delta_i\geq 3$, then the second forward difference is always non-positive for $t=1,\ldots,k+1$, so $\gamma$ is concave. If neither of these conditions is met, but $a-k-\#\{i\ |\ \delta_i=1\}\geq 0$, then the first forward difference is always non-negative, so $\gamma$ is non-decreasing. In either case, we can compute $\min_{t=1,\ldots,k+1} 
\gamma(t)$ by only evaluating at $t=1$ and $t=k+1$ and taking the smaller value. 

Using the inequalities \eqref{eqn:1} and \eqref{eqn:2}, we then obtain
that 
\[
\codim(Z,\mcH)\geq 1
\]
implying that for general choice of $(h_{ij})$ and $(h_{ij}')$, $\lambda$ is surjective.
\end{proof}

\subsection{The Non-Emptyness Theorem}
We continue with notation established previously. Our main result of this section is the following:
\begin{thm}\label{thm:empty}
		Let $X\subset Y_\A$ be a complete intersection of type $\balpha$, and fix a maximal Cayley structure $\pi:\tau\to \Delta_\ell$.
	Assume the following:
	\begin{enumerate}
		\item $Y_\A$ is nonsingular;\label{as:0}
		\item \eqref{eqn:ddagger} is satisfied;\label{as:1}
		\item There is an $\ell$-dimensional $\pi$-face $\sigma'$ of $\tau$ with $k$-dimensional face $\sigma\prec \sigma'$ such that for every $i=1,\ldots,r$ and every $F\in\F_\sigma\setminus \F_{\sigma'}$, $\alpha_i-[D_F]$ restricts surjectively with respect to $\sigma$;\label{as:3}
		\item $\phi(\A,\pi,\balpha,k)\geq 0$;\label{as:4}
		\item $\delta_i\geq 0$ for $i=1,\ldots,r$;\label{as:2}
		\item $\dim \tau-2k-r\geq 0$;\label{as:5}
		\item Either $\delta_i\geq 3$ for some $i$, $\delta_i\geq 2$ for at least two indices $i$, or $\ell-2k-\#\{i\ |\ \delta_i=1\}\geq 0$.\label{as:6}
	\end{enumerate}
	Then the variety $V_{\pi,k}$ is non-empty.
	Furthermore, it is scheme-theoretically a union of irreducible components of $\bF_k(X)$.
\end{thm}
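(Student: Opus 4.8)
The plan is to treat the two assertions separately: non-emptiness will use hypotheses \ref{as:1}--\ref{as:6}, whereas the fact that $V_{\pi,k}$ is a union of components of $\bF_k(X)$ will follow almost formally from the nonsingularity of $\bF_k(Y_\A)$. For non-emptiness I would first exhibit a single complete intersection $X_0$ of type $\balpha$ witnessing the conclusion at a torus-fixed point. Using hypothesis \ref{as:3}, fix the chain $\sigma\prec\sigma'$ and put $L=L_\sigma\cong\PP^k$; by Theorem \ref{thm:piface} and \eqref{eqn:n1} the bundle $\N_{L/Y_\A}$ splits as $\CO_L(1)^{\oplus(\ell-k)}\oplus\CO_L(0)^{\oplus(\dim\tau-\ell)}\oplus\mcR$ with $\mcR$ of negative degrees. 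The aim is to choose $X_0\ni L$ so that the restriction map \eqref{eqn:gn} is surjective; by Proposition \ref{prop:surj} this is equivalent to the $R(L)$-module generated by the $\rho(g_i^F)$, $F\in\F_\sigma\setminus\F_\tau$, containing $\bigoplus_i H^0(L,\CO_L(\delta_i))$, and, once achieved, it forces $\dim H^0(L,\N_{L/X_0})=\phi$, so that $[L]$ lies in the $V_{\pi,k}$ of $X_0$ with tangent space of the expected dimension.

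The algebraic heart is then an application of Lemma \ref{lemma:surj} with $a=\ell-k$ (the $\CO_L(1)$-summands, indexed by $F\in\F_\sigma\setminus\F_{\sigma'}$) and $b=\dim\tau-\ell$ (the $\CO_L(0)$-summands, indexed by $F\in\F_{\sigma'}\setminus\F_\tau$). A short bookkeeping check identifies the lemma's inequality \eqref{eqn:1} with hypothesis \ref{as:5}, its inequality \eqref{eqn:2} with the inequality $\phi\geq 0$ of hypothesis \ref{as:4}, and its three-way alternative with hypothesis \ref{as:6}, while hypothesis \ref{as:2} supplies $\delta_i\geq 0$. The lemma then yields surjectivity of the relevant multiplication map for a general choice of multipliers, and it remains to realize such general multipliers as genuine restrictions $\rho(g_i^F)$: hypothesis \ref{as:3} makes each $\alpha_i-[D_F]$ with $F\in\F_\sigma\setminus\F_{\sigma'}$ restrict surjectively onto $H^0(L,\CO_L(\delta_i-1))$. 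This last lifting step is the point I expect to be the main obstacle: matching the combinatorics of $\F_\sigma,\F_{\sigma'},\F_\tau$ correctly and, above all, realizing the generic multipliers as global sections on $Y_\A$, the $\CO_L(0)$-summands (not literally covered by \ref{as:3}) being the delicate case, which I would try to settle from \eqref{eqn:ddagger} applied to the Cayley structure $\pi|_{F\cap\tau}\leq\pi$.

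To pass from the single $X_0$ to an arbitrary $X$, I would use the incidence scheme $\Phi$ of \eqref{eqn:projectionsFromIncidenceScheme}. Under \eqref{eqn:ddagger}, Lemma \ref{lemma:phi} makes $\Phi$ irreducible, and smooth since $Z_{\pi,k}$ is smooth by Corollary \ref{cor:smooth}; its projection $\pr_1$ is proper because $Z_{\pi,k}$ is projective, and the fiber of $\pr_1$ over a point is precisely the associated $V_{\pi,k}$. The surjectivity of \eqref{eqn:gn} at $(X_0,[L])$ means exactly that this fiber is smooth of dimension $\phi$ at $[L]$; since $\Phi$ is smooth, $\pr_1$ is then smooth (submersive) at $(X_0,[L])$, hence open, hence dominant, and being proper it is therefore surjective. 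Consequently $V_{\pi,k}=\pr_1^{-1}(X)$ is non-empty for every complete intersection $X$ of type $\balpha$.

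Finally, for the union-of-components claim, hypothesis \ref{as:0} and Corollary \ref{cor:smooth} make the ambient Fano scheme $\bF_k(Y_\A)$ nonsingular, hence a disjoint union of its irreducible components $Z_{\pi',k}$, each of which is in particular open and closed. Intersecting with the closed subscheme $\bF_k(X)$ shows that $V_{\pi,k}=\bF_k(X)\cap Z_{\pi,k}$ is open and closed in $\bF_k(X)$, and therefore, with its induced scheme structure, is a union of connected, and hence of irreducible, components of $\bF_k(X)$. This last paragraph is clean once Corollary \ref{cor:smooth} is in hand; the real content of the theorem is the non-emptiness argument, and within it the multiplier-lifting step flagged above.
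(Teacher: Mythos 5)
Your proposal is correct and mirrors the paper's proof essentially step for step: the same application of Lemma~\ref{lemma:surj} with $a=\ell-k$, $b=\dim\tau-\ell$ and the same matching of its hypotheses to \ref{as:4}--\ref{as:6}, the same lifting of the generic multipliers (with the $\CO_L(1)$-summands handled by hypothesis~\ref{as:3} and the $\CO_L(0)$-summands by \eqref{eqn:ddagger}, exactly as the paper does in invoking assumption~\ref{as:1}), the same use of Proposition~\ref{prop:surj} to get $\dim H^0(L,\N_{L/X_0})=\phi$, the same passage to arbitrary $X$ via the incidence scheme of Lemma~\ref{lemma:phi}, and the same open-and-closed argument from Corollary~\ref{cor:smooth} for the union-of-components claim. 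The only deviation is minor: where you deduce surjectivity of $\pr_1$ from smoothness of the morphism at $(X_0,[L])$ together with openness and properness, the paper's Lemma~\ref{lemma:fiber} reaches the same conclusion by a contradiction argument using the relative cotangent sheaf and generic smoothness; both rest on the identical tangent-space computation, so this is a cosmetic rather than a substantive difference.
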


\begin{proof}
	Taking 
	$a=\ell-k$, $b=\dim \tau-\ell$, assumptions \ref{as:4}, \ref{as:2}, \ref{as:5}, and \ref{as:6} allow us to apply Lemma \ref{lemma:surj} to produce 
$h_{ij}\in H^0(\PP^k,\CO_{\PP^k}(\delta_i-1))$ and 
$h_{ij}'\in H^0(\PP^k,\CO_{\PP^k}(\delta_i))$ such that the map $\lambda$ of Lemma \ref{lemma:surj} is surjective.

Let $\sigma$ and $\sigma'$ be $\pi$-faces as in assumption \ref{as:3}. 
By assumption \ref{as:0}, Theorem \ref{thm:piface} and the following discussion imply that there are $a=\ell-k$ faces $F\in \F_\sigma$ with $\CO(D_F)$ restricting to $\CO_{L_\sigma}(1)$, and $b=\dim \tau-\ell$ faces $F\in \F_\sigma$ with $\CO(D_F)$ restricting to $\CO_{L_\sigma}(1)$. 

By assumption \ref{as:3}, the map $R(Y_\A)_{\alpha_i-[D_F]}\to H^0(L_\sigma,\CO_{L_\sigma}(1))$ for $F\in \F_{\sigma}\setminus\F_{\sigma'}$ is surjective, so after ordering these faces we may lift the $(h_{ij})$ to $g_i^F\in R(Y_\A)_{\alpha_i-[D_F]}$. Likewise, by assumption \ref{as:1}, the map 
$R(Y_\A)_{\alpha_i-[D_F]}\to H^0(L_\sigma,\CO_{L_\sigma}(0))$ for $F\in \F_{\sigma'}\setminus\F_\tau$ is surjective, so after ordering these faces we may lift the $(h_{ij}')$ to $g_i^F\in R(Y_\A)_{\alpha_i-[D_F]}$. 
We now set 
\[
	g_i=\sum_{F\in \F_\sigma\setminus \F_\tau} y_F\cdot g_i^F
\]
and consider the corresponding divisors $D_i$ and complete intersection $X$.

Now, $\rho(g_i^F)$ is just $h_{ij}$ or $h_{ij}'$ for appropriate choice of index $j$, and our construction of the $h_{ij},h_{ij}'$ guarantees by Proposition \ref{prop:surj} that $\dim H^0(L,\N_{L/X})=\phi$ for $L=L_\sigma$.

The non-emptyness of $V_{\pi,k}$ for any complete intersection of type $\balpha$ now follows from Lemma \ref{lemma:fiber} below. 
For the second claim, $\widehat{V}_{\pi,k}$ is scheme-theoretically a union of irreducible components of $\bF_k(X)$, since $\widehat{Z}_{\pi,k}$ is an irreducible component of $\bF_k(Y_\A)$, and $\bF_k(X)\subseteq \bF_k(Y_\A)$. However, Corollary \ref{cor:smooth} implies that $\widehat{Z}_{\pi,k}=Z_{\pi,k}$, so
$\widehat{V}_{\pi,k}=V_{\pi,k}$.
\end{proof}

Recall from \S\ref{sec:phi} the incidence scheme $\Phi$ of all tuples $(D_1,\ldots,D_r,[L])$ in $|\alpha_1|\times \cdots\times |\alpha_r|\times Z_{\pi,k}$ such that $L\subset D_1\cap \cdots \cap D_r \subseteq Y_\A$.
The scheme $\Phi$ has projections to each component of the product $(|\alpha_1|\times\cdots\times|\alpha_r|) \times Z_{\pi,k}$, as seen in \eqref{eqn:projectionsFromIncidenceScheme}. We now provide sufficient criteria for the projection onto $|\alpha_1|\times\cdots\times|\alpha_r|$ to be surjective.

\begin{lemma}\label{lemma:fiber}
	Suppose that $Y_\A$ is nonsingular, \eqref{eqn:ddagger} holds, and that  there is some $X\subset Y_\A$ a complete intersection of type $\balpha$ and a $k$-plane $L\subset X$, $[L]\in Z_{\pi,k}$ such that 
\[
	\dim H^0(L,\N_{L/X})\leq \phi=\dim \Phi-\sum_{i=1}^r \dim |\alpha_i|.
\]
Then the map $\pr_1$ is surjective, that is, \emph{every} complete intersection of type $\balpha$ contains a $k$-plane from $Z_{\pi,k}$.
\end{lemma}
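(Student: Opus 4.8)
The plan is to deduce surjectivity of $\pr_1$ from a dimension count built around the fact that the fiber of $\pr_1$ over the point $p=(D_1,\dots,D_r)$ cutting out $X$ is precisely $V_{\pi,k}$, whose local dimension at $[L]$ the hypothesis lets us bound from above.

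First I would record the global structure of $\Phi$. Since $Y_\A$ is nonsingular, Corollary \ref{cor:smooth} gives that $Z_{\pi,k}$ is smooth; as \eqref{eqn:ddagger} holds, Lemma \ref{lemma:phi} then shows that $\Phi$ is a smooth irreducible variety of dimension $\sum_{i=1}^r\dim|\alpha_i|+\phi$. Because $Z_{\pi,k}$ is projective and $\Phi$ is a closed subscheme of $|\alpha_1|\times\cdots\times|\alpha_r|\times Z_{\pi,k}$, the projection $\pr_1$ is proper, so its image is closed; it therefore suffices to prove that $\pr_1$ is dominant.

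The heart of the argument is a dimension estimate at the distinguished point $w=(D_1,\dots,D_r,[L])\in\Phi$. On one hand, the scheme-theoretic fiber $\pr_1^{-1}(p)$ is exactly $V_{\pi,k}$, a subscheme of $\bF_k(X)$; since the Zariski tangent space of $\bF_k(X)$ at $[L]$ is $H^0(L,\N_{L/X})$, the local dimension of the fiber at $[L]$ satisfies $\dim_{[L]}\pr_1^{-1}(p)\le \dim H^0(L,\N_{L/X})\le\phi$ by hypothesis. On the other hand, applying the fiber-dimension theorem (in its local form) to the dominant morphism $\Phi\to\overline{\pr_1(\Phi)}$ yields the lower bound $\dim_{[L]}\pr_1^{-1}(p)\ge\dim\Phi-\dim\overline{\pr_1(\Phi)}$. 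Feeding in $\dim\Phi=\sum_i\dim|\alpha_i|+\phi$ and combining the two inequalities forces $\dim\overline{\pr_1(\Phi)}\ge\sum_i\dim|\alpha_i|$, which equals $\dim\left(|\alpha_1|\times\cdots\times|\alpha_r|\right)$. As this ambient product is irreducible, $\overline{\pr_1(\Phi)}$ must be all of it, so $\pr_1$ is dominant, and then proper, hence surjective.

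The main subtlety lies at the interface of the two bounds. I must verify that $\pr_1^{-1}(p)$ is identified \emph{scheme-theoretically} with $V_{\pi,k}$, so that its Zariski tangent space at $[L]$ genuinely embeds into $H^0(L,\N_{L/X})$ and the upper bound on local dimension is legitimate; and I must invoke the \emph{local} lower-bound form of the fiber-dimension theorem at the single point $w$, rather than the weaker statement about a generic fiber, since $p$ need not be a general point of the image. Once these two points are in place, the remaining manipulations are just bookkeeping with the dimension formula of Lemma \ref{lemma:phi} and the properness of $\pr_1$.
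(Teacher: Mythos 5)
Your proof is correct, but it takes a genuinely different---and more elementary---route than the paper's. The paper argues by contradiction: assuming $\pr_1$ is not surjective onto $B=\pr_1(\Phi)$, it introduces the relative cotangent sheaf $\Omega_{\Phi/B}$, uses semicontinuity and Nakayama's lemma to show that the locus $U\subset\Phi$ of points whose fiberwise tangent space has dimension less than $q=\dim\Phi-\dim B$ is open, observes that your distinguished point lies in $U$ so that $U$ dominates $B$, and then invokes generic smoothness to produce a \emph{general} fiber that is smooth with a point of tangent-space dimension less than $q$, hence of dimension less than $q$, contradicting $\dim \Phi\leq \dim B+\dim(\text{general fiber})$. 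You instead stay entirely at the special point $w$, playing the tangent-space upper bound $\dim_{[L]}\pr_1^{-1}(p)\leq \dim H^0(L,\N_{L/X})\leq\phi$ against the \emph{pointwise} lower bound of the fiber-dimension theorem for $\Phi\to\overline{\pr_1(\Phi)}$ (every component of a fiber through a given point has dimension at least $\dim\Phi-\dim\overline{\pr_1(\Phi)}$; see \cite[Exercise II.3.22(b)]{hartshorne}, which is in essence Krull's height theorem applied to a system of parameters at $p$). Both arguments rest on the same inputs: the scheme-theoretic identification $\pr_1^{-1}(p)=V_{\pi,k}$ (which the paper itself asserts in the proof of Theorem \ref{thm:expected}), the identification of $T_{[L]}\bF_k(X)$ with $H^0(L,\N_{L/X})$, and Lemma \ref{lemma:phi} giving that $\Phi$ is an irreducible variety of dimension $\phi+\sum_i\dim|\alpha_i|$ under \eqref{eqn:ddagger}. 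What your route buys: it is shorter, it avoids generic smoothness (so it does not use characteristic zero), and in fact it never needs smoothness of $Z_{\pi,k}$ or $\Phi$ at all---only irreducibility---so the nonsingularity of $Y_\A$ is superfluous to your argument, whereas the paper's proof genuinely consumes it (via Corollary \ref{cor:smooth} and Lemma \ref{lemma:phi}) to make generic smoothness available. The price the paper pays for using only the weaker, generic-fiber form of the dimension bound is precisely the cotangent-sheaf machinery needed to transport the tangent-space estimate from $w$ to a general fiber. The two subtleties you flag at the end are exactly the right ones, and both are available as you describe.
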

\begin{proof}
Let $B$ be the image of $\pr_1$; it is a closed subscheme of $|\alpha_1|\times\cdots\times|\alpha_r|$. Suppose that $\pr_1$ is not surjective, that is, 
$\dim B<\sum_{i=1}^r \dim |\alpha_i|$. We consider the relative cotangent sheaf $\Omega_{\Phi/B}$; let $U\subset \Phi$ consist of those points $x$ for which $\Omega_{\Phi/B,x}$ is generated by fewer than $q=\dim \Phi-\dim B$ elements.
By e.g.~\cite[Exercise II.5.8(a)]{hartshorne}, the set $U$ is an open subset of $\Phi$.

For $b\in Y$, set $\Phi_b=\pr_1^{-1}(b)$.
We claim that the closed points of $U$ may also be described as the set those $x\in \Phi$ such that $\dim T_{\Phi_b,x}<q$, where $b=\pr_1(x)$ and $T_{\Phi_b,x}$ is the tangent space of the scheme $\Phi_b$ at the point $x$.
Indeed, locally
\[
	T_{\Phi_b,x}^*\cong \Omega_{\Phi_b,x}\otimes \CO_{\Phi_b,x}/\mfm_{\Phi_b,x}\cong \Omega_{\Phi/B,x}\otimes \CO_{\Phi,x}/\mfm_{\Phi,x},
\]
where $\mfm_{\Phi_b,x}$ and $\mfm_{\Phi,x}$ denote the maximal ideal of $\Phi_b$ and $\Phi$ at $x$.
The first isomorphism follows from e.g. \cite[Proposition II.8.7]{hartshorne}, and the second from \cite[Proposition II.8.2a]{hartshorne}.
The claim regarding $U$ now follows from Nakayama's lemma.

By the hypothesis of the lemma and the interpretation of $H^0(L,\N_{L/X})$ as the tangent space of the Fano scheme at a point, the set $U$ is thus non-empty.
But since $U$ is a non-empty open set of $\Phi$, it dominates $B$, and it follows from our above description of $U$ that a general fiber $\Phi_b$ of $\pr_1$ contains a point $x$ such that the dimension of $\Phi_b$ at $x$ is 
less than $\dim \Phi-\dim B$.
Furthermore, since $Y_\A$ is nonsingular, so is $\bF_k(Y_\A)$ (Corollary \ref{cor:smooth}), and hence also $\Phi$ (Lemma \ref{lemma:phi}).
By generic smoothness (\cite[Corollary  III.10.7]{hartshorne}) the fiber $\Phi_b$ is smooth (and equidimensional), so its dimension is less than $\dim \Phi-\dim B$. But this is impossible, since it would imply $\dim \Phi <(\dim \Phi -\dim B)+\dim B=\dim \Phi$.

We conclude that $\pr_1$ must have been surjective.
\end{proof}

We can simplify the hypotheses of Theorem \ref{thm:empty} by making slightly stronger assumptions.
\begin{cor}\label{cor:empty}
		Let $X\subset Y_\A$ be a complete intersection of type $\balpha$, and fix a maximal Cayley structure $\pi:\tau\to \Delta_\ell$.
		Assume that $Y_\A$ is nonsingular, and for each torus invariant prime divisor $P$ and each $i=1,\ldots,r$, $\alpha_i$ and $\alpha_i-[P]$ are basepoint free.
		Assume further that $\phi(\A,\pi,\balpha,k)\geq 0$ and 
		$\dim \tau-2k-r\geq 0$. Finally, assume that either $\delta_i\geq 3$ for some $i$, $\delta_i\geq 2$ for at least two indices $i$, or $\ell-2k-\#\{i\ |\ \delta_i=1\}\geq 0$.
	Then the variety $V_{\pi,k}$ is non-empty.
	Furthermore, it is scheme-theoretically a union of irreducible components of $\bF_k(X)$.
	\end{cor}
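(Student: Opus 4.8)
The plan is to deduce Corollary \ref{cor:empty} directly from Theorem \ref{thm:empty}, by verifying that the more uniform hypotheses assumed here imply each of the seven conditions \ref{as:0}--\ref{as:6} of that theorem. Once all seven hold, the conclusion---non-emptiness of $V_{\pi,k}$ together with its being scheme-theoretically a union of irreducible components of $\bF_k(X)$---is immediate, so no new geometric input beyond Theorem \ref{thm:empty} is needed; the entire argument is a matter of translating the stronger basepoint-freeness assumptions into the individual conditions.

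Several conditions transfer with no work: condition \ref{as:0} (nonsingularity of $Y_\A$), condition \ref{as:4} ($\phi\geq 0$), condition \ref{as:5} ($\dim\tau-2k-r\geq 0$), and condition \ref{as:6} (the trichotomy on the $\delta_i$) are assumed outright. For condition \ref{as:1} I would use that each $\alpha_i$ is basepoint free: Lemma \ref{lemma:surjects}, applied to every Cayley structure $\pi'\leq\pi$ of length at least $k$, shows that $\alpha_i$ restricts surjectively with respect to $\pi'$, which is exactly \eqref{eqn:ddagger}. For condition \ref{as:2} I would again use basepoint-freeness of $\alpha_i$: the restriction of a basepoint-free divisor to $L_\pi\cong\PP^\ell$ is basepoint free, hence effective, so by Corollary \ref{cor:restrict} the restriction degree $\delta_i$ is non-negative.

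The only condition requiring genuine (if modest) work is \ref{as:3}, and this is where I expect the main obstacle to lie. First I would produce the required faces: by \cite[Proposition 4.3]{toric} there is a face $\sigma'\prec\tau$ on which $\pi$ is bijective onto $\Delta_\ell$, so $\sigma'$ is an $\ell$-dimensional $\pi$-face, and since $k\leq\ell$ I may pick any $k$-dimensional face $\sigma\prec\sigma'$. The key observation is then that for $F\in\F_\sigma\setminus\F_{\sigma'}$ the divisor $D_F$ is a torus invariant \emph{prime} divisor, so the standing hypothesis that $\alpha_i-[P]$ is basepoint free for every such $P$ applies to give that $\alpha_i-[D_F]$ is basepoint free. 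Because $\sigma$ is a $\pi$-face, $\pi|_\sigma$ is a bijective Cayley structure with $L_{\pi|_\sigma}=Y_\sigma=L_\sigma$, so restricting surjectively with respect to $\sigma$ coincides with restricting surjectively with respect to $\pi|_\sigma$; applying Lemma \ref{lemma:surjects} to the basepoint-free class $\alpha_i-[D_F]$ and the Cayley structure $\pi|_\sigma$ then establishes condition \ref{as:3}.

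With conditions \ref{as:0}--\ref{as:6} all verified, Theorem \ref{thm:empty} applies and finishes the proof. The care points worth highlighting in the write-up are the identification of the relevant $D_F$ as torus invariant prime divisors (so that the $\alpha_i-[P]$ hypothesis is exactly what is needed), and the remark that surjective restriction ``with respect to $\sigma$'' is the same as with respect to the bijective Cayley structure $\pi|_\sigma$, so that Lemma \ref{lemma:surjects} is legitimately applicable.
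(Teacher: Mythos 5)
Your proposal is correct and follows exactly the paper's argument: the paper likewise proves the corollary by checking the hypotheses of Theorem \ref{thm:empty}, using Lemma \ref{lemma:surjects} and basepoint-freeness of the $\alpha_i$ for assumptions \ref{as:1} and \ref{as:2}, and basepoint-freeness of the $\alpha_i-[P]$ for assumption \ref{as:3}, with the remaining hypotheses assumed outright. Your write-up simply supplies details the paper leaves implicit (existence of the $\pi$-faces $\sigma\prec\sigma'$ via \cite[Proposition 4.3]{toric}, the identification of the relevant $D_F$ as torus invariant prime divisors, and the equivalence of restricting surjectively with respect to $\sigma$ and with respect to the bijective Cayley structure $\pi|_\sigma$), all of which are accurate.
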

	\begin{proof}
		We must show that the hypotheses for Theorem \ref{thm:empty}
		are satisfied. The basepoint freeness of the $\alpha_i$ implies assumptions \ref{as:1} and \ref{as:2} by Lemma \ref{lemma:surjects}. Similarly, the basepoint freeness of $\alpha_i-P$ implies assumption \ref{as:3}. The remaining hypotheses for the theorem are the same as in the corollary.
	\end{proof}

	\begin{cor}\label{cor:xsmooth}
		Assume that the hypotheses of Theorem \ref{thm:empty} or Corollary \ref{cor:empty} are satisfied. If $X\subset Y_\A$ is sufficiently general, then $V_{\pi,k}$ is smooth of dimension $\phi$.
		\end{cor}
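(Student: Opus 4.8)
The plan is to realize $V_{\pi,k}$ as a fiber of the projection $\pr_1\colon \Phi \to |\alpha_1|\times\cdots\times|\alpha_r|$ from the incidence scheme and then invoke generic smoothness. First I would record that, under the stated hypotheses, $\Phi$ is smooth. Indeed, since $Y_\A$ is nonsingular (assumption \ref{as:0}), Corollary \ref{cor:smooth} guarantees that $Z_{\pi,k}$ is smooth; and since \eqref{eqn:ddagger} holds (assumption \ref{as:1}, or automatic from basepoint freeness in the setting of Corollary \ref{cor:empty}), the final clause of Lemma \ref{lemma:phi} shows that $\Phi$ is smooth and irreducible of dimension $\phi + \sum_{i=1}^r \dim|\alpha_i|$.

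Next I would use the non-emptiness already established in Theorem \ref{thm:empty}: the variety $V_{\pi,k}$ is non-empty for \emph{every} complete intersection of type $\balpha$. As observed in the proof of Theorem \ref{thm:expected}, for $X$ cut out by $(D_1,\ldots,D_r)$ one has $V_{\pi,k}=\pr_1^{-1}((D_1,\ldots,D_r))$ scheme-theoretically, so non-emptiness for every $X$ is precisely the statement that $\pr_1$ is surjective. In particular $\pr_1$ is dominant with $\dim \pr_1(\Phi)=\sum_{i=1}^r \dim|\alpha_i|$, whence the general fiber has dimension $\dim\Phi - \sum_{i=1}^r\dim|\alpha_i| = \phi$.

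The last step is generic smoothness. We work over an algebraically closed field of characteristic zero, $\Phi$ is smooth by the first step, and $\pr_1$ is a dominant morphism to the irreducible variety $|\alpha_1|\times\cdots\times|\alpha_r|$. By \cite[Corollary III.10.7]{hartshorne} there is a dense open subset over which $\pr_1$ is a smooth morphism; its fibers are then smooth and of pure dimension equal to the relative dimension $\phi$. Hence for $X$ sufficiently general the fiber $V_{\pi,k}=\pr_1^{-1}(X)$ is smooth of dimension $\phi$, as claimed.

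I do not anticipate a genuine obstacle: the substantive input—surjectivity of $\pr_1$—is supplied by Theorem \ref{thm:empty}, and the smoothness of $\Phi$ is supplied by Lemma \ref{lemma:phi} together with Corollary \ref{cor:smooth}, after which generic smoothness is immediate. The only point demanding a little care is verifying that the hypotheses of Theorem \ref{thm:empty} or Corollary \ref{cor:empty} really do furnish both the nonsingularity of $Y_\A$ and condition \eqref{eqn:ddagger} needed to apply these two results; once that is checked, the characteristic-zero assumption makes the concluding application of generic smoothness routine.
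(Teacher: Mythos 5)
Your proposal is correct and follows essentially the same route as the paper's proof: smoothness of $\Phi$ via Corollary \ref{cor:smooth} and Lemma \ref{lemma:phi}, surjectivity of $\pr_1$ from Theorem \ref{thm:empty}, and generic smoothness in characteristic zero. The only cosmetic difference is that the paper cites Theorem \ref{thm:expected} for the dimension of the general fiber, whereas you rederive it directly from $\dim\Phi$ and the surjectivity of $\pr_1$, which is the same computation.
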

		\begin{proof}
Assuming the hypotheses of Theorem \ref{thm:empty} or Corollary \ref{cor:empty}, we have in particular by Corollary \ref{cor:smooth} that $Z_{\pi,k}$, hence $\Phi$ is smooth by Lemma \ref{lemma:phi}. By Theorem \ref{thm:empty} combined with Theorem \ref{thm:expected}, a general fiber $V_{\pi,k}$ of $\pr_1$ has dimension $\phi$; by generic smoothness, it will also be smooth.
		\end{proof}

		\begin{ex}[$\Bl_{\PP^2}\PP^5$]\label{ex:4}
			We continue the example of $\Bl_{\PP^2}\PP^5$ from Examples \ref{ex:1}, \ref{ex:2}, and \ref{ex:3}. The torus invariant prime divisors have classes $H$, $E$, and $H-E$. As mentioned before, since $H-E$ and $H$ are both basepoint free, the divisor $\alpha_1=8H-3E$ is basepoint free, and so are the divisors $7H-3E$, $8H-4E$, and $7H-4E$. 
			For both $\pi_1$ and $\pi_2$ and $k=1$, the expected dimension is zero, and in both cases, $\dim \tau-2k-r\geq 0$. Since $\delta_1$ for the two Cayley structures $\pi_i$ is equal to $5$ and $3$, respectively, Corollaries \ref{cor:empty} and \ref{cor:xsmooth} apply in both cases. We conclude that for sufficiently general $X\subset \Bl_{\PP^2}\PP^3$ of class $8H-3E$, $\bF_1(X)$ is the disjoint union of $V_{\pi_1,1}$ and $V_{\pi_2,1}$, both of which are non-empty, zero-dimensional, and smooth.
				\end{ex}

	\begin{rem}
		The now classical results on Fano schemes for sufficiently general hypersurfaces and complete intersections $X\subset \PP^n$ typically include the statement that if the expected dimension is at least $1$, and $X$ is sufficiently general, then $\bF_k(X)$ is connected, see e.g.~\cite[Theorem 2.1.c]{debarre:98a}. The methods used to show this require understanding the locus of the incidence variety $\Phi$ where the first projection $\pr_1$ is smooth. In essence this is done by considering the sequence \eqref{eqn:normal} and subsequent analysis for \emph{all} possible linear subspaces $L\subset \PP^n$. This is possible since every linear subspace of $\PP^n$ is a complete intersection. By contrast, we are only able to carry out our analysis for the special $L=L_\sigma$, because general $L\subset Y_\A$ might not be a complete intersection.
	\end{rem}

\section{Counting Linear Subspaces}\label{sec:count}
In the classical setting of a degree $d$ hypersurface $X$ in $\PP^n$, $X$ determines a section of the bundle 
$\Sym^d \cS^*$, where $\cS$ is the tautological subbundle on the Grassmannian $\Gr(k+1,n+1)$. 
The Fano scheme $\bF_k(X)$ is the zero locus in $\Gr(k+1,n+1)$ of this section. In particular, if $\bF_k(X)$ has the expected dimension, then its class in the Chow ring of the Grassmannian is just the top Chern class of $\Sym^d \cS^*$. This allows one to count for example the $27$ lines on a cubic surface and $2875$ lines on a quintic threefold using intersection theory.
See e.g.~\cite[Proposition 6.4]{EH} for details.

Here, we adapt this approach to our setting.
As before, $X$ will be a complete intersection of type $\balpha$ in the toric variety $Y_\A$. We fix a maximal Cayley structure $\pi:\tau\to \Delta_\ell$.
We will be interested in $V_{\pi,k}\subseteq \bF_k(X)$.

	Set 
	\[\A_\pi=\{u_0+\ldots+u_\ell \in M\ |\ u_i\in\pi^{-1}(e_i)\cap \tau\}.\]
The  variety $Z_\pi=Z_{\pi,\ell}$, considered in its reduced structure, is the projective toric variety $Y_{\A_\pi}$ \cite[Theorem 6.2]{toric}. 
The natural torus acting on $Z_\pi$ is the torus $T_\pi$ whose character lattice $M_\pi$ is generated by differences of elements of $\A_\pi$.

In what follows, we will encode a number of globally generated line bundles $\mcL_i$ on $Z_\pi$ by giving a subset  $\G_i\subset M_\pi$ such that as $w$ ranges over the elements of $\G_i$, $\chi^w$ are the local generators of $\mcL_i$. Since the bundles $\mcL_i$ are globally generated, they are already determined by the set $\G_i$: for any open $U\subset Z_\pi$, \begin{equation}\label{eqn:sheaf}\mcL_i(U)=\CO_{Z_\pi}(U)\cdot\{\chi^w\ |\ w\in \G_i\}\subset \KK(Z_{\pi}).\end{equation}
\begin{rem}
	Given an \emph{arbitrary} set $\G_i\subset M_\pi$, there is no \emph{a priori} guarantee that the sheaf defined in \eqref{eqn:sheaf} is invertible.
	However, by starting with a globally generated line bundle and taking $\G_i$ to be the characters corresponding to its global generators, we see \emph{a postiori} that the sheaves we describe in Proposition \ref{prop:bundle} below are indeed line bundles on $Z_\pi$.
\end{rem}

Fix a $\pi$-face $\sigma=\{v_0,\ldots,v_\ell\}$ of $\tau$, with $\pi(v_i)=e_i$.
Set	\begin{equation}\label{eqn:Gi}
		\G_i= \pi^{-1}(e_i)-v_i \subset M_\pi.
	\end{equation}
\begin{prop}\label{prop:bundle}
The universal bundle $\E$ of $\ell+1$-planes in $\KK^{\#\A}$ whose projectivization is contained in $Y_\A$ is the direct sum of line bundles
	\begin{equation}\label{eqn:E}
		\E=\bigoplus_{i=0}^\ell \mcL_i^{*},
	\end{equation}
	where each $\mcL_i$ is the line bundle on $Y_{\A_\pi}$ globally generated by the sections $\{\chi^w\ |\ w\in \G_i\}$
	for  $\G_i$ as defined in \eqref{eqn:Gi}.
\end{prop}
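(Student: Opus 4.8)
The plan is to understand the universal bundle $\E$ fiber by fiber, using the explicit toric structure of $Z_\pi=Y_{\A_\pi}$ and the description of linear spaces $L_{\pi'}$ from Theorem~\ref{thm:toric}. Recall that a point of $Z_\pi$ corresponds to an $(\ell+1)$-plane $W\subset \KK^{\#\A}$ whose projectivization is a linear space of the form $L$ contained in some $L_{\pi'}$ (up to torus action); the fiber of $\E$ over such a point is exactly $W$. The key structural observation is that the Cayley structure $\pi:\tau\to\Delta_\ell$ groups the homogeneous coordinates $x_v$ of $Y_\A$ according to the fiber $\pi^{-1}(e_i)$ to which $v$ belongs, and that $L_\pi$ itself is cut out by the equations $x_v=x_{v'}$ whenever $\pi(v)=\pi(v')$, together with $x_v=0$ for $v\notin\tau$. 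This block structure indexed by $i=0,\ldots,\ell$ is precisely what should produce the direct sum decomposition \eqref{eqn:E}.

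First I would make the direct sum decomposition explicit at the level of the torus fixed point $[L_\sigma]$, and more generally over the open torus orbit. For the fixed $\pi$-face $\sigma=\{v_0,\ldots,v_\ell\}$, the plane $L_\sigma$ corresponds to the coordinate subspace $W_\sigma$ spanned by the basis vectors $e_{v_0},\ldots,e_{v_\ell}$ of $\KK^{\#\A}$. As one moves in the torus orbit of $[L_\sigma]$ inside $Z_\pi$, the $i$-th spanning vector deforms within the coordinate subspace indexed by $\pi^{-1}(e_i)$, so $W$ decomposes as a direct sum $\bigoplus_{i=0}^\ell W_i$ where $W_i$ is a line in $\KK^{\pi^{-1}(e_i)}$. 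This gives a splitting $\E=\bigoplus_{i=0}^\ell \E_i$ into line bundles, where $\E_i$ is the subbundle whose fiber is the line $W_i$. The content of the proposition is then to identify each $\E_i^*$ with the specified line bundle $\mcL_i$.

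Next I would pin down $\mcL_i$ by computing its local generators. Choosing $v_i$ as a reference point in $\pi^{-1}(e_i)$ and using the toric coordinates on $Y_{\A_\pi}$, the line $W_i$ is spanned by a vector whose entries, in the coordinate $x_v$ for $v\in\pi^{-1}(e_i)$, are monomials $\chi^{v-v_i}$ — and the exponents $v-v_i$ are exactly the elements of $\G_i=\pi^{-1}(e_i)-v_i$ from \eqref{eqn:Gi}. Dualizing the tautological subbundle $\E_i$, the transition/clutching data is governed by these characters, so $\E_i^*$ is globally generated by the sections $\{\chi^w\mid w\in\G_i\}$, which is the definition of $\mcL_i$ via \eqref{eqn:sheaf}. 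The remark preceding the proposition already guarantees that the resulting sheaf is genuinely a line bundle, since it arises from a globally generated bundle, so I would not need to verify invertibility by hand.

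The main obstacle I anticipate is bookkeeping the identification between the abstract subbundle $\E_i$ and the combinatorial data $\G_i\subset M_\pi$ carefully enough to get the dualization and the precise choice of reference vertex $v_i$ correct, rather than off by a sign or a translation. Concretely, the subtle point is to check that the assignment $v\mapsto \chi^{v-v_i}$ really defines a well-defined section of the \emph{dual} bundle over all of $Z_\pi$ (not just the open orbit), and that the elements of $\A_\pi$ as in the definition $\A_\pi=\{u_0+\cdots+u_\ell\mid u_i\in\pi^{-1}(e_i)\cap\tau\}$ match up with the lattice $M_\pi$ in such a way that the $\G_i$ land in $M_\pi$ and generate the claimed bundle. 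Once the reference point $v_i$ and the duality convention are fixed consistently, the decomposition $\E=\bigoplus_i\mcL_i^*$ follows directly from the orbit-wise splitting together with this character computation.
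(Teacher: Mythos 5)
Your overall strategy---exploit the block structure of the universal family indexed by the fibers $\pi^{-1}(e_i)$, split $\E$ accordingly, and identify each summand via the characters $\chi^{v-v_i}$ for $v\in\pi^{-1}(e_i)$---is the same as the paper's, and your identification of the generators of $\mcL_i$ (and the check that $\G_i\subset M_\pi$) is correct. But there is a genuine gap exactly where you point to it yourself: you establish the splitting only at the fixed point $[L_\sigma]$ and over the open torus orbit, and then assert that the global decomposition ``follows directly from the orbit-wise splitting together with this character computation.'' It does not. Over the open orbit the splitting statement is essentially vacuous, since the orbit is a torus and every vector bundle on it is trivial; the entire content of the proposition is that the splitting extends over the toric boundary of $Z_\pi$. (Compare: the tangent bundle of $\PP^2$ is trivial, hence split, over the open orbit, but does not split globally.) Your proposal names this extension as the ``main obstacle'' but never supplies the argument, so the step you deferred is precisely the substance of the proof.

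The paper closes this gap by working chart by chart rather than orbit by orbit: for \emph{every} $\ell$-dimensional $\pi$-face $\sigma$, the universal family over the torus-invariant affine chart $U_\sigma$ of $Z_\pi$ is given by an explicit matrix (from \cite[\S4]{toric}) whose rows are supported in the coordinate blocks indexed by $\pi^{-1}(e_i)$, with entries $\chi^{u-v_i}$; since these charts cover $Z_\pi$, the row-wise splitting is defined everywhere and the chartwise decompositions glue. The identification of $\mcL_i$ then proceeds as you describe, by viewing the $i$th row as a rational section of $\mcL_i^*$ and reading off its coordinates as generators. Alternatively, you could repair your own argument intrinsically: by Theorem \ref{thm:toric}, every plane parametrized by $Z_\pi$ is a torus translate of some $L_{\pi'}$ with $\pi'\leq\pi$, so your pointwise decomposition $W=\bigoplus_i W_i$ with $W_i\subset\KK^{\pi^{-1}(e_i)}$ holds at \emph{every} closed point of $Z_\pi$, not just on the open orbit; then $\E_i:=\E\cap\bigl(\KK^{\pi^{-1}(e_i)}\otimes\CO_{Z_\pi}\bigr)$ is a coherent subsheaf of constant fiber rank one, hence a line subbundle, and the natural map $\bigoplus_i\E_i\to\E$ is a morphism of bundles of equal rank that is an isomorphism on every fiber, hence an isomorphism. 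Either route supplies the missing globalization; without one of them the proof is incomplete.
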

\begin{proof}
Let $n+1=\#\A$.
On the chart $U_\sigma$ of $Z_\pi$ containing the torus fixed point $[L_\sigma]$ corresponding to $\sigma$, the family of $\ell+1$-planes in $\KK^{n+1}$  parametrized by $Z_\pi$ is given by the rowspan of
\[
\begin{blockarray}{cccccc}
& \pi(u)=e_0 &  \pi(u)=e_1 &\cdots&  \pi(u)=e_\ell & u\notin \tau \\
\begin{block}{c(c|c|c|c|c)}
   &  \chi^{u-v_0}  &0 & &0& 0  \\
   & 0&  \chi^{u-v_{1}} &   &0 & 0  \\
  & \vdots       &&\ddots &&\vdots   \\
 &  0&0&  &\chi^{u-v_{\ell}} & 0  \\
\end{block}
\end{blockarray}\ 
\]
(see \cite[\S4]{toric}).
From the structure of this matrix, it is apparent that on $U_\sigma$, the universal bundle $\E$ decomposes as a direct sum of line bundles $\mcL_i^*$, corresponding to the rows of the matrix.
By varying $\sigma$, we see that the above decomposition glues to give a global decomposition of $\E$ as a direct sum of line bundles.
Each row of the above matrix gives a subbundle of $\CO_{Z_\pi}^{n+1}$, so dually
 we obtain a surjection $\CO_{Z_\pi}^{n+1}\to \mcL_i$, and $\mcL_i$ is globally generated.

 We have now seen that $\E$ splits as a direct sum of globally generated line bundles $\mcL_i$. It remains to describe these bundles $\mcL_i$.
 Fix an index $i$.  Since for $u\in\pi^{-1}(e_i)$, $\chi^{u-v_i}$ is a rational function on $Z_\pi$, the $i$th row of the above matrix gives a rational section $s$ of $\mcL_i^*$ (which is regular on $U_\sigma$). This rational section identifies $\mcL_i^*$ (and hence $\mcL_i$) as a subsheaf of $\KK(Z_{\pi})$, the sheaf of rational functions on $Z_\pi$.
Restricting to $U_\sigma$, we have $\CO_{U_\sigma}\to (\mcL_i^*)_{|U_\sigma}\to \CO_{U_\sigma}^{n+1}$, with the first map given by the section $s$, and the second obtained by viewing $\mcL_i^*$ as a subbundle of $\CO_{U_\sigma}^{n+1}$.
Taking the dual of $\CO_{U_\sigma}\to (\mcL_i^*)_{|U_\sigma}\to \CO_{U_\sigma}^{n+1}$, we obtain $\CO_{U_{\sigma}}^{n+1}\to \CO_{U_\sigma}$. Viewing $\mcL_i$ as a subsheaf of $\KK(Z_\pi)$, the generators of $\mcL_i$ are exactly the coordinates of $s$, that is,
 $\chi^{u-v_i}$ for $u\in \pi^{-1}(e_i)$.
\end{proof}
\begin{rem}
By choosing a different $\pi$-face, we will obtain isomorphic line bundles $\mcL_i$, albeit with a different $T_\pi$-linearization.
\end{rem}

\begin{lemma}\label{lemma:va}
Assume that $Y_\A$ is nonsingular.
Then the semigroup generated by the set $\{ w -v_j \ | \ w \in \pi^{-1}(e_j)\}$ 
is independent of $j=0,\ldots,\ell$.
\end{lemma}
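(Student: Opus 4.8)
The plan is to show that, for every $j$, the semigroup generated by $\{w-v_j\mid w\in\pi^{-1}(e_j)\}$ equals one fixed free semigroup, namely $\NN a_1+\cdots+\NN a_p$, where $a_1,\ldots,a_p$ are primitive lattice vectors determined below. First I would translate so that $v_0=0$ and invoke the smoothness of $Y_\A$ at the vertex $v_0$: a subset of $\A-v_0$ is a $\ZZ$-basis of $M$, and exactly as in the proof of Theorem~\ref{thm:piface} I may take it to consist of the primitive edge generators $e_1,\ldots,e_\ell$ of $\sigma$, the primitive generators $a_1,\ldots,a_p$ of the edges lying in $\tau\setminus\sigma$, and the generators $b_1,\ldots,b_q$ of the edges leaving $\tau$. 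Since $\pi$ is maximal one has $\pi'(a_i)=0$, so $a_1,\ldots,a_p$ are precisely the edge directions at $v_0$ remaining in the fiber $\pi^{-1}(e_0)$; as these vectors are part of a $\ZZ$-basis of $M$ and span $\ker\pi'$ over $\QQ$, I obtain $\ker\pi'\cap M=\langle a_1,\ldots,a_p\rangle_\ZZ$.

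Second, I would prove that the primitive fiber-edge directions at each vertex $v_j$ are again the \emph{same} vectors $a_1,\ldots,a_p$. This is exactly the computation in the proof of Theorem~\ref{thm:piface}: the $2$-face of $\conv(\tau)$ spanned by $v_0$, $v_j$, and $v_0+a_i$ determines an edge at $v_j$ with primitive generator $a_i+\lambda_i(v_j-v_0)$ for some $\lambda_i\geq-1$, and the maximality of $\pi$ forces $\lambda_i=0$. Since a smooth vertex of $\conv(\tau)$ has exactly $\dim\tau=\ell+p$ edges, of which $\ell$ join $v_j$ to the remaining vertices of $\sigma$, the $p$ edges at $v_j$ lying in the fiber are precisely $a_1,\ldots,a_p$. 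In particular $v_j+a_i\in\pi^{-1}(e_j)$, so $a_i\in\{w-v_j\mid w\in\pi^{-1}(e_j)\}$ for every $i$ and $j$.

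Third, I would pin down the semigroup itself. Extending $\pi$ to an affine surjection $\conv(\tau)\to\conv(\Delta_\ell)$, the preimage of the vertex $e_j$ is the face $P_j=\conv(\pi^{-1}(e_j))$ of $\conv(\tau)$, and $v_j$ is a vertex of $P_j$ whose edge directions are the fiber edges $a_1,\ldots,a_p$ found above. Hence $P_j-v_j$ lies in the tangent cone $\pos(a_1,\ldots,a_p)$, so that $\pi^{-1}(e_j)-v_j\subseteq\pos(a_1,\ldots,a_p)$; on the other hand $\pi^{-1}(e_j)-v_j\subseteq\ker\pi'\cap M=\langle a_1,\ldots,a_p\rangle_\ZZ$. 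Because $a_1,\ldots,a_p$ are part of a $\ZZ$-basis of $M$, the cone $\pos(a_1,\ldots,a_p)$ is unimodular and $\pos(a_1,\ldots,a_p)\cap\langle a_1,\ldots,a_p\rangle_\ZZ=\NN a_1+\cdots+\NN a_p$. Thus every generator $w-v_j$ lies in $\NN a_1+\cdots+\NN a_p$, while by the second step the generating set already contains $a_1,\ldots,a_p$; so the generated semigroup is exactly $\NN a_1+\cdots+\NN a_p$, which does not depend on $j$.

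I expect the main obstacle to be the independence of the fiber-edge directions from $j$ in the second step: this is the only point where the maximality of $\pi$ is used essentially, and it rests on the $\lambda_i=0$ computation from Theorem~\ref{thm:piface}. Everything else is formal, being a consequence of smoothness (unimodularity of the edge cone) together with the fact that each fiber $P_j$ is a face of the Cayley polytope. The only additional technical care needed is to record explicitly that $\ker\pi'\cap M=\langle a_1,\ldots,a_p\rangle_\ZZ$ and that $v_j$ is genuinely a vertex of $P_j$, both of which are immediate from the chosen basis.
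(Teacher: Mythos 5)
Your proof is correct, and it rests on the same two pillars as the paper's --- smoothness of $Y_\A$ at a vertex plus maximality of $\pi$ --- but the second half runs along a genuinely different track. The paper never examines the edge structure at the other vertices $v_j$: using smoothness it writes any $u\in\pi^{-1}(e_j)$ \emph{uniquely} as $u=v_0+\sum_i c_i(w_i-v_0)+\sum_i b_i(v_i-v_0)$ in the free semigroup generated by $\tau-v_0$, uses a single extension-of-Cayley-structure argument to force all $w_i\in\pi^{-1}(e_0)$, and then applies $\pi$ to this expression to conclude $b_i=\delta_{ij}$, so that $u-v_j=\sum_i c_i(w_i-v_0)$ lies in the semigroup generated by $\pi^{-1}(e_0)-v_0$; exchanging the roles of $0$ and $j$ finishes the argument. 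You instead import the $\lambda_i=0$ computation from the proof of Theorem \ref{thm:piface} to show that the fiber-edge directions at \emph{every} $v_j$ are literally the same vectors $a_1,\ldots,a_p$, and then sandwich each semigroup between the cone $\pos(a_1,\ldots,a_p)$ and the lattice the $a_i$ generate. Your route buys an explicit identification of the common semigroup as the free semigroup $\NN a_1+\cdots+\NN a_p$ and makes the independence of $j$ geometrically transparent; the cost is more convex-geometric machinery (fibers as faces of $\conv\tau$, tangent cones generated by edges, saturation of sublattices spanned by part of a basis) and a dependence on the internals of Theorem \ref{thm:piface}, whereas the paper's coefficient computation is shorter and self-contained.

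One step you should make explicit: in your second step you pass from ``$a_i$ is an edge direction of $\conv\tau$ at $v_j$'' to ``$v_j+a_i\in\pi^{-1}(e_j)$''. This requires $v_j+a_i\in\A$, which is not automatic, since $\A$ need not contain every lattice point of $\conv\A$. It does follow from smoothness of $Y_\A$ at $v_j$: the semigroup generated by $\A-v_j$ is free, and its free generators (the primitive edge directions) must themselves be elements of $\A-v_j$. But simplicity of the polytope $\conv\tau$ --- which is all your step invokes --- would not suffice. Without this membership, the inclusion of $\NN a_1+\cdots+\NN a_p$ into the semigroup generated by $\{w-v_j\mid w\in\pi^{-1}(e_j)\}$ in your third step has no justification, so the final equality would only be a one-sided containment.
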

\begin{proof}
	We adapt arguments from \cite[Proof of Theorem 7.3]{toric}. Let $q=\dim \tau$. 
	Considering the vertex $v_0$ of $\A$ and using the smoothness of $Y_\A$, we can conclude that there are $w_1,\ldots,w_{q-\ell}\in \tau$ such that 
\[
	w_1-v_0,\ldots,w_{q-\ell}-v_0,v_1-v_0,\ldots,v_{\ell}-v_0
\]
are a basis for the semigroup generated by $\tau-v_0$.
We order the $w_i$ so that exactly $w_1,\ldots,w_p\in \pi^{-1}(e_0)$. Here $p$ is clearly the dimension of the convex hull of $\pi^{-1}(e_0)$.

Our first claim is that $p=q-\ell$. Indeed, if not, we can extend the Cayley structure $\pi$ to $\pi':\tau\to \Delta_{\ell+1}$ by sending 
\begin{equation}\label{eqn:u}
	u=v_0+\sum_{i=1}^{q-\ell} a_i(w_i-v_0)+\sum_{i=1}^\ell b_i(v_i-v_0)\in\tau
\end{equation}
to
\[
	e_0+a_{q-\ell}(e_{\ell+1}-e_0)+\sum_{i=1}^\ell b_i(e_i-e_0).
\]
This is clearly affine, and the image certainly contains $\Delta_{\ell+1}$. But by applying $\pi$ to \eqref{eqn:u}, we see that $\sum_i b_i+\sum_{i>p} a_i\leq 1$, hence the image equals exactly $\Delta_{\ell+1}$. The construction of $\pi'$ contradicts the maximality of $\pi: \tau \to \Delta_\ell$, so henceforth we assume that all $w_i\in \pi^{-1}(e_0)$.

We now consider $u\in \pi^{-1}(e_j)$. Then by writing $u$ uniquely as in Equation~\eqref{eqn:u} and applying $\pi$ again in this case, we see that $b_i = \delta_{ij}$. We can then conclude that every element of $\pi^{-1}(e_j)-v_j$ is in the semigroup generated by $\pi^{-1}(e_0)-v_0$.
Reversing the roles of $0$ and $j$, we obtain the claim of the lemma.
\end{proof}

\begin{thm}\label{thm:chern}
Assume that $Y_\A$ is nonsingular and all $\delta_i\geq 0$. Let $\cS$ be the tautological subbundle on $\Gr_{Z_\pi}(k+1,\E)$, with $\E$ as in \eqref{eqn:E}. Then $V_{\pi,k}$ may be identified with the zero locus of a global section of the bundle
\[
\bigoplus_{j=1}^r\Sym^{\delta_j} \cS^*.
\]
In particular, if the hypotheses of Theorem \ref{thm:empty} or Corollary \ref{cor:empty} hold, then the class of $V_{\pi,k}$ in the Chow ring of $\Gr_{Z_\pi}(k+1,\E)$ is
\begin{equation}\label{eqn:chern}
\prod_{i=1}^r c_{k+\delta_i\choose k}(\Sym^{\delta_i}\cS^*).
\end{equation}
\end{thm}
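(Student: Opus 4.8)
The plan is to realize $V_{\pi,k}$ as the zero scheme of an explicit global section of $\bigoplus_{i}\Sym^{\delta_i}\cS^*$ and then invoke the standard fact that the class of the zero locus of a regular section of a vector bundle is the top Chern class of that bundle. First I would recall that, via the moduli description of Theorem \ref{thm:toric} together with the splitting of the universal bundle in Proposition \ref{prop:bundle}, the component $Z_{\pi,k}$ is identified with the relative Grassmannian $\Gr_{Z_\pi}(k+1,\E)$: a point is a rank-$(k+1)$ subspace $W\subset \E_z$ for $z\in Z_\pi$, corresponding to the $k$-plane $L=\PP(W)$. Since $Y_\A$ is nonsingular, Corollary \ref{cor:smooth} guarantees $Z_{\pi,k}$ is reduced, so this identification is as schemes. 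Under it, the fiber of $\cS^*$ at $[L]$ is $W^\ast=H^0(L,\CO_L(1))$, whence the fiber of $\Sym^{\delta_i}\cS^*$ is $\Sym^{\delta_i}W^\ast=H^0(L,\CO_L(\delta_i))$.

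Next I would build the section. Writing $X=D_1\cap\cdots\cap D_r$ with $D_i$ a divisor of class $\alpha_i$, the defining section of $\CO_{Y_\A}(\alpha_i)$ restricts on each $L$ to an element of $H^0(L,\CO_L((D_i)_{|L}))$; composing with the canonical isomorphism $\CO_L((D_i)_{|L})\to \CO_L(\delta_i)$ of Remark \ref{rem:canon} yields an element of $H^0(L,\CO_L(\delta_i))=(\Sym^{\delta_i}\cS^*)_{|[L]}$. As $[L]$ varies these glue, the isomorphisms of Remark \ref{rem:canon} being defined uniformly over $Z_{\pi,\ell}$ and hence over $Z_{\pi,k}$, into a global section $\psi_i$ of $\Sym^{\delta_i}\cS^*$; I set $\psi=(\psi_i)_{i=1}^r$.

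The key step is to verify that the zero scheme of $\psi$ equals $V_{\pi,k}$ scheme-theoretically. Set-theoretically this is immediate, since $\psi([L])=0$ exactly when $L\subset D_i$ for all $i$, i.e.\ when $L\subset X$. For the scheme structure I would argue that, under the identification above, the ideal cutting out $\bF_k(X)$ inside $\bF_k(Y_\A)$ pulls back precisely to the ideal generated by the components of $\psi$; this is the toric analogue of the classical description of the Fano scheme as the vanishing of the restricted equations (cf.\ \cite{EH}), and is where the bookkeeping of the canonical trivializations of Remark \ref{rem:canon} must be carried out with care. I expect this scheme-theoretic matching to be the main obstacle, as it requires comparing the intrinsic Fano-scheme structure with the structure coming from $\psi$.

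Finally I would compute the class. The bundle $\bigoplus_{i}\Sym^{\delta_i}\cS^*$ has rank $\sum_i \binom{k+\delta_i}{k}$, while $\dim \Gr_{Z_\pi}(k+1,\E)=\dim Z_{\pi,k}=\dim\tau-\ell+(k+1)(\ell-k)$. Under the hypotheses of Theorem \ref{thm:empty} or Corollary \ref{cor:empty}, Corollary \ref{cor:xsmooth} shows that for general $X$ the scheme $V_{\pi,k}$ is smooth of dimension $\phi$, so its codimension equals $\sum_i\binom{k+\delta_i}{k}$, the rank of the bundle. Hence $\psi$ is a regular section, and the standard formula for the class of the zero locus of such a section gives $[V_{\pi,k}]=c_{\mathrm{top}}\bigl(\bigoplus_i\Sym^{\delta_i}\cS^*\bigr)$. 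Applying the Whitney formula to the direct sum, $c_{\mathrm{top}}\bigl(\bigoplus_i \Sym^{\delta_i}\cS^*\bigr)=\prod_i c_{\mathrm{top}}(\Sym^{\delta_i}\cS^*)=\prod_{i=1}^r c_{\binom{k+\delta_i}{k}}(\Sym^{\delta_i}\cS^*)$, as claimed; since this top Chern class is independent of the chosen section, the identity holds in the Chow ring.
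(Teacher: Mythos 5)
Your overall strategy is the same as the paper's (build the section from the defining equations via the canonical isomorphisms of Remark \ref{rem:canon}, identify its zero scheme with $V_{\pi,k}$, then apply the top Chern class formula), but two essential steps are missing. The first is the identification $Z_{\pi,k}\cong \Gr_{Z_\pi}(k+1,\E)$, which you treat as an immediate consequence of Theorem \ref{thm:toric}, Proposition \ref{prop:bundle}, and Corollary \ref{cor:smooth}. It is not: Proposition \ref{prop:bundle} only produces $\E$ over $Z_\pi=Z_{\pi,\ell}$, and the natural map $\Gr_{Z_\pi}(k+1,\E)\to Z_{\pi,k}$ sending $(x,[W])$ to $[\PP(W)]$ is an isomorphism only if every $k$-plane parametrized by $Z_{\pi,k}$ lies on a \emph{unique} $\ell$-plane parametrized by $Z_\pi$. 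Surjectivity is easy, but the uniqueness is the real content: the paper proves it using the local charts of $Z_{\pi,k}$ from \cite{toric} together with Lemma \ref{lemma:va}, a semigroup statement whose proof relies on both the smoothness of $Y_\A$ and the maximality of $\pi$; it then invokes normality of $Z_{\pi,k}$ (via Corollary \ref{cor:smooth}) and Zariski's Main Theorem to upgrade the bijective morphism to an isomorphism. None of this appears in your write-up, so the assertion that "a point of $Z_{\pi,k}$ is a rank-$(k+1)$ subspace $W\subset\E_z$" is exactly the thing that needs proof.

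The second gap is one you flag yourself: the scheme-theoretic equality of the zero scheme of $\psi$ with $V_{\pi,k}$ is precisely the step you say you "would argue" and "expect \dots to be the main obstacle," i.e.\ it is deferred rather than done. The paper carries it out concretely, working on a Pl\"ucker chart $U\cong U'\times\bA^{(k+1)(\ell-k)}$ over a torus-invariant chart of $Z_\pi$: the tautological subbundle is given by an explicit matrix in Cox coordinates, each $g_j$ is expanded into a polynomial $H_j(b_0,\ldots,b_k)$, and the components of the section are identified with the coefficients of $H_j$, which are the local equations of $\bF_k(X)\cap Z_{\pi,k}$. Finally, a logical slip at the end: the observation that "the top Chern class is independent of the chosen section" does not let you pass from general $X$ (where you invoke Corollary \ref{cor:xsmooth}) to all $X$ covered by the statement; the relevant fact, and the one the paper uses, is that $[V_{\pi,k}]$ equals the top Chern class whenever the zero scheme has codimension equal to the rank of $\bigoplus_j\Sym^{\delta_j}\cS^*$ --- smoothness of $V_{\pi,k}$ is not needed for this, only the correct codimension, which is what the hypotheses of Theorem \ref{thm:empty} or Corollary \ref{cor:empty} are invoked to guarantee.
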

\begin{proof}
	Since $Y_\A$ is nonsingular, so is $\bF_k(Y_\A)$ (Theorem \ref{cor:smooth}) and $Z_{\pi,k}$ is scheme-theoretically a smooth irreducible component of $\bF_k(Y_\A)$. We first claim that 
	\[
		Z_{\pi,k}\cong \Gr_{Z_\pi}(k+1,\E).
	\]
	Indeed,
	there is a natural morphism
	\[
		\Gr_{Z_\pi}(k+1,\E)\to Z_{\pi,k}
	\]
	sending $(x,[W])\in Z_{\pi}\times \Gr(k+1,\E_x)$ to $[\PP(W)]\in\bF_k(Y_\A)$. Here $\E_x$ is the fiber of the bundle $\E$ at $x$, and  $W$ is a $k+1$-dimensional linear space contained in $\E_x$.
This morphism is clearly surjective.

We claim that this morphism is also injective. Indeed, we need to show that for any $k$-plane $L$ corresponding to a point of $Z_{\pi,k}$, there is a unique $\ell$-plane $L'$ with $[L']\in Z_\pi$ and $L\subset L'$. Assume without loss of generality that $L$ is contained in the Pl\"ucker chart of the Grassmannian containing the torus fixed point corresponding to a $\pi$-face $\{v_0,\ldots,v_k\}$, with $\pi(v_i)=e_i$.  The local description of $Z_{\pi,k}$ from \cite[\S4]{toric} (in particular Equations 1 and 2 of loc.~cit.) shows that there is indeed a unique $L'$  as long as for each fixed $j$, the semigroup generated by $\{w-v_j\ |\ w\in \pi^{-1}(e_j)\}$ contains  $w-v_{j'}$ for all $j'$ and $w\in\pi^{-1}(e_{j'})$. 
But this criterion now follows from Lemma \ref{lemma:va}.

We know $Z_{\pi,k}$ is normal, since it is nonsingular by Corollary \ref{cor:smooth}.
It thus follows from 
Zariski's Main Theorem
that the map 
		$\Gr_{Z_\pi}(k+1,\E)\to Z_{\pi,k}$ is an isomorphism.
		We will henceforth identify $Z_{\pi,k}$ with $\Gr_{Z_\pi}(k+1,\E)$.

		We now adapt the argument from the proof of  \cite[Proposition 6.4]{EH}. 
		Consider some point $y=(x,[W])$ of $\Gr_{Z_\pi}(k+1,\E)$ as above.
	Fix sections $g_j\in H^0(Y_\A,\CO(D_j))$ determining $X$, where $D_j$ is a torus invariant divisor of class $\alpha_j$ whose support does not contain $Y_\tau$.	
		The fiber of $\Sym^{\delta_j} \cS^*$ at $y$ is 
		$H^0(L,\CO_L(\delta_j))$ where $L=\PP(W)$. Sending $(x,[W])$ to the image of $g_j$ in  $H^0(L,\CO_L(\delta_j))$ thus gives a section $s_j$ of $\Sym^{\delta_j} \cS^*$;
here we are using the canonical isomorphisms of Remark \ref{rem:canon}
for the maps $H^0(Y_\A,\CO(D_j))\to H^0(L,\CO_L(\delta_j))$.
Combining these sections $s_j$, we obtain the desired section $s=(s_j)$ of 
\[
\bigoplus_{j=1}^r\Sym^{\delta_j} \cS^*.
\]
	
This may be seen more explicitly by working locally. First fix a torus invariant affine chart $U'$ of $Z_\pi$ (corresponding to a $\pi$-face $\sigma=\{v_0,\ldots,v_\ell\}$) and then fix a Pl\"ucker chart 
\[U\cong U'\times\bA^{(k+1)(\ell-k)}\] of the Grassmannian over $U'$ (corresponding to selecting $k+1$ of the $v_i$).
	We assume without loss of generality that we are on the chart obtained by choosing $v_0,\ldots,v_k$, and $\pi(v_i)=e_i$.
	On the affine chart $U'$, the bundle $\E$ trivializes, and the bundle $\CO_U^{k+1}\cong \cS_{|U}\subset \CO_U^{\#\A}$ is given by the rows of the matrix
\begin{equation*}\label{eqn:rowspan}
B=\begin{blockarray}{cccccc}
& \pi(u)=e_0 &  \pi(u)=e_1 &  \pi(u)=e_k & \pi(u)=e_j,\ j>k & u\notin \tau \\
\begin{block}{c(c|c|c|c|c)}
   &  \chi^{u-v_0}  &0  &0&\lambda_{0j}\chi^{u-v_j} & 0  \\
   & 0&  \chi^{u-v_{1}}    &0&\lambda_{1j}\chi^{u-v_j} & 0  \\
  & \vdots       & &&\vdots & \vdots  \\
 &  0&0&  \chi^{u-v_{k}} &\lambda_{kj}\chi^{u-v_j} & 0  \\
\end{block}
\end{blockarray}
\end{equation*}
where $\lambda_{ij}$ are coordinates for $\bA^{(k+1)(\ell-k)}$, see \cite[\S4]{toric}. 
Let $b_0,\ldots,b_k$ be the standard generators of $\CO_U^{k+1}$; under the isomorphism $\CO_U^{k+1}\cong \cS_{|U}$, they give generators for $\cS_{|U}$.

For each $j$, let $\cdj{v_i}\in M$ be such that in a neighborhood of the torus fixed point of $Y_\A$ corresponding to $v_i$, $D_j=\Div \chi^{-\cdj{v_i}}$ as in \S\ref{sec:restrict}.
The Laurent polynomial $g_j$ can be written as the product of $\chi^{\cdj{v_0}}$ with a polynomial in $\chi^{u-v_0}$ as $u$ ranges over the elements of $\A$. Substituting the $u$th column of 
$(b_0/b_0,b_1/b_0,\ldots,b_k/b_0)\cdot B$ in for each $\chi^{u-v_0}$ in $b_0^{\delta_j}\cdot g_j\cdot \chi^{-\cdj{v_0}}$ leads to a polynomial $H_j(b_0,\ldots,b_k)$ of degree $\delta_j$. 
This is the same as substituting the $u$th column of 
$(b_0/b_i,b_1/b_i,\ldots,b_k/b_i)\cdot B$ in for $\chi^{u-v_i}$ in $b_i^{\delta_j}\cdot g_j\cdot \chi^{-\cdj{v_i}}$.

We now come to the explicit description of $s_j$. Viewed dually, $s_j$ is locally given as a map $\Sym^{\delta_j}\CO_U^{k+1}\to \CO_U$.
This map is obtained by sending any degree $\delta_j$ monomial in the $b_i$ to its coefficient in $H_j(b_0,\ldots,b_k)$.

Via this description, we see that the zero locus of $s$ is exactly the scheme $V_{\pi,k}$.
Indeed, working locally on $Y_\A$ near the fixed point corresponding to $v_0$, the divisor $D_j+\Div g_j$ is locally cut out by $f=\chi^{-\cdj{v_0}}\cdot g_j$. 
The condition that the above substitution of $b_0^{\delta_j}g_j\chi^{-\cdj{v_0}}$ vanishes is exactly the condition that $f$ vanishes on a given linear space.

 Finally, if the hypotheses of Theorem \ref{thm:empty} or Corollary \ref{cor:empty} hold, then $V_{\pi,k}$ has the expected dimension, so its codimension in $Z_{\pi,k}$ is just the rank of $\bigoplus_{j=1}^r \Sym^{\delta_j} \cS^*$. Hence, the zero locus of the section $s$ has the correct codimension, and its class in the Chow ring is given by the top Chern class of the bundle. This, in turn, is a product of the stated Chern classes by the Whitney sum formula.
\end{proof}
\begin{rem}
In the case that Corollary \ref{cor:xsmooth} holds and the expected dimension of $V_{\pi,k}$ is zero, 
the number of $k$-planes contained in $X$ of the type parametrized by $Z_{\pi,k}$ is exactly equal to 
the degree of \eqref{eqn:chern}.
\end{rem}

\begin{rem}
Assume that $Y_\A$ is nonsingular.
The Chow ring of $\Gr_{Z_\pi}(k+1,\E)$, and the class of $V_{\pi,k}$, can be understood quite explicitly.
Indeed, $Z_\pi$ is a nonsingular toric variety; its Chow ring is an explicit Stanley-Reisner ring, see e.g.~\cite[\S12.5]{cls}. An explicit description of the Chow ring of $\Gr_{Z_\pi}(k+1,\E)$ follows from \cite{sgs}. To compute the class of $V_{\pi,k}$, one  uses the splitting principal and Whitney's formula. All these computations may be carried out using the \texttt{Macaulay2} \cite{M2} packages  \texttt{Schubert2} \cite{schubert2} coupled with \texttt{NormalToricVarieties} \cite{tv}.
\end{rem}

\begin{figure}
	\begin{tikzpicture}
\draw (0,0)--(2,0)--(0,2)--(0,0);
\draw[fill] (0,0) circle[radius=.07cm];
\draw[fill] (1,0) circle[radius=.07cm];
\draw[fill] (2,0) circle[radius=.07cm];
\draw[fill] (1,1) circle[radius=.07cm];
\draw[fill] (0,2) circle[radius=.07cm];
\draw[fill] (0,1) circle[radius=.07cm];
\end{tikzpicture}
\hspace{1cm}
	\begin{tikzpicture}
\draw (0,0)--(1,0)--(0,1)--(0,0);
\draw[fill] (0,0) circle[radius=.07cm];
\draw[fill] (1,0) circle[radius=.07cm];
\draw[fill] (0,1) circle[radius=.07cm];
\end{tikzpicture}
\hspace{1cm}
	\begin{tikzpicture}
\draw (0,0)--(1,0)--(0,1)--(0,0);
\draw[fill] (0,0) circle[radius=.07cm];
\draw[fill] (1,0) circle[radius=.07cm];
\draw[fill] (0,1) circle[radius=.07cm];
\end{tikzpicture}
\hspace{1cm}
	\begin{tikzpicture}
\draw (0,0)--(1,0)--(0,1)--(0,0);
\draw[fill] (0,0) circle[radius=.07cm];
\draw[fill] (1,0) circle[radius=.07cm];
\draw[fill] (0,1) circle[radius=.07cm];
\end{tikzpicture}

	\caption{Fibers of $\pi_1$ for Example~\ref{ex:5}}\label{fig:fibers}
\end{figure}

\begin{ex}[$\Bl_{\PP^2}\PP^5$]\label{ex:5}
	We conclude the example of $\Bl_{\PP^2}\PP^5$ from Examples \ref{ex:1}, \ref{ex:2}, \ref{ex:3}, and \ref{ex:4}.
	We first consider the Cayley structure $\pi_1$. Then $Z_{\pi_1,3}=\PP^2$ with universal bundle $\E_1=\CO(-2)\oplus\CO(-1)\oplus\CO(-1)\oplus\CO(-1)$.
	Indeed, the fibers of $\pi_1$ consist of the columns of the following four matrices:
\[
\left({\begin{array}{ccccccccccccccc}
0&0&2&0&1&1\\       
0&2&0&1&0&1\\
2&0&0&1&1&0\\
0&0&0&0&0&0\\
0&0&0&0&0&0
\end{array}}\right),\quad
\left({\begin{array}{ccccccccccccccc}
0&0&1\\
0&1&0\\
1&0&0\\
0&0&0\\
0&0&0
\end{array}}\right),\quad
\left({\begin{array}{ccccccccccccccc}
0&0&1\\
0&1&0\\
1&0&0\\
1&1&1\\
0&0&0
\end{array}}\right),\quad
\left({\begin{array}{ccccccccccccccc}
0&0&1\\
0&1&0\\
1&0&0\\
0&0&0\\
1&1&1
\end{array}}\right).
\]
After choosing $v_i$ from the first columns of the above matrices and taking $(1,0,-1,0,0)$ and $(0,1,-1,0,0)$ as a basis of $M_\pi$, we picture $\pi^{-1}(e_i)-v_i$ in Figure \ref{fig:fibers}.

		For 
		$\cS_1$ the tautological subbundle on $\Gr(2,\E_1)$, one computes using \texttt{Schubert2}  that 
\[
	\int c_{6}(\Sym^5 \cS_1^*)=77875.
\]
\vspace{.5cm}
\begin{verbatim}
i1 : loadPackage "Schubert2";
i2 : Z=projectiveBundle 2;
i3 : G=flagBundle({2},OO_Z(-1)+OO_Z(-1)+OO_Z(-1)+OO_Z(-2));
i4 : B=symmetricPower(5,dual (G.SubBundles)_1);
i5 : integral chern(rank B,B)

o5 = 77875
\end{verbatim}
\vspace{.5cm}
Thus, for sufficiently general $X\subset \Bl_{\PP^2}\PP^5$, $V_{\pi_1,1}$ consists of $77875$ isolated points.

	We next consider the Cayley structure $\pi_2$. Then $Z_{\pi_2,2}=\PP^2$ with universal bundle $\E_2=\CO(-1)\oplus\CO(-1)\oplus\CO(-1)$. 
	For 
$\cS_2$ the tautological subbundle on $\Gr(2,\E_2)$, one similarly computes that 
\[
	\int c_{4}(\Sym^3 \cS_2^*)=189.
\]
\vspace{.5cm}
\begin{verbatim}
i1 : loadPackage "Schubert2";
i2 : Z=projectiveBundle 2; 
i3 : G=flagBundle({1}, OO_Z(-1)+OO_Z(-1)+OO_Z(-1));
i4 : B=symmetricPower(3, dual (G. SubBundles)_1);
i5 : integral chern(rank B,B)

o5 = 189
\end{verbatim}
\vspace{.5cm}
Thus, for sufficiently general $X\subset \Bl_{\PP^2}\PP^5$, $V_{\pi_2,1}$ consists of $189$ isolated points.
We conclude that sufficiently general $X$ contains precisely $78064=77875+189$ lines.
\end{ex}

We conclude the paper with one final straightforward yet important example.
\begin{ex}[Products of projective space]\label{ex:final}
	We consider \[Y_\A=\PP^{m_1}\times\cdots\times \PP^{m_q}\] in its Segre embedding. The corresponding set $\A$ consists of those
	\[(u_1,\ldots,u_q)\in \ZZ^{m_1}\oplus \cdots\oplus \ZZ^{m_q}=M\]
	with each $u_i=(u_{i1},\ldots,u_{im_i})$ satisfying $u_{ij}\geq 0$ for all $j$ and $\sum_j u_{ij}\leq 1$. There are exactly $q$ maximal Cayley structures $\pi_i:\A\to \Delta_{\ell_i}$, with $\ell_i=m_i$ and $\pi_i$ the projection to the $i$th factor $\ZZ^{m_i}$ of $M$ (coupled with an identification of the image with $\Delta_{\ell_i}$).

	We consider a complete intersection $X$ of type $\balpha=(\alpha_1,\ldots,\alpha_r)$. Each $\alpha_i\in \Pic(Y_\A)$ is a multidegree 
	\[
		\alpha_i=(\alpha_{i1},\ldots,\alpha_{iq})\in\ZZ^q
	\]
so we may view $\balpha$ as an $r\times q$ matrix. For $\alpha_i$ to be effective, all entries must be non-negative, in which case it is already globally generated. We henceforth assume this is the case.
For the Cayley structure $\pi_j$, the restriction degree of $\alpha_i$ is $\delta_i=\alpha_{ij}$.

For any $k\geq 1$ and Cayley structure $\pi_j$, the expected dimension is 
\[
\phi=	\phi(\A,\pi_j,\balpha,k)=\sum_{i\neq j} m_i+(k+1)(m_j-1)-\sum_{i=1}^r {k+\alpha_{ij} \choose k}.
\]
As long as $V_{\pi_j,k}$ is non-empty, this gives a lower bound on its dimension. For $V_{\pi_j,k}$ non-empty and $X$ general, if $\phi\geq 0$ then $\dim V_{\pi_j,k}=\phi$ (Theorem \ref{thm:expected}).

Assume that all entries of $\balpha$ are greater than zero, $\phi\geq 0$, and $r+2k\leq \sum_i m_i$. Assume further that either some $\alpha_{ij}\geq 3$, there are at least two $\alpha_{ij}$ which are larger than one, or $r+2k\leq m_j$. Then Corollaries \ref{cor:empty} and \ref{cor:xsmooth} apply, and we conclude that $V_{\pi_j,k}$ is non-empty of the expected dimension, and is smooth if $X$ is generic.

For the Cayley structure $\pi_j$, $Z_{\pi_j}$ is just 
\[
	Z_{\pi_j}=\prod_{i\neq j} \PP^{m_i}
\]
and the universal bundle $\E$ (Proposition \ref{prop:bundle}) is  \[\E=\bigoplus_{i=0}^{m_j}\CO_{Z_{\pi_j}}(-1,\ldots,-1).\]
Under our above assumptions, the class of $V_{\pi_j,k}$ in the Grassmann bundle $\Gr(k+1,\E)$ over $Z_{\pi,j}$ is
\begin{equation*}
	\prod_{i=1}^r c_{k+\alpha_{ij}\choose k}(\Sym^{\alpha_{ij}}\cS^*)
\end{equation*}
by Theorem \ref{thm:chern}.

Specializing to $q=2$, $m_1=m_2=2$, $r=1$, $\alpha_1=(3,3)$, and $k=1$, we obtain Example \ref{ex:first} from the introduction and all of the assumptions in this example hold. 
The degree computation of $189$ is exactly the same as the second degree computation in Example \ref{ex:5}.
\end{ex}

\bibliographystyle{alpha}
\bibliography{hypersurfaces} 
\end{document}